\numberwithin{equation}{section}
\newif\ifHy@pdfstring
\def\texorpdfstring{%
\ifHy@pdfstring
\expandafter\@secondoftwo
\else
\expandafter\@firstoftwo
\fi
}
\crefname{empty}{}{}
\newlist{alist}{enumerate}{1}
\setlist[alist]{label=(\alph*),itemsep=.1ex, ref=(\alph*)}
\newlist{rlist}{enumerate}{1}
\setlist[rlist]{label=(\roman*),itemsep=.1ex, ref=(\roman*)}
\newlist{nlist}{enumerate}{1}
\setlist[nlist]{label=(\arabic*),itemsep=.1ex, ref=(\arabic*)}
\crefname{equation}{}{}
\Crefname{figure}{Figure}{Figures}
\crefname{page}{page}{pages}
\Crefname{enumi}{}{}
\Crefname{subsection}{Subsection}{Subsections}
\def\theoremname{Theorem}%
\def\propositionname{Proposition}%
\def\lemmaname{Lemma}%
\def\corollaryname{Corollary}%
\def\definitionname{Definition}%
\def\conventionname{Convention}%
\def\axiomname{Axiom}%
\def\remarkname{Remark}%
\def\examplename{Example}%
\def\questionname{Question}%
\def\constructionname{Construction}%
\def\notationname{Notation}%
\def\assumptionname{Assumption}%
\def\conjecturename{Conjecture}%
\newtheorem{thm}{\theoremname}[section]
\theoremstyle{plain}
\newtheorem{theorem}[thm]{\theoremname}
\newtheorem{proposition}[thm]{\propositionname}
\newtheorem{lemma}[thm]{\lemmaname}
\newtheorem{corollary}[thm]{\corollaryname}
\theoremstyle{definition}
\newtheorem{definition}[thm]{\definitionname}
\newtheorem{remark}[thm]{\remarkname}
\newtheorem{notation}[thm]{\notationname}
\newtheorem{observation}[thm]{Observation}
\providecommand\ordinarycolon{:}
\def\vcentcolon{\mathrel{\mathop\ordinarycolon}}
\newcommand{\coloneqq}{\vcentcolon\mathrel{\mkern-1.2mu}=}
\newcommand{\eqqcolon}{=\mathrel{\mkern-1.2mu}\vcentcolon}
\newcommand{\formatDefinition}[1]{\emph{#1}}
\newcommand{\abs}[1]{\left\lvert#1\right\rvert}
\DeclareMathOperator{\interiorop}{int}
\newcommand{\interior}[1]{\interiorop(#1)}
\newcommand{\norm}[1]{\left\lVert#1\right\rVert}
\newcommand{\field}[1]{\mathbb{#1}}
\DeclareMathOperator{\lip}{lip}
\DeclareMathOperator{\Lip}{Lip}
\let\Part=\part\def\part{\global\Level=0\Part}
\let\Chapter=\chapter\def\chapter{\global\Level=1\Chapter}
\let\Section=\section\def\section{\global\Level=2\Section}
\let\Subsection=\subsection\def\subsection{\global\Level=3\Subsection}
\let\Subsubsection=\subsubsection\def\subsubsection{\global\Level=4\Subsubsection}
\def\levelText{%
\ifcase\the\Level part %
\or chapter %
\or section %
\or subsection %
\or subsubsection %
\else part \fi}
\newcommand{\clopen}[2]{\char91 #1,#2\char41}
\newcommand{\oped}[2]{\char40 #1,#2\char93}
\newcommand{\convFromRight}{{\scriptscriptstyle \searrow}}
\newcommand{\convFromRightB}{{\scriptstyle \searrow}\,}
\begin{document}


\baselineskip=17pt


\title{On sets where $\lip f$ is finite}

\author{Zoltán Buczolich\\
ELTE E\"otv\"os Lor\'and University\\
Pázmány Péter Sétány 1/c\\
1117 Budapest, Hungary\\
www.cs.elte.hu/\hbox{$\sim$}buczo\\
ORCID ID:0000-0001-5481-8797\\
E-mail: buczo@cs.elte.hu
\and
Bruce Hanson\\
Department of Mathematics\\
Statistics and Computer Science\\
St.~Olaf College\\
Northfield\\
Minnesota 55057, USA\\
E-mail: hansonb@stolaf.edu
\and
Martin Rmoutil\\
Department of Mathematics Education\\
Faculty of Mathematics and Physics\\
Charles University\\
Sokolovská 83\\
186 75 Prague 8, Czech Republic\\
E-mail: martin@rmoutil.eu
\and
Thomas Zürcher\\
Instytut Matematyki\\
Uniwersytet Śląski\\
Bankowa 14\\
40-007 Katowice, Poland\\
E-mail: thomas.zurcher@us.edu.pl
}

\date{}

\maketitle


\renewcommand{\thefootnote}{}

\footnote{2010 \emph{Mathematics Subject Classification}: Primary 26A21; Secondary 26A99.\newline \emph{Key words and phrases}: lip, lower local Lipschitz constant.}


\renewcommand{\thefootnote}{\arabic{footnote}}
\setcounter{footnote}{0}


\begin{abstract}
Given a function $f\colon \field{R}\to \field{R}$, the so-called \lq\lq little lip\rq\rq\ function $\lip f$ is defined as follows:
\begin{equation*}
\lip f(x)=\liminf_{r\convFromRight 0}\sup_{\abs{x-y}\le r} \frac{\abs{f(y)-f(x)}}{r}.
\end{equation*}
We show that if $f$ is continuous on $\field{R}$, then the set where $\lip f$ is infinite is a countable union of a countable intersection of closed sets (that is an $F_{\sigma \delta}$ set). On the other hand, given a countable union of closed sets $E$, we construct a continuous function $f$ such that $\lip f$ is infinite exactly on $E$. A further result is that for the typical continuous function $f$ on the real line $\lip f$ vanishes almost everywhere.
\end{abstract}
\section{Introduction}
Throughout this section we will assume that $f$ is a continuous real-valued function that is defined on $\field{R}$. The so-called \lq\lq big Lip\rq\rq\ function, $\Lip f$, is defined as follows:
\begin{equation*}
  \Lip f(x)=\limsup_{y\to x} \frac{\abs{f(y)-f(x)}}{\abs{y-x}}=\limsup_{r\convFromRight 0} \sup_{\abs{x-y}\le r}\frac{\abs{f(y)-f(x)}}r.
\end{equation*}
According to the Rademacher--Stepanov Theorem, \cite{Stepanov}, $f$ is differentiable almost everywhere on the set
\begin{equation*}
L_f=\{x \in \field{R} : \Lip f(x) < \infty\}.
\end{equation*}
More recently, the \lq\lq little lip\rq\rq\ function, $\lip f$, which is defined as follows, has been investigated:
\begin{equation*}
  \lip f(x)=\liminf_{r\convFromRight 0} \sup_{\abs{x-y}\le r}\frac{\abs{f(y)-f(x)}}r.
\end{equation*}
For example, $\lip f$ shows up in J.~Cheeger's seminal paper \cite{Cheeger}, in which he shows that in quite general metric measure spaces a version of Rademacher's theorem holds. It also features prominently in \cite{Keith}, where it makes its appearance as part of a sufficient condition for a version of Rademacher's theorem.
In \cite{BaloghCsornyei} Balogh and Csörnyei show that the Rademacher--Stepanov Theorem does not remain true if we replace $L_f$ with
\begin{equation*}
l_f=\{x \in \field{R} : \lip f(x) < \infty\}.
\end{equation*}
In fact, they produce an example where $\lip f(x) =0$ almost everywhere, but $f$ is nowhere differentiable.  In \cite{Hanson} Hanson, the second author of this paper, sharpens their result to show that the exceptional set where $\lip f(x) \neq 0$ can even be made to have Hausdorff dimension 0.   On the other hand, Balogh and Csörnyei also show that if $\field{R}\backslash l_f$ is countable, then  every interval contains a set of positive measure on which $f$ is differentiable.

Given the relationship between $L_f$ and $l_f$ and the set of  differentiability of~$f$, it is interesting to determine the possible structure of the sets $L_f$ and $l_f$.  It is not difficult to show that a set $E\subset \field{R}$ is equal to $L_f$ for some continuous function~$f$ if and only if $E$ is an $F_\sigma$ set, that is $E$ is a countable union of closed sets (see \cref{Baire classes}~\cref{Capital Lf is Fsigma} and \cref{g delta theorem}).  On the other hand, characterizing the set $l_f$ is more difficult.  Recall that a $G_{\delta\sigma}$ set is a set that can be written as countable union of  countable intersections of open sets. It is straightforward to show that every $l_f$ is a $G_{\delta\sigma}$ set (see \cref{Baire classes}~\cref{lf is Gdeltasigma}), and we conjecture that the converse is also true, namely that every $G_{\delta\sigma}$ set is equal to $l_f$ for some continuous function $f$. Determining the truth of this conjecture appears to be quite difficult.   The main result of this paper is to show that
for every $G_\delta$ set $E$ there is a continuous function $f$ such that
$l_f=E$, and already the proof of this result requires a bit of work.

On a related note, according to a result of Banach \cite{Banach},
for the typical continuous function $f\in C([0,1])$, we have $\Lip f(x)=\infty$ for all $x\in [0,1]$.
In contrast to this result, we show that, somewhat surprisingly, for the typical continuous function $f$  we have  $\lip f$ equal to~$0$ at points of a residual set of full measure.

The structure of the paper is as follows:  In \cref{Notation and Basic Results} we introduce some notation and present a few basic results about $L_f$ and $l_f$.  \cref{The Main Result} contains the proof of the main result that for every $G_\delta$ set $E$ there exists a continuous function $f$ such that $l_f=E$.  We also show that if $E$ is an $F_\sigma$ set, then $E=l_f$ for some continuous function $f$.  Finally, in \cref{Typical Results} we show that the typical continuous function has $\lip$ equal to~$0$ at points of a residual set of full measure.

\section{Notation and Basic Results}\label{Notation and Basic Results}
We denote by $\overline{A}$ the closure of the set $A$.\\
A set $E$ in a complete metric space $X$ is of \formatDefinition{first Baire category}, also called \formatDefinition{meager}, if it is the
union of countably many nowhere dense sets.  We say that a property is \formatDefinition{typical}, also known as \formatDefinition{generic}, in  $X$ if the set of those $x\in X$ that do not have this property is meager.

After introducing some notation concerning the sets where $\lip$ and $\Lip$ are finite and infinite, respectively, we determine their places in the Borel hierarchy.
\begin{definition}[The sets $L_f$, $L_f^\infty$, $l_f$ and $l_f^\infty$]
For a continuous function $f\colon \field{R}\to \field{R}$, we set
\begin{align*}
L_f&=\{x\in \field{R}: \Lip f(x)<\infty\},\\
L_f^\infty&=\{x\in \field{R}: \Lip f(x)=\infty\},\\
l_f&=\{x\in \field{R}: \lip f(x)<\infty\},\\
l_f^\infty&=\{x\in \field{R}: \lip f(x)=\infty\}.
\end{align*}
\end{definition}

\subsection{Baire classes of \texorpdfstring{$l_f$}{l{}f} and \texorpdfstring{$L_f$}{L{}f}}

\begin{definition}[The functions $q_f(x,r)$, $l_f(x,r)$, and $L_f(x,r)$]
We assume that $f\colon I \to \field{R}$ is a function, where $I\subset \field{R}$ is a closed subinterval of $\field{R}$ or $\field{R}$ itself. We let $r$ and $R$ be positive numbers. Then we define the following quantities:
\begin{itemize}
\item $q_f(x,r) = \sup_{y\in [x-r,x+r]\cap I} \frac{|f(y)-f(x)|}{r}$,
\item $l_f(x,R) = \inf_{r\in(0,R)} q_f(x,r)$,
\item $L_f(x,R) = \sup_{r\in(0,R)} q_f(x,r)$.
\end{itemize}
\end{definition}

\begin{observation}\label{O:propertiesofqlL}
Let $f\colon \field{R}\to\field{R}$ be a continuous function. Then the following statements hold.
\begin{alist}
\item\label{qf continuous} Let $r>0$ be fixed. Then $q_f(\cdot, r)$ is a continuous function.
\item\label{upper and lower semicontinuity} With $R>0$ fixed, $l_f(\cdot,R)$ is upper semi-continuous and $L_f(\cdot,R)$ is lower semi-continuous.
\item\label{definition of lip} $\lip f(x) = \lim_{R\convFromRight 0} l_f(x,R) = \lim_{n\to \infty} l_f(x, 1/n) = \sup_{n\in\field{N}} l_f(x, 1/n)$.
\item\label{definition of big Lip} $\Lip f(x) = \lim_{R\convFromRight 0} L_f(x,R) = \lim_{n\to \infty} L_f(x, 1/n) = \inf_{n\in\field{N}} L_f(x, 1/n)$.
\end{alist}
\end{observation}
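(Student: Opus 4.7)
The observation bundles together four closely-related facts, so my plan is to handle part (a) carefully (this is the only piece that uses anything about $f$) and then derive (b)--(d) as formal consequences.

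For (a), fix $r>0$ and $x\in\field{R}$. I would rewrite $q_f(x,r)$ using the fact that $|f(y)-f(x)| = \max\bigl(f(y)-f(x),\; f(x)-f(y)\bigr)$, which yields
\begin{equation*}
r\,q_f(x,r) \;=\; \max\Bigl(M_f(x,r)-f(x),\; f(x)-m_f(x,r)\Bigr),
\end{equation*}
where $M_f(x,r)=\max_{y\in[x-r,x+r]}f(y)$ and $m_f(x,r)=\min_{y\in[x-r,x+r]}f(y)$; these maxima exist since $f$ is continuous on the compact interval $[x-r,x+r]$. Continuity of $q_f(\cdot,r)$ then reduces to continuity of $x\mapsto M_f(x,r)$ and $x\mapsto m_f(x,r)$. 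To prove the latter, pick any bounded neighbourhood $J$ of $x$, on which $f$ is uniformly continuous (restricted to a slightly enlarged compact interval containing all $[x'-r,x'+r]$ for $x'\in J$): given $\varepsilon>0$ choose $\delta>0$ so that $|f(u)-f(v)|<\varepsilon$ whenever $|u-v|<\delta$, and observe that for $|x'-x|<\delta$, every point in $[x'-r,x'+r]$ lies within $\delta$ of a point of $[x-r,x+r]$ and vice-versa. This gives $|M_f(x',r)-M_f(x,r)|\le \varepsilon$ and similarly for $m_f$, proving continuity.

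Part (b) is then immediate from the general principle that an infimum (resp.\ supremum) of a family of continuous functions is upper (resp.\ lower) semi-continuous: by (a) each $q_f(\cdot,r)$ is continuous, and $l_f(\cdot,R)$, $L_f(\cdot,R)$ are defined as infimum and supremum of such functions over $r\in(0,R)$.

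For (c) and (d), the key observation is monotonicity in $R$: as $R$ decreases, the set $(0,R)$ shrinks, so $R\mapsto l_f(x,R)$ is non-decreasing and $R\mapsto L_f(x,R)$ is non-increasing. Hence $\lim_{R\convFromRight 0}l_f(x,R)=\sup_{R>0}l_f(x,R)$ and $\lim_{R\convFromRight 0}L_f(x,R)=\inf_{R>0}L_f(x,R)$, and by monotonicity these limits can be computed along the sequence $R=1/n$. Finally, unwrapping the definitions,
\begin{equation*}
\liminf_{r\convFromRight 0}q_f(x,r)\;=\;\lim_{R\convFromRight 0}\inf_{r\in(0,R)}q_f(x,r)\;=\;\lim_{R\convFromRight 0}l_f(x,R),
\end{equation*}
which by the definition in the abstract equals $\lip f(x)$; the analogous identity with $\limsup$, $\sup$, and $L_f$ gives $\Lip f(x)$.

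The only genuine work lies in part (a); once the joint continuity issue is resolved via uniform continuity of $f$ on compact sets, everything else is bookkeeping with monotone limits and semi-continuity. I expect no real obstacle, but the cleanest way to present (a) is via the $M_f,m_f$ reformulation above, which avoids having to estimate $\sup|f(y)-f(x)|$ directly while both the reference point $f(x)$ and the domain of the sup move simultaneously.
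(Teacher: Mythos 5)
Your proposal is correct and takes essentially the same approach as the paper, which simply states that parts (a), (c), (d) are trivial from the definitions and that (b) follows from (a) via closure of semi-continuity under infima/suprema; your argument fills in the details that the paper leaves to the reader, in particular the uniform-continuity reasoning behind (a). (One tiny slip of phrasing: you write that ``$R\mapsto l_f(x,R)$ is non-decreasing'' whereas as a function of $R$ it is non-increasing; it is clear from context you mean it is non-decreasing as $R\searrow 0$, which is what you use.)
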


\begin{proof}
The proofs of
\cref{qf continuous,definition of lip,definition of big Lip}
  are trivial from the definitions. The proof of \cref{upper and lower semicontinuity} uses \cref{qf continuous} and the fact that upper semi-continuous functions are closed under taking infima, and lower semi-continuous functions are closed under taking suprema.
\end{proof}

\begin{lemma}\label{Baire classes}
Let $f\colon \field{R}\to \field{R}$ be a continuous function. In this case, the following statements are valid.
\begin{alist}
\item\label{lf is Gdeltasigma} The set $l_f$ is a $G_{\delta\sigma}$ set.
\item\label{Capital Lf is Fsigma} The set $L_f$ is an $F_\sigma$ set.
\item\label{lip zero is Gdelta} The set $\{x:\lip f(x)=0\}$ is a $G_\delta$ set.
\end{alist}
\end{lemma}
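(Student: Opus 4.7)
The proof of all three parts will hinge on the decomposition of $\lip f$ and $\Lip f$ as monotone limits of the one-parameter quantities $l_f(\cdot,1/n)$ and $L_f(\cdot,1/n)$, combined with the semi-continuity provided by \cref{O:propertiesofqlL}~\cref{upper and lower semicontinuity}. Throughout, I will silently use that $l_f(x,1/n)$ is non-decreasing in $n$ and $L_f(x,1/n)$ is non-increasing in $n$ (since shrinking $R$ shrinks the range $(0,R)$ over which we take $\inf$ or $\sup$), so that the suprema and infima in \cref{O:propertiesofqlL}~\cref{definition of lip,definition of big Lip} are in fact monotone limits.

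For \cref{Capital Lf is Fsigma}, the plan is to use the monotonicity of $L_f(x,1/n)$ in $n$: since $\Lip f(x)$ is the decreasing limit of $L_f(x,1/n)$, we have $\Lip f(x)<\infty$ iff $L_f(x,1/n)\le M$ for \emph{some} $n$ and some $M\in\mathbb{N}$. Hence
\begin{equation*}
L_f=\bigcup_{M,n\in\mathbb{N}}\{x:L_f(x,1/n)\le M\}.
\end{equation*}
Lower semi-continuity of $L_f(\cdot,1/n)$ (\cref{O:propertiesofqlL}~\cref{upper and lower semicontinuity}) makes each set in the union closed, so $L_f$ is $F_\sigma$.

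For \cref{lf is Gdeltasigma}, the plan is dual but a little more delicate because upper semi-continuous functions only give \emph{open} strict sub-level sets, not closed weak sub-level sets. Since $\lip f=\sup_n l_f(\cdot,1/n)$, I have $\{\lip f\le M\}=\bigcap_n\{l_f(\cdot,1/n)\le M\}$; I then rewrite each factor as $\bigcap_{k\ge 1}\{l_f(\cdot,1/n)<M+1/k\}$, which is a countable intersection of open sets (using \cref{O:propertiesofqlL}~\cref{upper and lower semicontinuity}), hence $G_\delta$. A countable intersection over $n$ and $k$ of open sets is still $G_\delta$, so $\{\lip f\le M\}$ is $G_\delta$; taking the countable union over $M\in\mathbb{N}$ gives $l_f=\bigcup_M\{\lip f\le M\}$ as a $G_{\delta\sigma}$ set.

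For \cref{lip zero is Gdelta}, since $\lip f\ge 0$ automatically, I just observe that $\{x:\lip f(x)=0\}=\{x:\lip f(x)\le 0\}$, and the argument for part \cref{lf is Gdeltasigma} applied with $M=0$ shows this set is $G_\delta$. The only step that required any thought is the one in part \cref{lf is Gdeltasigma} where upper semi-continuity has to be converted to a $G_\delta$ description of a weak sub-level set via the $M+1/k$ trick; everything else is a direct bookkeeping of monotone limits and semi-continuity.
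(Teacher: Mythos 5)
Your proof is correct and follows essentially the same route as the paper's, namely Borel-class bookkeeping based on \cref{O:propertiesofqlL}. The paper unrolls to $q_f$ and its continuity, writing $l_f^\infty=\bigcap_{k}\bigcup_{n}\bigcap_{r\in(0,1/n)}\{q_f(\cdot,r)\ge k\}$ as an $F_{\sigma\delta}$ set, whereas you stay one level up and use the semi-continuity of $l_f(\cdot,1/n)$ and $L_f(\cdot,1/n)$ directly (with the $M+1/k$ device converting weak sub-level sets to $G_\delta$); both are valid and the difference is only in packaging.
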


\begin{proof}
To prove \cref{lf is Gdeltasigma}, we note that:
\begin{equation*}
\begin{aligned}
\lip f(x)=\infty \quad \Longleftrightarrow & \quad \forall k\in\field{N} : \quad \lip f(x)> k  \\
 \Longleftrightarrow & \quad \forall k\in\field{N} \quad \exists n\in \field{N} : \quad l_f\biggl(x,\frac{1}{n}\biggr) \geq k \\
 \Longleftrightarrow & \quad \forall k\in\field{N} \quad \exists n\in \field{N} \quad \forall r\in\biggl(0,\frac{1}{n}\biggr) : \quad q_f(x,r)\geq k \\
 \Longleftrightarrow & \quad x\in \; \bigcap_{k\in\field{N}} \; \bigcup_{n\in\field{N}} \; \bigcap_{r\in (0,1/n)} \left \{ z\in\field{R}: q_f(z,r) \geq k \right\}.
\end{aligned}
\end{equation*}

It now follows from the continuity of $q_f$ that $l_f^\infty$ is an $F_{\sigma\delta}$ set, establishing~\cref{lf is Gdeltasigma}.  The proofs of \cref{Capital Lf is Fsigma,lip zero is Gdelta} are similar and left to the reader.
\end{proof}

\section{The Main Result}\label{The Main Result}
\begin{theorem}
\label{F sigma}
Let $F\subset \field{R}$ be an $F_\sigma$ set. Then there exists a continuous function $f\colon \field{R}\to \field{R}$ such that $l_f^\infty=F$.
\end{theorem}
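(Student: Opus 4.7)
The plan is to write $F=\bigcup_{n\ge 1}F_n$ as an increasing union of closed sets (with $F_1=\emptyset$) and to build $f$ as a uniformly convergent sum $f=\sum_n g_n$ of continuous, piecewise-linear functions. Each $g_n$ will be engineered to charge the points of $F_n$ with a large local oscillation while remaining controllably small away from $F_n$. Concretely, I will choose rapidly decreasing scales $\alpha_n,\beta_n>0$ with $\alpha_n/\beta_n\to\infty$ and $\sum_n \alpha_n<\infty$, along with a width $\delta_n\ll\beta_n$; let $U_n$ be an open neighborhood of $F_n$ of thickness $\delta_n$, and define $g_n$ on $U_n$ to be a piecewise-linear saw-tooth of amplitude $\alpha_n$ and period $\beta_n$, tapering to zero at $\partial U_n$, and set $g_n=0$ outside $\overline{U_n}$. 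Then $\norm{g_n}_\infty\le\alpha_n$, so $f\coloneqq\sum g_n$ is a well-defined continuous function on $\field{R}$.

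For $x\in F$, pick the minimal $n_0$ with $x\in F_{n_0}$; then $x\in F_n$ for every $n\ge n_0$. Because the teeth of $g_n$ are placed densely at scale $\beta_n$ along a small neighborhood of $F_n$, I will show that $q_{g_n}(x,\beta_n)\gtrsim \alpha_n/\beta_n$ for every $n\ge n_0$. With the scales $(\alpha_n,\beta_n)$ separated geometrically, the contributions of the other $g_m$ ($m\ne n$) to $q_f(x,\beta_n)$ will be of lower order, so $q_f(x,\beta_n)\to\infty$; a ``plateau'' argument then extends this to all $r$ in an interval around each $\beta_n$, forcing $\lip f(x)=\infty$. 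For $x\notin F$, one has $d(x,F_n)>0$ for every $n$; by choosing the widths $\delta_n$ to decay sufficiently fast, $g_n$ vanishes in a neighborhood of $x$ once $n$ is large enough compared to $1/d(x,F_n)$, and along a carefully chosen sequence $r_k\searrow 0$ the total $q_f(x,r_k)$ will remain bounded, yielding $\lip f(x)<\infty$.

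The main obstacle is balancing these opposing requirements. It is most acute (i) at interior points of $F$, where every complementary interval of $F_n$ is locally far from $x$ and the distance-based cutoff degenerates, and (ii) at points $x\notin F$ that accumulate on $F$, where $d(x,F_n)\searrow 0$ so that the supports of the $g_n$ pile up at $x$. Case (i) I plan to handle by adding independent oscillators supported inside $\interior{F}$ — itself an open and hence $F_\sigma$ set that admits an analogous construction; case (ii) will require a delicate inductive choice of the sequences $\alpha_n,\beta_n,\delta_n$, chosen so that every $x\notin F$ admits a subsequence of ``good scales'' $r_k$ witnessing boundedness of $q_f(x,r_k)$. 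This careful scale-balancing and geometric placement of the teeth is where the bulk of the work of the proof is expected to lie.
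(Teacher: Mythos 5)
Your approach is genuinely different from the paper's. The paper does not attempt a single superposition of sawteeth over the sets $F_n$: instead it builds $f$ hierarchically, first for a countable set (via ``acceptable sequences'' and nested parallelogram envelopes controlling $q_f$), then for a nowhere dense perfect set (via a dyadic ``splitting'' construction), then for countable unions of perfect sets, and then passes to the general meager $F_\sigma$ case by a Cantor--Bendixson decomposition ($F = \bigcup P_n \cup D$ with $D$ countable and separated from the perfect part), handling the interior by a separate open-set lemma. This layering is not decoration; it is precisely what lets the paper control the $\liminf$ at every point of $\field{R}\setminus F$.

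The gap in your sketch is the treatment of $x\notin F$ that accumulate on $F$. You argue that if $\delta_n$ decays ``sufficiently fast'' then for $x\notin F$ the functions $g_n$ vanish near $x$ once $n$ is large. But $\delta_n$ is fixed in advance while $d(x,F_n)>0$ depends on $x$ and can decay arbitrarily slowly compared to any prescribed $\delta_n$. Concretely, the set of $x$ with $d(x,F_n)<\delta_n$ for infinitely many $n$ is $\limsup_n U_n$, a $G_\delta$ set which contains $F$ but is in general strictly larger: if $F$ is $F_\sigma$ but not $G_\delta$ (e.g.\ $F=\field{Q}$), then $\limsup_n U_n\supsetneq F$ no matter how $\delta_n$ is chosen. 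For $x\in (\limsup_n U_n)\setminus F$, infinitely many $g_n$ are nonzero arbitrarily close to $x$, and the claim ``$g_n$ vanishes in a neighborhood of $x$ once $n$ is large'' is simply false. Everything then hinges on the unproven assertion that a subsequence of good scales $r_k$ can be found with $q_f(x,r_k)$ bounded — but this is exactly where the theorem's difficulty lives, and it is not a corollary of the framework; it is the framework. (There is also a smaller internal inconsistency: with $\delta_n\ll\beta_n$, a neighborhood of thickness $\delta_n$ cannot contain a tooth of period $\beta_n$, so as written $g_n$ is undefined near isolated points of $F_n$.)

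A second, subtler issue is the $x\in F$ direction: because $\lip$ is a $\liminf$, you must show $q_f(x,r)\to\infty$ for \emph{every} $r\convFromRight 0$, not along $r=\beta_n$. When the scales $\beta_n$ are geometrically separated, the lower bound coming from the tooth of $g_n$ competes, at scales $r$ near $\beta_{n+1}$, with an error term of order $s_{n+1}\gg s_n$ from the next-finer sawtooth, which can be of either sign. The ``plateau argument'' is not a routine extension and needs to be carried out; the paper avoids this difficulty altogether via the parallelogram-envelope device and \cref{parallelogram estimate}, which give monotone, uniformly controlled bounds on $q_f$ down every nested chain of scales.
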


The proof of \cref{F sigma} is rather involved and will be accomplished in a sequence of steps.  We start with supposing that $F$ is countable. In the second step, we handle a nowhere dense perfect set, which prepares us for the case where $F$ is the countable union of nowhere dense perfect sets. Having established this case, we next look at the case where the sets are nowhere dense and merely closed.  In the final step, we consider the general case: a countable union of closed sets.

\subsection{The countable case}

\begin{theorem}\label{countable theorem}
Given any countable set of points $S$ in $\field{R}$, there exists a continuous, increasing function $f\colon\field{R}\to \field{R}$ such that
\begin{equation}\label{l_f^infty=S}
l_f^\infty=S
\end{equation}
and
\begin{equation}\label{Lip f finite off of S}
\Lip f(x)< \infty \text{ for all }x \notin \overline{S}.
\end{equation}
\end{theorem}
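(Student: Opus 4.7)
The plan is to construct $f$ as the identity plus a uniformly convergent series of cusp-like bump functions, one per point of $S$. Enumerate $S = \{s_1, s_2, \ldots\}$ and fix once and for all a continuous non-decreasing profile $\phi\colon\field{R}\to[0,1]$ with $\phi\equiv 0$ on $(-\infty,-1]$, $\phi\equiv 1$ on $[1,\infty)$, and $\phi(t)-\phi(0)\sim \tfrac{1}{2}\operatorname{sign}(t)\sqrt{|t|}$ as $t\to 0$. With summable $c_k>0$ and small widths $r_k>0$ (both tending to $0$), set
\begin{equation*}
  g_k(x)=c_k\,\phi\!\left(\frac{x-s_k}{r_k}\right),\qquad f(x)=x+\sum_{k=1}^\infty g_k(x).
\end{equation*}
Uniform convergence gives continuity of $f$, and since each summand is non-decreasing while the first one is strictly increasing, $f$ is strictly increasing.

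Two of the three required conclusions come easily. Since every $g_j$ is non-decreasing, $q_f(s_k,r)\ge q_{g_k}(s_k,r)$, and the square-root cusp of $\phi$ at $0$ makes $q_{g_k}(s_k,r)\sim c_k/(2\sqrt{r\cdot r_k})\to\infty$ as $r\to 0^+$; hence $\lip f(s_k)=\infty$ for every $k$, yielding $S\subseteq l_f^\infty$. For $x\notin\overline S$, set $d:=\mathrm{dist}(x,\overline S)>0$; because $r_k\to 0$, only finitely many $g_k$ can be non-constant on $(x-d/2,x+d/2)$, and this finite sum is smooth at $x$. Thus $\Lip f(x)<\infty$, and a fortiori $\lip f(x)<\infty$.

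The remaining case, and the crux, is to verify $\lip f(x)<\infty$ for every $x\in\overline S\setminus S$, where $x$ is accumulated by elements of $S$ possibly arbitrarily fast (Liouville-type behaviour is possible when $S$ is dense). Monotonicity yields the crude upper bound
\begin{equation*}
  f(x+r)-f(x-r)\;\le\;2r\;+\!\!\sum_{k\,:\,|x-s_k|<r+r_k}\!\! c_k,
\end{equation*}
so $\lip f(x)<\infty$ follows once we exhibit a sequence $r_n\downarrow 0$ along which the right-hand sum is $O(r_n)$. The plan is to test at scales $r_n=r_{K_n}:=\tfrac{1}{2}\min_{k\le K_n}|x-s_k|$ for a suitable subsequence $K_n\to\infty$: provided the widths $r_k$ are small enough, the first $K_n$ bumps do not contribute at scale $r_{K_n}$, leaving only the tail $\sum_{k>K_n}c_k$.

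The main obstacle will be to choose the enumeration of $S$ together with the parameters $(c_k,r_k)$ so that $\sum_{k>K}c_k$ is $O(r_K)$ along such a subsequence, \emph{uniformly over} $x\in\overline S\setminus S$. A workable candidate is $c_k\le\min\bigl(2^{-k},\tfrac{1}{2}\min_{j<k}|s_k-s_j|\bigr)$ with $r_k\ll c_k^2$, coupled with an enumeration in which points capable of very fast approximation to a given $x$ appear only at correspondingly large indices. With such a setup, for any accumulation point $x\in\overline S\setminus S$ the indices $k_n$ at which successive new-nearest neighbours of $x$ appear in the enumeration grow fast enough that $\sum_{k>k_n}c_k\lesssim|x-s_{k_n}|$, delivering the desired bound on the test subsequence. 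Carrying out this combinatorial matching is the genuinely technical step of the proof; everything else reduces to the verifications sketched above.
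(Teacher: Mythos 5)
Your construction handles two of the three parts cleanly: the monotonicity argument gives $q_f(s_k,r)\ge q_{g_k}(s_k,r)\to\infty$, so $S\subseteq l_f^\infty$, and the finite-sum/locality argument gives $\Lip f<\infty$ off $\overline S$. The genuine gap is precisely the case you flag as "the genuinely technical step": showing $\lip f(x)<\infty$ for every $x\in\overline S\setminus S$. That step is not a routine verification that can be deferred; it is where the difficulty of the theorem lives, and your sketch does not survive scrutiny as written.

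Two concrete problems. First, the crude bound $f(x+r)-f(x-r)\le 2r+\sum_{k:|x-s_k|<r+r_k}c_k$ is too weak: if $|x-s_k|<r_k$ for infinitely many $k$ (which happens whenever $S$ clusters at $x$ at a rate faster than the widths $r_k$ decay — e.g.\ $s_k=x+(-1)^k10^{-k!}$), then the index set on the right is all of $\field{N}$ for every $r>0$, so the sum is a fixed positive constant and the bound on $q_f(x,r)$ is $\gtrsim 1/r\to\infty$. The plan of "choose $K_n$ so only the tail contributes" cannot get started because the early bumps never drop out. Second, the square-root cusp is not locally Lipschitz near its tip: $g_k'(y)\sim c_k/\sqrt{r_k\,|y-s_k|}$, so when $|x-s_k|\ll r_k$ the slope contributed by $g_k$ near $x$ is $\sim c_k/\sqrt{r_k\,|x-s_k|}$, and this persists for all $r\lesssim|x-s_k|$. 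With your suggested $r_k\ll c_k^2$ this makes the contribution of a single nearby bump unboundedly large on a whole range of scales; with $r_k\sim c_k$ the "bad range" of scales for $g_k$ is $\bigl(|x-s_k|,\,c_k\bigr)$, and in the example above these ranges for consecutive $k$ overlap and tile $(0,\varepsilon)$, so there is no test scale left. Crucially, the parameters $(c_k,r_k)$ and the enumeration must be fixed \emph{before} $x$ is chosen and must work for every $x\in\overline S\setminus S$ simultaneously; the proposal's appeal to an "enumeration in which points capable of very fast approximation to a given $x$ appear only at correspondingly large indices" is not realizable when $S$ is dense, since $\overline S\setminus S$ is then uncountable and a single enumeration cannot delay the close approximants of all such $x$ at once.

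The paper's proof avoids these issues by a different mechanism: instead of superposing independent bumps, it builds $f$ as a limit of piecewise-linear homeomorphisms whose graphs are trapped in a nested family of parallelograms, and the key estimate (the parallelogram lemma together with the direct-descendant inequality) guarantees a uniform bound $q_f(x,r_n)\le q_{Q_N}$ at a sequence of scales $r_n\to 0$ for every $x\notin S$, with no dependence on how fast $S$ approaches $x$. This built-in multiscale control is exactly what your tail-sum estimate lacks. If you want to salvage a "sum of bumps" construction you would at minimum need to replace the cusp profile by one whose local slope is uniformly controlled at every scale above the scale at which $\lip$ becomes infinite, and to make the quantitative matching between $(c_k,r_k)$ and the geometry of $S$ explicit and uniform over $x$; as stated, the proposal is incomplete and, in the dense case, likely unfixable in its current form.
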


If $S$ is a finite set, it is trivial to verify the \lcnamecref{countable theorem}, so we will assume throughout this section that $S$ is countably infinite, and we write henceforth $S=\{s_1,s_2,\ldots\}$. We also assume without loss of generality that $S \subset (0,1)$.

We begin with a few definitions.
\begin{definition}[acceptable for]\label{acceptable}
Given $x_0$ in $(0,1)$, we say that the sequence $\{x_n\}\subset (0,1)$ is \formatDefinition{acceptable for $x_0$} if
\begin{gather}\label{not in S}
x_n \notin S \mbox{ for } n=2,3,\ldots,\\
x_n \convFromRightB x_0\label{decreasing to x},\\
\intertext{and}
x_{n+1}-x_{n+2}<\frac13(x_n-x_{n+1}) \ \text{for all $n \in \field{N}$.}\label{geometric decay}
\end{gather}
\end{definition}

We note that given any $0<x_0<y<1$, we can easily choose a sequence $\{x_n\}$ that is acceptable for $x_0$ such that $x_1=y$.

\begin{definition}[The function $f_{\{x_n\}}$]\label{f_{x_n}}
Given a sequence $\{x_n\}$ that is acceptable for $x_0$, we define
$f=f_{\{x_n\}}$ on $[x_0,x_1]$ as follows:
\begin{gather}\label{acceptable_0}
f(x_0)=0,\\
f(x_n)=2^{-n+1}(x_1-x_0) \ \text{for all $n \in \field{N}$}\label{acceptable_1},\\
f \mbox{ is linear on each }I_n=[x_{n+1},x_n]\mbox{ for all $n\in\field{N}$}.\label{acceptable_2}
\end{gather}
\end{definition}

\begin{remark}\label{m_n goes to infinity}
Note that by \cref{acceptable_1} the slope~$m_n$ of $f$ on $I_n$ satisfies the equation $m_n=\frac{x_1-x_0}{2^n(x_n-x_{n+1})}$.

On the other hand, \cref{geometric decay} guarantees that\footnote{For $n=1$, the inequality follows from the fact that the sequence of the $x_n$ decreases to~$x_0$.} $x_n-x_{n+1}<\frac{x_1-x_0}{3^{n-1}}$, so we get $m_n>\frac{3^{n-1}}{2^n}\to \infty$, and therefore $\lip f(x_0)=\infty$.

\end{remark}

\begin{definition}[acceptable on]\label{g_acceptable}
Suppose that $g$ is defined on $[0,1]$ and $\{x_n\}$ is acceptable for $x_0 \in (0,1)$.  Then we say that $g$ is \formatDefinition{acceptable on $[x_0,x_1]$} if there is a constant $c$ such that $g=f_{\{x_n\}}+c$ on $[x_0,x_1]$.
\end{definition}
\begin{definition}[parallelogram $P_{L,\varepsilon}$ and $q_P$]\label{parallelogram}
Suppose that $0<\varepsilon<1$ and $L$ is a finite, closed line segment in $\field{R}^2$ with positive slope~$m$.  Let $A$ and $B$ denote the endpoints of $L$ and define $P=P_{L,\varepsilon}$ to be the closed parallelogram with $L$ as one of its diagonals and the boundary of $P$ made up of the line segments with $A$ and $B$ as endpoints and slopes $(1+\varepsilon)m$ and $(1-\varepsilon)m$.  We call $L$ the \formatDefinition{main diagonal} of $P$.  The situation is schematically represented in \cref{parallelogramFigure}.
We also define $q_P=(1+3\varepsilon)m$.
\end{definition}
\begin{figure}
\centering
\includegraphics{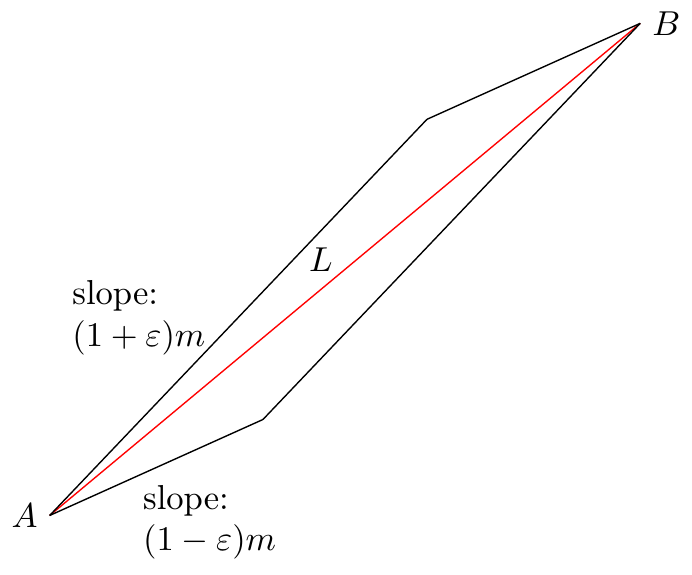}
\caption{Parallelogram as considered in \cref{parallelogram}}\label{parallelogramFigure}
\end{figure}

\begin{definition}[direct descendant]\label{direct descendant}
Suppose that $0<\delta, \varepsilon<1$, and $L$ is a finite, closed line segment in $\field{R}^2$ with positive slope,  moreover $P=P_{L,\varepsilon}$ and $Q=P_{M,\delta}$
are parallelograms with $M$ being a closed line segment sharing an endpoint with $L$ and $Q \subset P$.  Then we say that \formatDefinition{$Q$ is a direct descendant of $P$.}
\end{definition}

\begin{remark}\label{direct descendant inequality}
Note that if $Q$ is a direct descendant of $P$, then $\delta \leq \varepsilon$ and
$q_Q  \le q_P$.
\end{remark}

We next state a simple \lcnamecref{parallelogram estimate}, which will be useful in the proof of the existence of the function described in \cref{countable theorem}.  In the \lcnamecref{parallelogram estimate} we use the following notation:  Given two points $A,B \in \field{R}^2$, we define $[A,B]$ to be the closed line segment with $A$ and $B$ as the endpoints.  We leave the proof of the \lcnamecref{parallelogram estimate}, which is straightforward, to the reader.

\begin{lemma}\label{parallelogram estimate} Let $f\colon[a,b]\to \field{R}$ be continuous with $\frac{f(b)-f(a)}{b-a}=m>0$ and define the parallelogram $P$ as $P_{L,\varepsilon}$, where $L=[(a,f(a)),(b,f(b))]$ and $\varepsilon \le 1/2$.  Suppose that the graph of~$f$ is contained in~$P$.  Then for any $x \in (a,b)$ and $r=\min\{x-a,b-x\}$ we have
\begin{equation}\label{q estimate}
q_f(x,r)\leq q_P.
\end{equation}
\end{lemma}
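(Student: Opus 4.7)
The plan is to decompose $f$ into the affine part along the main diagonal $L$ plus a vertical deviation controlled by the parallelogram. Write $\ell(t) := f(a) + m(t-a)$ and $h(t) := f(t) - \ell(t)$, so that $h(a) = h(b) = 0$ and
\begin{equation*}
|f(y) - f(x)| \le m|y-x| + |h(x)| + |h(y)|
\end{equation*}
by the triangle inequality.

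The first step is a short geometric computation for $P$: at each $t \in [a,b]$, the vertical cross-section of $P$ is an interval centered at $\ell(t)$ with half-width $\varepsilon m \min\{t-a,b-t\}$. Indeed, the two sides of $P$ emanating from $A$ (with slopes $(1\pm\varepsilon)m$) form the upper and lower envelopes on $[a,(a+b)/2]$, and the two sides emanating from $B$ form them on $[(a+b)/2,b]$; in either case they lie at vertical distance $\varepsilon m\min\{t-a,b-t\}$ from $\ell(t)$. Since the graph of $f$ sits in $P$, this gives $|h(t)| \le \varepsilon m\min\{t-a,b-t\}$ for every $t\in[a,b]$.

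Now fix $x\in(a,b)$ with $r = \min\{x-a,b-x\}$; then $[x-r,x+r]\subseteq[a,b]$ and the bound above immediately yields $|h(x)|\le \varepsilon m r$. For $y\in[x-r,x+r]$, I would argue by symmetry that $r = x-a$ (so $x\le (a+b)/2$ and $y\in[a,2x-a]$) and then establish the key inequality $\min\{y-a,b-y\}\le 2r$. A short case split suffices: if $y\le (a+b)/2$ then $y-a\le 2(x-a)=2r$ directly; if $y>(a+b)/2$, the constraint $|y-x|\le x-a$ forces $x > (3a+b)/4$, which yields $b-y \le (b-a)/2 < 2(x-a) = 2r$. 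Hence $|h(y)|\le 2\varepsilon m r$, and combining all estimates,
\begin{equation*}
|f(y) - f(x)| \le mr + \varepsilon m r + 2\varepsilon m r = (1+3\varepsilon)\,mr = q_P\, r,
\end{equation*}
which is exactly $q_f(x,r)\le q_P$.

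The only step requiring any real care is the bound on $|h(y)|$ when $y$ lies past the midpoint $(a+b)/2$: geometrically $y$ can then be far from $a$, but the constraint $|y-x|\le r = x-a$ forces $x$ itself to be close to the midpoint, which in turn keeps $b-y$ small enough. Everything else is mechanical once the cross-sectional description of $P$ is in hand, which is why the authors can leave it to the reader.
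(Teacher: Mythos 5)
Your proof is correct. The paper leaves this lemma to the reader, so there is no proof to compare against, but your decomposition into the affine part $\ell$ plus a deviation $h$ bounded by the vertical cross-section of $P$ is the natural route, and each step checks out: the cross-section half-width $\varepsilon m \min\{t-a,b-t\}$ is exactly right (the two non-$A$, non-$B$ vertices of $P$ lie above and below the midpoint of $L$, and each half of $P$ is bounded by the pair of sides through $A$ or through $B$), giving $|h(x)| \le \varepsilon m r$ and $|h(y)| \le 2\varepsilon m r$, and the triangle inequality then delivers $(1+3\varepsilon)mr$, which is precisely $q_P\,r$.

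One small simplification worth noting: the case split used to show $\min\{y-a,b-y\}\le 2r$ is unnecessary. With the normalization $r=x-a$, you already have $y-a\le (x+r)-a = 2r$ for every $y\in[x-r,x+r]$, and since $\min\{y-a,b-y\}\le y-a$ trivially, the bound follows immediately; there is no need to examine whether $y$ lies past the midpoint. Also, the hypothesis $\varepsilon\le 1/2$ plays no role in your argument — and indeed the lemma is true for any $\varepsilon\in(0,1)$; the paper carries that constraint for use elsewhere (the fundamental envelope construction), not because this lemma needs it.
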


\begin{definition}[line segment~$L_I$]\label{L_I}
If $f$ is linear on the interval $I=[a,b]$, then we define ${L_I=[(a,f(a)),(b,f(b))]}$.
\end{definition}

\begin{definition}[fundamental pair, fundamental envelope]\label{fundamental}
Suppose that $f$ is an increasing homeomorphism of $[0,1]$ onto itself and ${\mathcal I}=\{I_n\}$ is a countable collection of non-overlapping intervals that are closed and such that $\bigcup I_n =[0,1]$, $f$ is linear on each $I_n=[a_n,b_n]$ and each $a_n \notin S$.  Then we say that $(f,{\mathcal I})$ is a \formatDefinition{fundamental pair}.

  Finally, given a fundamental pair $(f,{\mathcal I})$ with ${\mathcal I}=\{I_n\}$ and $L_n=L_{I_n}$ and a sequence $\{\varepsilon_n\}$ where $0<\varepsilon_n\le 1/2$ for all $n \in \field{N}$,  we define
\begin{equation*}
P_{f,{\mathcal I},\{\varepsilon_n\}}=\bigcup_{n=1}^\infty P_{L_n,\varepsilon_n}
\end{equation*}
and call $P_{f,{\mathcal I},\{\varepsilon_n\}}$ a \formatDefinition{fundamental envelope}  of $(f,{\mathcal I})$.
Thus,  $P_{f,{\mathcal I},\{\varepsilon_n\}}$ is a union of non-overlapping, closed parallelograms, which cover the graph of $f$.  Note that we require that $\varepsilon_n \le 1/2$ for all $n \in \field{N}$.
\end{definition}

\begin{definition}[successor]\label{successor}
Suppose that $(f,{\mathcal I})$ is a fundamental pair with ${\mathcal I}=\{I_n\}=\{[a_n,b_n]\}$.  We say that the fundamental pair $(g,{\mathcal J})$ is a \formatDefinition{successor} to $(f,{\mathcal I})$ if there is $m \in \field{N}$ such that $g=f$ on $[0,a_m]\cup [b_m,1]$ and there are $\beta_1,\beta_2 \in (a_m,b_m)$ such that $[a_m,\beta_1],[\beta_2,b_m]\in {\mathcal J}$ and $\{[a_n,b_n]\}_{n\neq m}\subset {\mathcal J}$.
\end{definition}

\begin{lemma}\label{fundamental lemma}
Suppose that $(f,{\mathcal I})$ is a fundamental pair with ${{\mathcal I}=\{[a_n,b_n]\}}$ and fundamental  envelope $P=P_{f,{\mathcal I},\{\varepsilon_n\}}$.  Let $\alpha>0$ and suppose that the point ${x_0\in (0,1)\backslash(\bigcup_{k=1}^\infty\{a_k,b_k\})}$,
 that is $x_0 \in (a_m,b_m)$ for some $m \in \field{N}$.  Then we can find a positive~$\delta$ with $[x_0-\delta,x_0+2\delta]\subset (a_m,b_m)$, a sequence $x_1=x_0+\delta,x_2,x_3,\ldots$  that is acceptable for $x_0$, and a fundamental pair $(g,{\mathcal J})$ with envelope $Q$ satisfying:
\begin{gather}
\label{g agrees f}
g=f \mbox{ on } [0,a_m] \cup [b_m,1],\\
\label{slope 1}
g \mbox{ is linear with slope } 1 \mbox { on }
[x_0-\delta,x_0] \mbox{ and on } [x_0+\delta,x_0+2\delta],\\
\label{acceptable g}
g \mbox{ is acceptable on }[x_0,x_1],\\
\label{alpha bound}
|g-f|< \alpha \mbox{ on } [0,1],\\
\label{J and I}
{\mathcal J}=\{I_n\}_{n \neq m} \cup \{[a_m,x_0-\delta],[x_0-\delta,x_0],[x_0+\delta,x_0+2\delta],\\ \nonumber [x_0+2\delta,b_m]\}\cup
 \{[x_{n+1},x_n]\colon n\in\field{N}\},\\
\label{Q subset P}
Q \subset P,\\
\text{ recalling that }S \text{ is the exceptional set from Theorem~\ref{countable theorem}
and also }\nonumber
\\
\text{appears in Definition~\ref{fundamental} we also have}\nonumber
\\
S\cap\left(\{x_0-\delta,x_0+\delta,x_0+2\delta\}\cup \{x_n: n\in \field{N}\}\right)=\emptyset.
\end{gather}
\end{lemma}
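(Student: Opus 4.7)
My approach is to construct $g$ as a local modification of $f$ inside $I_m=[a_m,b_m]$: keep $g=f$ on $[0,a_m]\cup[b_m,1]$ and replace $f$ on a short sub-interval $[x_0-\delta, x_0+2\delta]\subset(a_m,b_m)$ by a piecewise linear function. Concretely, with $\delta>0$ small (conditions collected below) and an acceptable sequence $\{x_n\}_{n\ge 1}$ for $x_0$ with $x_1=x_0+\delta$, set $g:=f_{\{x_n\}}+f(x_0)$ on $[x_0,x_1]$; make $g$ linear of slope~$1$ on $[x_0-\delta,x_0]$ and on $[x_0+\delta,x_0+2\delta]$, which forces $g(x_0-\delta)=f(x_0)-\delta$ and $g(x_0+2\delta)=f(x_0)+2\delta$; and extend $g$ linearly on $[a_m,x_0-\delta]$ and on $[x_0+2\delta,b_m]$ so that $g(a_m)=f(a_m)$ and $g(b_m)=f(b_m)$. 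By construction, this immediately yields \cref{g agrees f,slope 1,acceptable g}, and $\mathcal{J}$ becomes the collection listed in \cref{J and I}.

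Writing $s_m>0$ for the slope of $f$ on $I_m$, I then impose on $\delta$ the following smallness conditions: $[x_0-\delta,x_0+2\delta]\subset(a_m,b_m)$; each of $x_0-\delta,\ x_0+\delta,\ x_0+2\delta$ lies outside the countable set $S$; the extrapolating linear pieces of $g$ on $[a_m,x_0-\delta]$ and $[x_0+2\delta,b_m]$ have strictly positive slope (so $g$ is a strictly increasing homeomorphism of $[0,1]$ and $(g,\mathcal{J})$ is a fundamental pair); $|g-f|<\alpha$ on $[0,1]$ (for which $\delta\le\alpha/(s_m+1)$ suffices, since the affected pieces differ from the linear graph of $f$ by a constant times $\delta$); and one further bound needed to obtain $Q\subset P$ (see below). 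Each of these is met for all sufficiently small $\delta$ outside a countable exceptional set. With $\delta$ fixed, I choose the acceptable sequence as a small perturbation of the geometric sequence $x_0+\delta\cdot 4^{-(n-1)}$, adjusting each $x_n$ slightly to avoid $S$ while preserving both \cref{geometric decay} and the convergence $x_n\searrow x_0$; this gives the final $S$-avoidance condition of the lemma. Finally I define $Q=P_{g,\mathcal{J},\{\eta_k\}}$ by keeping the original $\varepsilon$ on every unchanged interval of $\mathcal{I}\setminus\{I_m\}$ and choosing some $\eta\le 1/2$ on each newly introduced interval, small enough that its parallelogram fits inside $P_{L_m,\varepsilon_m}$.

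The principal difficulty is verifying \cref{Q subset P}. For the four outer new parallelograms supported on $[a_m,x_0-\delta]$, $[x_0-\delta,x_0]$, $[x_0+\delta,x_0+2\delta]$ and $[x_0+2\delta,b_m]$, the diagonals stay within a constant multiple of $\delta$ of the graph of $f$ and a direct estimate with small $\eta$ gives containment in $P_{L_m,\varepsilon_m}$. The zigzag parallelograms $P_{L'_n,\eta_n}$ over $[x_{n+1},x_n]$ are the delicate case: by \cref{m_n goes to infinity} the slope $m_n=\delta/(2^n(x_n-x_{n+1}))$ tends to $\infty$, so the side slopes $(1\pm\eta_n)m_n$ of $P_{L'_n,\eta_n}$ greatly exceed the side slopes $(1\pm\varepsilon_m)s_m$ of $P_{L_m,\varepsilon_m}$ and the inclusion is not visible at first glance. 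The saving observation is that the maximum vertical bulge of $P_{L'_n,\eta_n}$ away from its diagonal is attained at the midpoint abscissa and equals $\eta_n m_n(x_n-x_{n+1})/2=\eta_n\delta/2^{n+1}$; this is uniformly bounded by $\delta/2$ and decays geometrically in $n$. Combining this with a uniform bound of the form $|g-f|\le C(s_m)\delta$ on $[x_0,x_1]$ and the positive lower bound $\varepsilon_m s_m \min(x_0-a_m,\,b_m-x_0)$ on the half-width of $P_{L_m,\varepsilon_m}$ throughout $[x_0-\delta,x_0+2\delta]$, a sufficiently small choice of $\delta$ forces every new parallelogram inside $P_{L_m,\varepsilon_m}$, completing $Q\subset P$.
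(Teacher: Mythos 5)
Your construction is essentially the one in the paper: replace $f$ on $[x_0-\delta,x_0+2\delta]$ by the slope-$1$/acceptable piece, extend linearly to the endpoints of $I_m$, take $\delta$ small, and perturb a geometric sequence to obtain $\{x_n\}$ avoiding $S$. The one place where your argument genuinely deviates is the verification of $Q\subset P$ for the zigzag parallelograms $P_{L'_n,\eta_n}$ over $[x_{n+1},x_n]$. You bound the maximal vertical bulge of each such parallelogram ($\eta_n m_n(x_n-x_{n+1})/2=\eta_n\delta/2^{n+1}$), add the bound $|g-f|\le C\delta$, and compare to a lower bound on the vertical half-width of $P_{L_m,\varepsilon_m}$ over $[x_0-\delta,x_0+2\delta]$; this works, but it hides the cleaner fact exploited in the paper: because the side slopes $(1\pm\varepsilon)m_n$ are both positive for $\varepsilon\le 1$, each parallelogram $P_{M_k,\varepsilon}$ is contained in the axis-aligned bounding rectangle of its diagonal, hence in $[x_0,x_1]\times[g(x_0),g(x_1)]$, which sits inside the square $[x_0-\delta,x_0+2\delta]\times[f(x_0)-\delta,f(x_0)+2\delta]\subset\interior{P_m}$ by the choice of $\delta$. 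The paper's version avoids any half-width estimate and lets one take a fixed $\eta=1/2$ uniformly, whereas your version requires tracking the dependence on $\varepsilon_m s_m\min(x_0-a_m,b_m-x_0)$. Both give the lemma; the bounding-rectangle observation is simply tighter and removes a computation, so it is worth noticing. The remaining minor points (choice of $\delta$ to guarantee $|g-f|<\alpha$, positivity of the two extrapolating slopes for small $\delta$, thin wedges at $(a_m,f(a_m))$ and $(b_m,f(b_m))$ for the two outer parallelograms) match the paper's treatment.
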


\begin{remark}\label{g is successor of f}
Note that it follows from (\ref{g agrees f}) and  (\ref{J and I}) that $(g,{\mathcal J})$ is a successor of $(f,{\mathcal I})$. Note also that $\delta$ can be chosen to be arbitrarily small.
\end{remark}

\begin{proof}[Proof of \cref{fundamental lemma}]
Assume that $f,{\mathcal I}, \{\varepsilon_n\}$, and $P$ are as in the statement of the \lcnamecref{fundamental lemma} and let $x_0 \in (a_m,b_m)$.  Then $(x_0,f(x_0))$ lies on the segment $L_m=L_{[a_m,b_m]}$ and is contained in the interior of $P_m=P_{L_m,\varepsilon_m}$.  It follows that we can choose $\delta>0$ small enough to ensure that
\begin{equation}\label{square containment}
[x_0-\delta,x_0+2\delta]\times [f(x_0)-\delta,f(x_0)+2\delta]\subset \interior{P_m}.
\end{equation}

We also require that $(\{x_0-\delta, x_0+\delta, x_0+2\delta\} \cup\bigcup_n\{x_n:n\in \field{N}\}) \cap S=\emptyset$.

Let $K_1$ and $K_2$ be the closed line segments connecting $(a_m,f(a_m))$ with $(x_0-\delta,f(x_0)-\delta)$ and $(x_0+2\delta,f(x_0)+2\delta)$ with $(b_m,f(b_m))$, respectively.   Note that $K_1$ and $K_2$ are contained in $\interior{P_m} \cup \{(a_m,f(a_m)),(b_m,f(b_m))\}$, and therefore we can choose $\varepsilon_0$ small enough so that $P_{K_i,\varepsilon_0} \subset P_m$ for $i=1,2$.
Now let $x_1=x_0+\delta$, choose $x_2,x_3,\ldots$ so that $\{x_n\}$ is acceptable for $x$ and define
\begin{equation}
g(x)=
\begin{cases}
            f(x) &  x\in[0,a_m]\cup[b_m,1], \\
            f(a_m)+\frac{f(x_0)-\delta-f(a_m)}{x_0-\delta-a_m}(x-a_m) & x \in [a_m,x_0-\delta],\\
            f(x_0)-x_0+x &  x \in [x_0-\delta,x_0],\\
            f(x_0)+f_{\{x_n\}}(x)   &  x \in [x_0,x_0+\delta],\\
            f(x_0)-x_0+x    & x \in [x_0+\delta,x_0+2\delta],\\
            f(x_0)+2\delta+\frac{f(b_m)-f(x_0)-2\delta}{b_m-x_0-2\delta}(x-x_0-2\delta) &  x \in [x_0+2\delta,b_m].
\end{cases}
\end{equation}

Then \cref{g agrees f,slope 1,acceptable g}
hold trivially and if $\delta >0$ is chosen small enough, we have \cref{alpha bound} as well.  Let ${\mathcal J}$ be defined by (\ref{J and I}).
It remains to choose our envelope $Q$ such that $Q\subset P$.  For each $k\in \field{N}$,
we let the set~$M_k$ be defined as $M_k=[(x_{k+1},g(x_{k+1})),(x_k,g(x_k))]$ and in case $k=0$, we define $M_0=[(x_0-\delta,f(x_0-\delta)),(x_0,f(x_0))]$ and for $k=\infty$, we set
\begin{equation*}
M_\infty =[(x_0+\delta,f(x_0+\delta)),(x_0+2\delta,f(x_0+2\delta))].
\end{equation*}
Note that for any $\varepsilon$ such that $0<\varepsilon<1$ and any $k \in \field{N}\cup\{0,\infty\}$ we have
\begin{equation*}
P_{M_k,\varepsilon}\subset [x_0,x_1]\times [g(x_0),g(x_1)]\subset \interior{P_m}\subset P.
\end{equation*}
Setting
\begin{equation*}
Q=\bigcup_{i=1,\,i\neq m}^\infty P_{L_i,\varepsilon_i} \cup (\bigcup_{i=1}^2 P_{K_i,\varepsilon_0}) \cup(\bigcup_{i=0}^\infty P_{M_i,1/2})\cup P_{M_\infty,1/2},
\end{equation*}
we see that $Q$ is an envelope for $g$ and $Q \subset P$, as desired.
\end{proof}

\begin{proof}[Proof of \cref{countable theorem}]
We begin by setting $f_0(x)=x$ on the interval~$[0,1]$ and ${\mathcal I}=\{[0,1]\}$ and letting ${P_0=P_{f_0,{\mathcal I},1/2}}$ be the envelope associated with $f_0$.  Note that $(f_0,{\mathcal I})$ is a fundamental pair.  Now using \cref{fundamental lemma} and \cref{g is successor of f} inductively
and recalling that $S=\{ s_{1},s_{2},\ldots \}$, it is easy to see that for each $n \in \field{N}$ we can choose $\delta_n>0$ (assume $\delta_n \convFromRightB 0$) and a fundamental pair $(f_n,{\mathcal I}_n)$ with envelope $P_n$ such that
\begin{gather}\label{f_n acceptable}
f_n \mbox{ is acceptable on } [s_n,s_n+\delta_n],\\
\label{f_n slope 1}
f_n \mbox{ is linear with slope $1$ on } [s_n-\delta_n,s_n] \mbox{ and on } [s_n+\delta_n,s_n+2\delta_n],\\
\label{s_n-delta_n not in S}
\{s_n-\delta_n,s_n+\delta_n,s_n+2\delta_n\}\cap S =\emptyset,\\
\label{2^-n bound}
|f_n-f_{n-1}|<2^{-n} \mbox{ on } [0,1],\\
\label{P_n containment}
P_n\subset P_{n-1},\\
\label{f_n successor}
(f_n,{\mathcal I}_n) \mbox{ is a successor of }(f_{n-1},{\mathcal I}_{n-1}).
	\end{gather}

We also require that for each $n \in \field{N}$ we have
\begin{gather}
[s_n-\delta_n,s_n+2\delta_n]\subset (a_m,b_m) \mbox{ where } [a_m,b_m]\in {\mathcal I}_{n-1}, \label{containment}\\
\mbox{ and }\quad [a_m,s_n-\delta_n],[s_n+2\delta_n,b_m]\in {\mathcal I}_n. \label{descendant}
\end{gather}

\begin{figure}
\centering
\includegraphics[scale=.8]{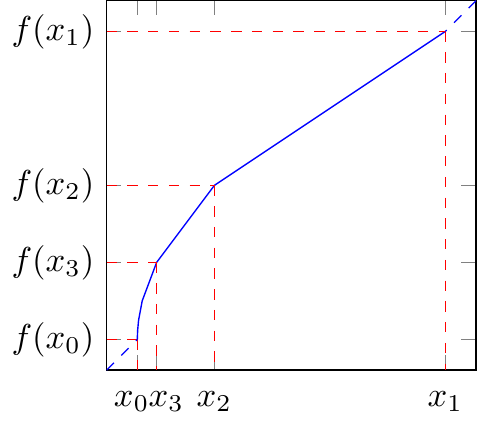}\quad\includegraphics[scale=.8]{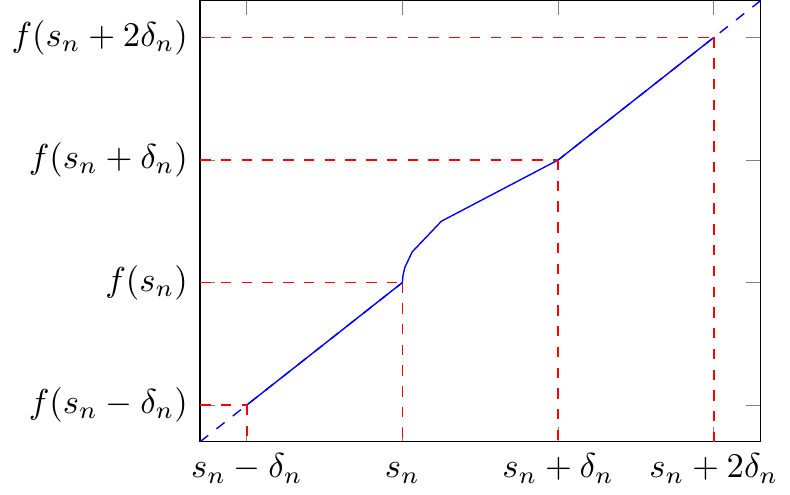}
\caption{Parts of $f_{x_0}$ and $f_n$, respectively}
\label{fundamental and fn}
\end{figure}

Using \cref{2^-n bound}, we can define $f$ as the pointwise limit of the sequence $\{f_n\}$ on $[0,1]$.    Clearly, $f$ is continuous and increasing on $[0,1]$.  We extend $f$ to the whole real line by defining $f(x)=x$ outside of $[0,1]$.   It remains to show that $l_f^\infty=S$ and that $\Lip f(x)<\infty$ for $x\not\in \overline{S}$.

Thus, suppose that $x \notin \overline{S}$.  If $x \notin [0,1]$, then clearly $\Lip f(x)=1< \infty$.  Similarly, if $x=0$ or $x=1$, then $\Lip f(x)<\infty$ since $f$ is linear on $\oped{-\infty}{0}$ and on $\clopen{1}{\infty}$ and the graph of $f$ restricted to $[0,1]$ is contained in $P_0$.  Thus we may assume that $x \in (0,1)$.  Now suppose that for some $n\in\field{N}$ the point $(x,f_n(x))$ is a vertex of one of the parallelograms that make up $P_n$.  In this case, since $x \notin S$ and $x \in (0,1)$, it follows that $(x,f_n(x))$ and hence $(x,f(x))$ is the shared vertex of two adjoining parallelograms in $P_n$.  Since the graph of $f$ restricted to $[0,1]$ is contained in $P_n$, it follows again that $\Lip f(x) < \infty$.  Thus, we may further assume that for each $n\in\field{N}$ the point $(x,f_n(x))$ is not a vertex of any parallelogram of $P_n$. Since $x \notin \overline{S}$, it follows from the construction of $f$, that $f$ is linear on an open interval containing $x$ and hence, once again, we have $\Lip f(x) < \infty$.

To finish the proof we need to show that $l_f^\infty=S$.  First of all, consider $s_n \in S$.  From \cref{P_n containment} it follows that the graph of $f|_{[0,1]}$ is contained in $P_n$ and therefore by \cref{f_n acceptable} and \cref{m_n goes to infinity}  it clearly follows that $\lip f(s_n)=\infty$.

   Now suppose that $x \notin S$. Since we have already established earlier that $\Lip f(x) < \infty$ if $x \notin (0,1)$, we may assume that $x \in (0,1)$. Similarly, we may also assume that for each  $n\in\field{N}$ the point $(x,f_n(x))$ is not a vertex of any parallelogram of $P_n$, $n \in \field{N}$.  For each $n \in \field{N}$ we define $J_n=(s_n-\delta_n,s_n+2\delta_n)$ and we consider two cases depending on whether or not $x$ is contained in finitely or infinitely many of these intervals.

Suppose, first of all, that $x$ is contained in only finitely many of these intervals $J_n$, and choose $N$ to be the largest $n$ such that $x \in J_n$.

  Recalling that for any $n$, $P_n$ is a countable union of non-overlapping parallelograms $\{Q_{n,j}\}$, we let $Q_n$ be the parallelogram in this collection containing the point $(x,f(x))$. For each $n \in \field{N}$ we also define the line segment~$L_n$ by $L_n=[(a_n, f_n(a_n)),(b_n, f_n(b_n))]$ and set $\varepsilon_n>0$ so that $Q_n=P_{L_n,\varepsilon_n}$. Then $x \in (a_n,b_n)$ for all $n \in \field{N}$.
	
	We note that for each $n\ge N$, the parallelogram~$Q_{n+1}$ is a direct descendant of $Q_n$. This fact follows from (\ref{P_n containment}), (\ref{containment}) and (\ref{descendant}).  Since the intervals $[a_n,b_n]$ are nested, we can let $a = \lim_{n\to \infty} a_n$ and $b=\lim_{n\to \infty} b_n$ so we have $a \le x \le b$.   If $a < x < b$, then the  graph of $f$ is linear on $[a,b]$ and it follows, as above, that $\Lip f(x)< \infty$, so trivially $\lip f(x) < \infty$.  Now suppose that either $x=a$ or $x=b$.  For each $n\in \field{N}$ define $r_n=\min\{x-a_n,b_n-x\}$.  Then, by
\cref{direct descendant inequality} and \cref{parallelogram estimate}, we have $q_f(x,r_n) \le q_{Q_N}$ for all $n \ge N$.  Since $r_n \to 0$, it follows that  $\lip f(x)< \infty$ in this case as well.

Finally, we assume that $x$ is contained in infinitely many of the intervals~$J_n$.  In particular, suppose that the point $x$ is contained in $J_n$.  If $x \in (s_n-\delta_n,s_n)\cup (s_n+\delta_n,s_n+2\delta_n)$, then, from \cref{f_n slope 1} and the fact that the graph of $f$ is contained in $P_n$, we get $q_f(x,r)\le 5/2$, as computed in \cref{q estimate}, where $r=\min\{|x-s_n+\delta_n|,|x-s_n|,|x-s_n-\delta_n|,|x-s_n-2\delta_n|\}$, so $r \le 1/2 \cdot \delta_n$.  On the other hand, suppose that
$x \in (s_n,s_n+\delta_n)$.  Using \cref{f_n acceptable}, \cref{f_n slope 1}, and that the graph of $f|_{[0,1]}$ is contained in $P_n$, we get
\begin{equation*}
f(s_n+2\delta_n)-f(s_n+\delta_n)=f(s_n+\delta_n)-f(s_n)=f(s_n)-f(s_n-\delta_n)=\delta_n,
\end{equation*}
and now it follows easily from this and the fact that $f$ is increasing that $q_f(x,\delta_n)\le 2$.

Thus, if $x \in J_n$, then $q_f(x,r)\le 5/2$ for some $r \le \delta_n$.  Therefore, if $x$ is contained in infinitely many intervals~$J_n$, we get $\lip f(x)<\infty$, and we are done with the proof of \cref{countable theorem}.
\end{proof}

A simple modification of the construction of $f$ in \cref{countable theorem} yields the following result.

\begin{corollary}\label{countable modification}
Given a countable set $S$ contained in an open interval $(a,b)$, there exists a continuous function $f\colon \field{R}\to \field{R}$ such that $l_f^\infty = S$, $\Lip f(x)<\infty$ for $x \notin \overline{S}$ and $f$ satisfies:
\begin{gather}\label{bounded by 1}
0 \le f(x) \le \min\{x-a,b-x\} \text{ for all } x \in (a,b),\\
f(x)=0 \mbox{ for all } x \notin (a,b)\label{zero outside (a,b)}.
\end{gather}
\end{corollary}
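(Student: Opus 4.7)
Plan. My strategy is to reduce \cref{countable modification} to \cref{countable theorem} by a piecewise affine rescaling that turns a tent on $(a,b)$ into two scaled copies of the function produced by that theorem. I would first pick $c\in(a,b)\setminus S$, which exists because $S$ is countable, set $S_L=S\cap(a,c)$ and $S_R=S\cap(c,b)$, and pull them back to countable subsets $T_L,T_R\subset(0,1)$ via the affine bijections $t\mapsto a+t(c-a)$ and $t\mapsto b-t(b-c)$. Applying \cref{countable theorem} to $T_L$ and $T_R$ yields continuous increasing functions $g_L,g_R\colon\field{R}\to\field{R}$ with $l_{g_{L/R}}^\infty=T_{L/R}$ and $\Lip g_{L/R}(t)<\infty$ whenever $t\notin\overline{T_{L/R}}$. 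Crucially, inspecting the construction in the proof of \cref{countable theorem} shows that the graphs of $g_L,g_R$ on $[0,1]$ lie in the initial envelope $P_0$ (the parallelogram with main diagonal $[(0,0),(1,1)]$ and $\varepsilon_0=\tfrac12$), so in particular $g_{L/R}(t)\le\tfrac{3t}{2}$ for $t\in[0,\tfrac12]$, $g_{L/R}(t)\le\tfrac{t+1}{2}$ for $t\in[\tfrac12,1]$, and symmetrically $1-g_{L/R}(t)\le\tfrac{3(1-t)}{2}$ near $t=1$.

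With $\alpha=\tfrac{2}{3}\min(c-a,b-c)$, I would define
\begin{equation*}
f(x)=\begin{cases} 0, & x\notin(a,b),\\ \alpha\,g_L\!\bigl(\tfrac{x-a}{c-a}\bigr), & x\in[a,c],\\ \alpha\,g_R\!\bigl(\tfrac{b-x}{b-c}\bigr), & x\in[c,b]. \end{cases}
\end{equation*}
Since $g_{L/R}(0)=0$ and $g_{L/R}(1)=1$, $f$ is continuous with $f(a)=f(b)=0$ and $f(c)=\alpha$, so \cref{zero outside (a,b)} is immediate. For \cref{bounded by 1}, positivity is trivial, and for $x\in[a,c]$ with $t=(x-a)/(c-a)$ the envelope bound combined with $\alpha\le\tfrac{2}{3}(c-a)$ gives $\alpha g_L(t)\le(c-a)t=x-a$ after a short case split on $t\lessgtr\tfrac12$, while $\alpha g_L(t)\le\alpha\le\tfrac{2}{3}(b-c)\le b-x$; the right half is analogous.

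The remaining $\lip$/$\Lip$ properties then follow from the affine change of variables. For $s\in S_L$, a direct computation gives $\lip f(s)=\tfrac{\alpha}{c-a}\lip g_L((s-a)/(c-a))=\infty$, and analogously on $S_R$, so $S\subseteq l_f^\infty$. Conversely, for $x\in(a,c)\setminus\overline S$ the rescaled point avoids $\overline{T_L}$ and hence $\Lip f(x)<\infty$; at $x=c\notin S$ the envelope bound $1-g_{L/R}(t)\le\tfrac{3(1-t)}{2}$ forces $\Lip f(c)<\infty$; at $x\in\{a,b\}$ the inequality $|f(y)-f(x)|\le|y-x|$ from \cref{bounded by 1} bounds $\Lip f(x)$ by $1$; and outside $[a,b]$ the function is locally zero. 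The main obstacle is \cref{bounded by 1}: its proof requires a Lipschitz-type control of $g_{L/R}$ near the endpoints $\{0,1\}$, which is not part of the statement of \cref{countable theorem} and must be extracted from the initial envelope $P_0$ appearing in the construction.
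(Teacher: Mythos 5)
Your proof is correct. The paper gives no argument for this corollary beyond remarking that a simple modification of the construction in \cref{countable theorem} suffices, and your reflect-and-rescale scheme — pick $c\in(a,b)\setminus S$, pull $S\cap(a,c)$ and $S\cap(c,b)$ back to $(0,1)$, apply \cref{countable theorem} to get $g_L,g_R$, then glue $\alpha g_L$ and the reflected $\alpha g_R$ into a tent with $\alpha=\tfrac23\min(c-a,b-c)$ — is a clean and valid implementation of exactly such a modification. The verifications all check: $g_{L}(1)=g_R(1)=1$ gives continuity at $c$ and $\Lip f(c)\le 1$ via the lower envelope edge $1-g(t)\le\tfrac{3(1-t)}{2}$; the affine change of variables conjugates $\lip$ and $\Lip$ exactly; and the upper envelope edges $g(t)\le\tfrac{3t}{2}$ on $[0,\tfrac12]$ and $g(t)\le\tfrac{t+1}{2}$ on $[\tfrac12,1]$ deliver \cref{bounded by 1} with the chosen $\alpha$. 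You also correctly identify the one point that cannot be obtained from the \emph{statement} of \cref{countable theorem} alone, namely that the graph of the constructed function lies in the initial envelope $P_0$ (it lies in every $P_n$ and $P_n\subset P_{n-1}$); this is precisely why the paper speaks of modifying the construction rather than merely invoking the theorem. One cosmetic remark: if $T_L$ or $T_R$ happens to be finite, \cref{countable theorem} dismisses that case as trivial, so strictly you should note that a witnessing function with graph in $P_0$ is also available there (e.g.\ by running the same inductive scheme for finitely many steps).
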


The goal in this \levelText{} is the proof of the following result.
\begin{proposition}\label{nowhere dense perfect case}
Let $E\subset \field{R}$ be a nowhere dense, perfect set. Then there exists a continuous function $f\colon \field{R}\to \field{R}$ such that $l_f^\infty=E$ and $f$ is constant on open intervals contiguous to $E$ and therefore $\Lip f(x)=0$ for all $x \notin E$.
\end{proposition}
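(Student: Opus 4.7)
The construction mirrors the Cantor-type hierarchy of $E$. Since $E$ is nowhere dense and perfect we can write $E = \bigcap_{k\ge 0} E_k$, where each $E_k$ is a finite disjoint union of closed intervals obtained from $E_{k-1}$ by deleting a contiguous interval of $E$ from each component; by judicious selection the maximum component length $\delta_k$ can be made to decay as rapidly as desired (say $\delta_k \le 4^{-k}$). We then construct $f = \lim_{k\to\infty} f_k$ as the uniform limit of continuous piecewise-linear functions $f_k$, each linear on every component of $E_k$ and constant on every gap of $E$ already removed by stage $k$.

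The passage from $f_{k-1}$ to $f_k$ introduces, on each parent component $J = [a,b]$ with $f_{k-1}(a)=A$ and $f_{k-1}(b)=B$, a ``tent'' modification: on the newly removed gap $[a',b']$ we set $f_k \equiv C \coloneqq (A+B)/2 + \varepsilon\, h_k$, with $\varepsilon \in \{-1,+1\}$ a sign attached to the component (to be specified), and interpolate linearly on the two sub-components of $E_k$ inside $J$. Choosing $h_k = 2^{-k}$ with $\delta_k \le 4^{-k}$ gives $\sum_k h_k < \infty$, so $f_k \to f$ uniformly and $f$ is continuous. Moreover $f$ is constant on every contiguous interval of $E$ because each such interval equals some $[a',b']$ removed at a finite stage $k_0$, and $f_{k'}$ agrees with $f_{k_0}$ on it for every $k'\ge k_0$; extending $f$ continuously outside $[\min E, \max E]$ as a constant then yields $\Lip f(x) = 0$ for $x \notin E$.

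To establish $\lip f(x) = \infty$ for $x \in E$, I would exploit the slope $\mu_k$ of $f_k$ on the component $J_{k,j_k(x)}$ of $E_k$ containing $x$, of length $L_k \le \delta_k$. A direct computation of the tent construction gives the recursion $\mu_{k+1}L_{k+1} = \mu_k L_k / 2 + h_{k+1}$, so with the above parameters $\mu_k L_k = k\cdot 2^{-k}$ and hence $\mu_k = k \cdot 2^k \to \infty$. For $r$ at most $L_k/2$ one can pick $y \in J_{k,j_k(x)}$ with $|y - x| = r$, giving $|f_k(y) - f_k(x)| = \mu_k r$; if one chooses the signs $\varepsilon$ cleverly so that $\|f - f_k\|_\infty = O(h_{k+1})$ uniformly on $E$, then $q_f(x,r) \ge \mu_k - O(h_{k+1}/r) \ge \mu_k/2$ for every $r$ in a range of the form $[Ch_{k+1}/\mu_k,\ L_k/2]$. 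A verification that these ranges, as $k\to\infty$, overlap to cover $(0, r_0)$ for some $r_0 > 0$ then yields $\lip f(x) = \liminf_{r\to 0^+}q_f(x,r) = \infty$, as required.

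The main technical obstacle is achieving the uniform bound $\|f - f_k\|_\infty = O(h_{k+1})$ rather than the naive $O(\mu_k L_k)$: without cancellation the tail is of the same order as the slope signal $\mu_k \cdot L_k$ at the critical scale $r\sim L_k$, so the slope estimate fails. Accomplishing this cancellation --- for instance by a deterministic alternation of the signs $\varepsilon$ along each branch of the tree of nested components, or by a more refined sign assignment of Rademacher type --- at every $x \in E$ simultaneously is the crux of the proof. Once this uniform bound is in hand, the slope-based lower bound on $q_f(x,r)$ and the verification that the ``good'' scales cover $(0,r_0)$ are then routine consequences of the explicit parameter choices.
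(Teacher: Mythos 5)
Your proposal takes a genuinely different route from the paper, and you are right to flag the uniform tail bound as the crux --- but that bound is not merely ``the crux,'' it is in fact unattainable for the tent construction as you have set it up, so the plan as written does not close. In the recursion $C=\tfrac{A+B}{2}+\varepsilon h_{k+1}$, the two children of a component of $E_k$ automatically receive \emph{opposite} contributions: on one sub-component the rise is $\mu_k L_k/2+\varepsilon h_{k+1}$ and on the other it is $\mu_k L_k/2-\varepsilon h_{k+1}$, so the sign $\varepsilon$ only permutes which child is which; it cannot make both children gain. Along a branch of the Cantor tree that alternates between ``$+$'' and ``$-$'' children, the quantity $\mu_k L_k$ stays $O(2^{-k})$ rather than growing like $k2^{-k}$, so the claimed recursion $\mu_k L_k=k\cdot 2^{-k}$ fails on a dense set of branches. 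Worse, the two requirements you need are in direct tension: slope growth wants the partial sums of signs along the branch to drift, while the cancellation $\|f-f_k\|_\infty=O(h_{k+1})$ wants them to stay bounded. And the cancellation cannot come from the $\varepsilon$'s at all, because for a gap $(a',b')$ that is not centered in its parent $J=[a,b]$ one has $|C-f_k(a')|=|\mu_k(\tfrac{L_k}{2}-(a'-a))\pm h_{k+1}|$, which is of order $\mu_k L_k$ whenever the gap sits well off-center; once set, the value on a gap is never revised, so this error is permanent and cannot be cancelled by later stages. Since the geometry of the gaps is dictated by $E$ and not by you, there is no choice of signs that rescues the bound for a general nowhere dense perfect set.

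The paper circumvents the entire slope/tent paradigm. It enumerates the contiguous intervals $I_j$ of $E$ by decreasing length, proves a combinatorial lemma (Lemma~\ref{dyadic 2 index}) producing thresholds $h_k$ so that every dyadic interval of scale $4^{-k}$ meeting $E$ contains at least two of $I_{h_{k-1}+1},\dots,I_{h_k}$, and then assigns a constant value to each $I_i$ recursively: usually the average of the two neighbouring already-assigned values, but with a one-time additive ``boost'' of $2^{-k+1}$ when the neighbouring values are too close. Continuity comes from a bounded-oscillation lemma (at most $8$ boosts per dyadic interval per scale, Lemma~\ref{8 interval}), and $\lip f=\infty$ on $E$ comes directly from $q_f(x,4^{-i})\ge 2^{i-2}$ via the jump of size $\ge 2^{-i-1}$ guaranteed inside each dyadic interval, combined with Lemma~\ref{lip and subsequences}. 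In short: the paper controls \emph{absolute jump sizes at every dyadic scale} rather than slopes of piecewise-linear approximants, and this sidesteps the tail-versus-signal competition that blocks your plan. If you want to salvage a hierarchy-of-components approach, you would need to replace the midpoint-plus-$\varepsilon h$ rule by one in which the gap value is chosen relative to the local values of $f_k$ at the gap endpoints (not the parent endpoints) and with a boost that is absolute rather than proportional to $\mu_k L_k$ --- which is essentially what the paper's averaging-plus-boost rule does.
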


In the proof of \cref{nowhere dense perfect case}, we will make use of the following simple \lcnamecref{lip and subsequences}, whose proof we leave to the reader.

\begin{lemma}\label{lip and subsequences}
Let $k>1$ and $f\colon\field{R}\to \field{R}$. Then
\begin{equation*}
  \lip f(x)=+\infty \Longleftrightarrow \lim_{n\to \infty} q_f(x,k^{-n})=\infty.
\end{equation*}
\end{lemma}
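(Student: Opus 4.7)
The plan is to reduce the continuous variable $r$ to the discrete sequence $r_n=k^{-n}$ via a simple monotonicity property of the numerator in $q_f$. Define $N_f(x,r) \coloneqq r\cdot q_f(x,r) = \sup_{y\in[x-r,x+r]} \abs{f(y)-f(x)}$. Since the set $[x-r,x+r]$ grows with $r$, the function $r \mapsto N_f(x,r)$ is non-decreasing. This single observation is what drives both implications.

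For the ($\Rightarrow$) direction, suppose $\lip f(x)=+\infty$, i.e.\ $\liminf_{r\convFromRight 0} q_f(x,r)=+\infty$. Since $k^{-n}\convFromRight 0$, the restriction of $q_f(x,\cdot)$ to this subsequence also tends to $+\infty$; this is immediate from the definition of liminf and requires no further argument.

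For the ($\Leftarrow$) direction, suppose $q_f(x,k^{-n})\to\infty$. Given any $r\in(0,1)$, choose $n\in\field{N}$ with $k^{-(n+1)} \le r < k^{-n}$. By monotonicity of $N_f(x,\cdot)$,
\begin{equation*}
r\cdot q_f(x,r) = N_f(x,r) \ge N_f(x,k^{-(n+1)}) = k^{-(n+1)}\cdot q_f(x,k^{-(n+1)}),
\end{equation*}
and hence $q_f(x,r) \ge (k^{-(n+1)}/r)\cdot q_f(x,k^{-(n+1)}) \ge k^{-1}\cdot q_f(x,k^{-(n+1)})$. As $r\convFromRight 0$, the index $n$ tends to $\infty$, so the right-hand side tends to $+\infty$ by hypothesis, giving $\liminf_{r\convFromRight 0}q_f(x,r)=+\infty$, i.e.\ $\lip f(x)=+\infty$.

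There is no real obstacle here; the only point that deserves care is the sandwiching in the ($\Leftarrow$) direction, where one must remember to compare $q_f(x,r)$ with $q_f(x,k^{-(n+1)})$ (the smaller radius, giving a lower bound) rather than with $q_f(x,k^{-n})$.
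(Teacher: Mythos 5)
Your proof is correct, and since the paper explicitly leaves this lemma's proof to the reader, there is nothing to compare against: the monotonicity-of-the-numerator argument you give is exactly the natural one. The key sandwiching step in the backward direction is handled carefully (comparing to $q_f(x,k^{-(n+1)})$, the smaller radius), and the forward direction is indeed immediate from the definition of $\liminf$; note also that your argument correctly makes no use of continuity, consistent with the lemma's hypothesis that $f$ is merely a function.
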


\begin{definition}[strongly intersecting]
An interval $J$ is said to \formatDefinition{strongly intersect} a set $F$ if $F\cap \interior{J}\not=\emptyset$.
\end{definition}

\begin{definition}[dyadic interval] Let $k \in \field{N}$.  We say the interval~$J$ is a \formatDefinition{dyadic interval of scale $4^{-k}$} if $J=[\frac{j}{4^k},\frac{j+1}{4^k}]$ for some $j \in \field{Z}$.
\end{definition}

The construction of $f$ in \cref{nowhere dense perfect case} uses the following \lcnamecref{dyadic 2 index}.  We leave the details of its proof to the reader.

\begin{lemma}\label{dyadic 2 index}
Let $E$ be a nowhere dense, bounded, perfect set. Denoting $m\coloneqq\min E$ and $M\coloneqq\max E$, we write $\displaystyle [m,M]\setminus E = \bigcup_{j=1}^\infty I_j$, where the intervals~$I_j$ are pairwise disjoint, open, and satisfy $\lvert I_j \rvert \ge \lvert I_{j+1} \rvert$. Then there is a sequence of integers ${0\eqqcolon h_0<h_1<h_2<\cdots}$ such that if $J$ is a dyadic
interval of scale $4^{-k}$ strongly intersecting $E$, then there are at least two indices $i$ such that $h_{k-1}<i\leq h_k$ and $I_i\subset J$.
\end{lemma}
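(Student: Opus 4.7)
The plan is to reduce the lemma to a single topological observation about $E$ and then run a straightforward counting induction on $k$.

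The key step is: for any open interval $U$ with $U \cap E \neq \emptyset$, infinitely many of the complementary intervals $I_j$ are entirely contained in $U$. To prove this I would fix $x \in U \cap E$; since $E$ is perfect, $x$ is a limit point of $E$, so one can extract a monotone sequence $(y_n) \subset E \cap U$ with $y_n \to x$ and, after reindexing, $y_1, y_2, \ldots$ pairwise distinct. The open subintervals between consecutive terms (say $(y_{n+1}, y_n)$ after orienting the sequence) are pairwise disjoint and each lies in $U$. By nowhere density, each such subinterval is not contained in $E$, so it meets some complementary component $I_{j_n}$; since the endpoints $y_n, y_{n+1}$ both lie in $E$ and $I_{j_n}$ is a maximal connected component of $[m, M] \setminus E$, necessarily $I_{j_n} \subset (y_{n+1}, y_n) \subset U$. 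The pairwise disjointness of these subintervals forces the indices $j_n$ to be distinct, so infinitely many $j$ have $I_j \subset U$.

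Second, I would note that for each $k$ only finitely many dyadic intervals of scale $4^{-k}$ can strongly intersect the bounded set $E$ (at most roughly $4^k(M-m)+1$ of them); denote this finite collection by $\mathcal{D}_k$.

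Finally I would build the sequence $(h_k)$ inductively. Starting from $h_0 = 0$, suppose $h_{k-1}$ has been defined. For each $J \in \mathcal{D}_k$ apply the key step with $U = \interior{J}$: there are infinitely many $I_j \subset \interior{J}$, so in particular at least two indices with $j > h_{k-1}$. Pick any two such, $i_1(J) < i_2(J)$, and set
\begin{equation*}
h_k \coloneqq \max\Bigl\{ h_{k-1} + 1,\ \max_{J \in \mathcal{D}_k} i_2(J) \Bigr\}.
\end{equation*}
Then $h_k > h_{k-1}$, and for every $J \in \mathcal{D}_k$ the two indices $i_1(J), i_2(J)$ satisfy $h_{k-1} < i_\ell(J) \le h_k$ with $I_{i_\ell(J)} \subset J$, giving exactly the conclusion required. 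The only genuine point of content is the topological observation in the second paragraph; once it is in place, the inductive construction of $(h_k)$ is routine bookkeeping.
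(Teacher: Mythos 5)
Your proof is correct. The paper explicitly leaves the proof of this lemma to the reader, so there is no reference argument to compare against, but your route is exactly the kind of elementary verification the authors presumably had in mind: the topological observation (for any open interval $U$ meeting the perfect, nowhere dense set $E$, infinitely many contiguous intervals $I_j$ lie entirely inside $U$, proved by trapping a contiguous interval between consecutive terms of a monotone sequence in $E$ converging to a point of $E\cap U$), followed by a greedy induction choosing $h_k$ over the finitely many dyadic intervals of scale $4^{-k}$ that strongly intersect the bounded set $E$. The only point worth flagging is that your argument nowhere uses the hypothesis $\lvert I_j\rvert\ge \lvert I_{j+1}\rvert$; it therefore establishes the conclusion for an arbitrary enumeration of the contiguous intervals, which is a mild strengthening of the stated lemma. (The monotone ordering is what the paper exploits downstream in \cref{nowhere dense perfect case}, not in this lemma itself.)
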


\begin{proof}[Proof of \cref{nowhere dense perfect case}]
Assume that $E$ is nowhere dense and perfect.  We also assume without loss of generality that $E$ is bounded and define $m:=\min E$ and $M:=\max E$. We then choose a collection of open intervals $\displaystyle {\{I_j\}}_{j=1}^\infty={\{(a_j,b_j)\}}_{j=1}^\infty$ and a sequence of indices $\{h_j\}$ as in \cref{dyadic 2 index}.  Further, we define $I_{-1}=(-\infty,m)$ and $I_0=(M,\infty)$ and let $b_0=m$ and $a_0=M$.  After defining $f$ on $\field{R}\backslash (m,M)$ by setting
\begin{equation*}
f(x)=
\begin{cases}
  0 & \mbox{if } x\leq m, \\
  1 & \mbox{if } x\geq M,
\end{cases}
\end{equation*}
we next proceed by induction to define $f$ on each $\overline{I_j}=[a_j,b_j]$.  First, we define $f$ equal to 1/2 on $\overline{I_1}$.
Now suppose that we have defined $f$ on $\overline{I_1},\overline{I_2},\ldots, \overline{I_{i-1}}$ and assume that $h_{k-1}<i\leq h_k$. Let
\[ c_i=\max_{0\le j \le i-1,\, b_j < a_i}b_j \quad\text{ and  }\quad d_i=\min_{0\le j \le i-1,\, a_j>b_i}a_j.\]
In order to define $f$ on $\overline{I_i}=[a_i,b_i]$, we consider two cases.  First, assume that $I_i$ is contained in a dyadic interval $J$ of scale $4^{-k}$ and there is exactly one other interval $I_j$ with $1 \le j \le i-1$ such that $I_j \subset J$. If $\abs{f(c_i)-f(d_i)} < 2^{-k}$, then we define $f$ on $[a_i,b_i]$ as $f(c_i)+2^{-k+1}$. In all other cases, we assign the mean value of $f(c_i)$ and $f(d_i)$ to $f$ on $[a_i,b_i]$.

Our next task is to extend the definition of $f$ to the whole real line.  We begin with a pair of definitions and a couple of simple lemmas.

\begin{definition}[adjacent at level]\label{adjacent}
Suppose that $-1 \le i,j \le n$ and $b_i < a_j$.  We say that $I_i=(a_i,b_i)$ and $I_j=(a_j,b_j)$ are \formatDefinition{adjacent at level $n$} if none of the first $n$ intervals $I_1,I_2,\ldots, I_n$ are located between $I_i$ and $I_j$:
\begin{equation}\label{[b_i,a_j] intersect}
[b_i,a_j]\cap I_l =\emptyset \mbox{ for } l=1,2,\ldots,n.
\end{equation}
\end{definition}

\begin{definition}\label{J_k}
For each $k \in \field{N}$ we define ${\mathcal J}_k$ to be the collection of dyadic intervals $J$ of scale $4^{-k}$ such that $E \cap \interior{J}\neq \emptyset$.
\end{definition}

\begin{lemma}\label{8 interval}
If $I_i=(a_i,b_i)$ and $I_j=(a_j,b_j)$ are adjacent at level $n\ge h_k$, then $[b_i,a_j]$ intersects at most 8 distinct intervals from ${\mathcal{J}}_{k+1}$.
\end{lemma}

\begin{proof}
If the intervals $I_i$ and $I_j$ are adjacent at level $n$ where $n \ge h_k$, then it follows that $[b_i,a_j]$ does not contain any intervals from ${\mathcal J}_k$.

Therefore, $[b_i,a_j]$ intersects at most 2 distinct intervals from ${\mathcal J}_k$, and the \lcnamecref{8 interval} easily follows.
\end{proof}

\begin{lemma}\label{16 times 2^-k}
For each $j \in \field{N} \cup \{-1,0\}$ let $y_j$ be the value assigned to $f$ on $I_j$ so that $f(x)=y_j$ for all $x \in I_j$.
Suppose that $I_i=(a_i,b_i)$ and $I_j=(a_j,b_j)$ with $b_i<a_j$ are adjacent at level $n \ge h_k$.  Let
$\lambda = \min\{y_i,y_j\}$ and $\Lambda=\max\{y_i,y_j\}$.  Then for any $l \ge n$ such that $I_l\subset [b_i,a_j]$ we have
\begin{equation}\label{16 bound}
\lambda \le y_l \le \Lambda+16\cdot 2^{-k}.
\end{equation}
\end{lemma}

\begin{proof}
Assume that $I_i$ and $I_j$ as well as $l$ are as in the \lcnamecref{16 times 2^-k}.  Then $I_l$ has intervals $I_s$ and $I_t$ that are adjacent at level $l-1$ lying on its left and right, respectively.  Assume for the moment that $h_k < l \le h_{k+1}$.
Then either $y_l=\frac{y_s+y_t}2$ or
\begin{equation}\label{add 2^-k}
y_l=y_s+2^{-k}.
\end{equation}

But by \cref{8 interval}, equation \cref{add 2^-k} is applied at most 8 times for values of $l$ such that $n < l \le h_{k+1}$ and $I_l \subset [b_i,a_j]$.  It follows that $\lambda \le y_l \le \Lambda+8\cdot 2^{-k}$  if $n < l \le h_{k+1}$ and $I_l\subset[b_i,a_j]$.  Applying this same argument inductively on each interval $h_i < l \le h_{i+1}$ for $i\in\{k+1,k+2,\dots\}$, we get \cref{16 bound}.
\end{proof}

We now resume the proof of \cref{nowhere dense perfect case};  our next objective is to show that $f$ can be continuously extended to the whole real line. To that end, we pick an arbitrary point $x \in E$ and aim to show that the oscillation of $f$ at $x$ is $0$.  For each $i \in  \field{N}$ we define
\[ s_i=\max_{-1\le j \le i,\, b_j \le x}b_j \quad\text{ and }\quad t_i=\min_{0 \le j \le i,\, a_j \ge x}a_j.\]
We also define
\[ m_j=\inf_{t\in [s_j,t_j]\backslash E}f(t) \quad\text{ and }\quad M_j=\sup_{t\in [s_j,t_j]\backslash E}f(t).\]
Note that the nowhere denseness of~$E$ implies that
\begin{equation}\label{intersect = x}
\bigcap_{j=1}^\infty [s_j,t_j]=\{x\}.
\end{equation}
Therefore, in order to show that we may extend $f$ continuously at $x$, it suffices to prove that $M_j-m_j \to 0$ as $j \to \infty$.

To that end, we let $\varepsilon > 0$ and choose $k \in \field{N}$ so large that
\begin{equation}\label{17 less than epsilon}
17 \cdot 2^{-k} < \varepsilon.
\end{equation}
We next define an increasing sequence of integers
$1=j_1<j_2<j_3<\cdots$ inductively as follows:  For each $i \ge 1$ we let
$j_{i+1}$ be the smallest integer $j> j_{i}$ such that $ [s_j,t_j]\neq [s_{j_i},t_{j_i}]$.  Now, using \cref{intersect = x}, choose $i$ so that $t_{j_i}-s_{j_i}<4^{-k}$ and $j_i > h_k$. Suppose that
$\lvert f(t_{j_i})-f(s_{j_i})\rvert \ge 2^{-k}$.  In that case, $f$ is defined on $I_{j_{i+1}}$ as $\frac{f(s_{j_i})+f(t_{j_i})}2$, implying $\lvert f(t_{j_{i+1}})-f(s_{j_{i+1}}) \rvert =\frac12 \lvert f(t_{j_i})-f(s_{j_i})\rvert$.

Similarly, if  $\lvert f(t_{j_{i+1}})-f(s_{j_{i+1}}) \rvert \ge 2^{-k}$, then
\begin{equation*}
\lvert f(t_{j_{i+2}})-f(s_{j_{i+2}}) \rvert =\frac12 \lvert f(t_{j_{i+1}})-f(s_{j_{i+1}}) \rvert.
\end{equation*}  It follows that we can find $l > i$ such that
$\lvert f(t_{j_l})-f(s_{j_l}) \rvert < 2^{-k}$.  Applying \cref{16 times 2^-k} and inequality \cref{17 less than epsilon}, we get
$M_{j_n}-m_{j_n} < \varepsilon$ for all $n\ge l$, as desired.

We have now established that we can continuously extend $f$ to all of $\field{R}$.  Moreover, it follows from the construction that $f$ is constant on open intervals contiguous to $E$ so it remains to demonstrate that $\lip f(x)=\infty$ for all $x \in E$.

Assume that $x \in E$.  Using the fact that $E$ is perfect, we can choose a sequence of dyadic intervals $\{J_i\}$ such that each $J_i \in {\mathcal J}_i$ and $x\in J_i$.   Given an interval $J_i$, let $I_m=(a_m,b_m)$ and $I_n=(a_n,b_n)$ be two intervals in $J_i$; actually we want them to be the two intervals chosen first with this membership property.
We further assume that $b_m < a_n$.  From our rules for defining $f$ on the intervals $\{I_j\}$ it follows that
$\lvert f(b_m)-f(a_n)\rvert \ge 2^{-i-1}$.  Since $a_n, b_m, x \in J_i$, we see that $q_f(x,4^{-i})\ge \frac{2^{-i-2}}{4^{-i}}=2^{i-2}$.  Letting $i \to \infty$ and using \cref{lip and subsequences}, we end up with $\lip f(x)=\infty$, and we are done with the proof of \cref{nowhere dense perfect case}.
\end{proof}

\subsection{Countable union of nowhere dense, perfect sets}
Now, we start to look at unions of closed sets. To begin with, we look at countable unions of nowhere dense, perfect sets.
\begin{proposition}\label{nowhere dense perfect sets}
Suppose $F\subset \field{R}$ is a countable union of perfect, nowhere dense sets. Then there exists a continuous function $f\colon \field{R}\to \field{R}$ such that $f$ is constant on intervals contiguous to~$\overline{F}$ and $l_f^\infty=F$.
\end{proposition}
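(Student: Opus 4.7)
The plan is to reduce to the nested case and construct $f$ as a weighted sum of the functions supplied by \cref{nowhere dense perfect case}, with the weights chosen to respect the nested gap-structure of the $E_n$.

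Since a finite union of perfect nowhere dense sets remains perfect and nowhere dense, I first replace each $E_n$ by $\bigcup_{k\le n}E_k$ and assume $E_1\subset E_2\subset\cdots$. For each $n$, let $f_n\colon\field{R}\to\field{R}$ be the function produced by \cref{nowhere dense perfect case} applied to $E_n$: continuous, non-decreasing, normalized with $0\le f_n\le 1$, satisfying $l_{f_n}^\infty=E_n$, constant on every interval contiguous to $E_n$, and with $\Lip f_n(x)=0$ for $x\notin E_n$. Choose $c_n>0$ with $\sum c_n<\infty$ (specific choice discussed below) and set $f=\sum_n c_nf_n$; by uniform convergence $f$ is continuous and non-decreasing.

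There are three items to verify. First, if $I$ is an interval contiguous to $\overline F$ then $I\cap E_n=\emptyset$ for every $n$, so $I$ lies in a gap of $E_n$ on which $f_n$ is constant, and therefore $f$ is constant on $I$. Second, if $x\in E_n\subset F$, the monotonicity of each summand yields $\lvert f(y)-f(x)\rvert\ge c_n\lvert f_n(y)-f_n(x)\rvert$ for every $y$, hence $q_f(x,r)\ge c_nq_{f_n}(x,r)$ for every $r>0$, and therefore $\lip f(x)\ge c_n\lip f_n(x)=\infty$.

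The third and delicate item is that $\lip f(x)<\infty$ for $x\in\overline F\setminus F$. Nestedness places such an $x$ in a decreasing sequence of gaps $J_n=(\alpha_n,\beta_n)$ of $E_n$; setting $r_n\coloneqq\min(x-\alpha_n,\beta_n-x)>0$, one checks $r_n\downarrow 0$ (otherwise some open neighborhood of $x$ would lie in $\bigcap_n J_n\subset\overline F^c$, contradicting $x\in\overline F$). For $\lvert y-x\rvert\le r_n$, the interval $[x-r_n,x+r_n]$ lies in $\overline{J_n}\subset\overline{J_{n-1}}\subset\cdots\subset\overline{J_1}$, on which each of $f_1,\dots,f_n$ is constant; consequently
\begin{equation*}
  \lvert f(y)-f(x)\rvert\le 2\sum_{m>n}c_m,\qquad q_f(x,r_n)\le\frac{2}{r_n}\sum_{m>n}c_m.
\end{equation*}
The main obstacle is selecting the $c_n$ so that $r_n^{-1}\sum_{m>n}c_m$ stays bounded along some subsequence $n\to\infty$ simultaneously for every $x\in\overline F\setminus F$; since $r_n(x)$ depends sensitively on $x$, a naive geometric choice such as $c_n=2^{-n}$ need not suffice. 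I plan to resolve this by a self-similar refinement in which the $n$-th summand is assembled as a sum, over those gaps $J$ of $E_{n-1}$ meeting $E_n\setminus E_{n-1}$, of localized copies of \cref{nowhere dense perfect case}'s function applied to $E_n\cap\overline J$, each scaled to have amplitude on $J$ bounded by a small multiple of $\lvert J\rvert$. In this refinement the tail at an $x\in\overline F\setminus F$ sitting in the gap $J_n$ is dominated by a geometric series in the nested gap-lengths $\lvert J_m\rvert$ ($m\ge n$), yielding $\lip f(x)<\infty$; whenever $E_n\cap\overline J$ fails to be perfect at $\partial J$, the offending isolated points are absorbed separately via \cref{countable theorem}.
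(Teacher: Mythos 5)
Your overall strategy is the paper's: reduce to nested $E_1\subset E_2\subset\cdots$, apply \cref{nowhere dense perfect case} locally inside the gaps of the previous stage, and sum. You also correctly identify the exact trouble spot (points $x\in\overline F\setminus F$, where $r_n$ can shrink much faster than any pre-chosen sequence of weights). But the resolution you sketch --- scaling each localized copy in a gap $J$ ``to have amplitude on $J$ bounded by a small multiple of $|J|$'' --- is not strong enough, and this is precisely where the paper's proof earns its keep.

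The problem is that $r_n=\min(x-\alpha_n,\beta_n-x)$ can be arbitrarily small compared to $|J_n|$: if $x$ is approached by $F$ only from the left, say, then $\alpha_n\uparrow x$ while $\beta_n$ may converge to some $\beta>x$, and nothing prevents $x-\alpha_n$ from tending to $0$ much faster than any geometric sequence of coefficients you fixed in advance. In that case a tail estimate of the form $\sum_{m>n}\eta_m|J_{m-1}|\le C\eta_{n+1}|J_n|$ divided by $r_n$ can still blow up, because $|J_n|/r_n\to\infty$ with no control. Since the weights $\eta_m$ are built into $f$ and cannot depend on $x$, a uniform amplitude bound proportional to $|J|$ does not close the argument.

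What the paper does instead is normalize pointwise against the \emph{tent function} $\Phi_{O_I}$ of the ambient gap: it picks $s_I$ so that $s_I f_{F_I}+g_{F_I}\le\Phi_{O_I}$ on $I$ (inequality \cref{s_I inequality}), where $g_F$ is itself a sum of tent functions over the gaps of $F$. Because $\Phi_{O_I}$ vanishes at the endpoints of $O_I$, this produces the telescoping sandwich $\tilde f_n\le f\le\tilde f_n+g_{F_n}$ (display \cref{tilde f_n}). On the gap $J_n$ containing $x$, $\tilde f_n$ is constant and $g_{F_n}=\Phi_{J_n}$, so for $\lvert y-x\rvert\le r_n$ one gets $\lvert f(y)-f(x)\rvert\le\Phi_{J_n}(y)\le\Phi_{J_n}(x)+\lvert y-x\rvert\le 2r_n$, hence $q_f(x,r_n)\le 2$ --- a bound that is uniform in $x$ precisely because the tent function shrinks at the same rate as $r_n$ near $\partial J_n$. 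Replacing ``small multiple of $|J|$'' by ``dominated by $\Phi_J$'' is the missing idea; without it the third item of your verification fails. (Your side remark about using \cref{countable theorem} to absorb non-perfect boundary points is not needed once one invokes \cref{E subset F}, which produces closed intervals $I$ with $F\cap I$ already perfect and endpoints in $F$.)
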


In order to prove \cref{nowhere dense perfect sets}, we will need the following \lcnamecref{E subset F}:

\begin{lemma}\label{E subset F}
Suppose that $E \subset F$, where $E$ is closed and $F$ is perfect, nowhere dense, and bounded.  Then there exists a collection $\mathcal{I}=\mathcal{I}_{E,F}$ of pairwise disjoint, closed intervals
$I=[a_I,b_I]$ satisfying:
\begin{gather}\label{F backslash E}
F\backslash E \subset \bigcup_{I \in \mathcal{I}}I,\\
I \cap E = \emptyset \ \ \text{for all}\ I \in \mathcal{I}\label{I intersect E},\\
F \cap I \text{ is perfect for all}\ I \in \mathcal{I}\label{perfect intersection},\\
\{a_I,b_I\}\subset F \ \ \text{for all}\ I \in \mathcal{I}\label{a_I and b_I}.
\end{gather}
Moreover, we can choose the intervals so that each closed subinterval of an interval contiguous to $E$ intersects only finitely many elements in $\mathcal{I}$.
\end{lemma}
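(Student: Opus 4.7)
The plan is to decompose $\field{R} \setminus E = \bigsqcup_n (c_n, d_n)$ into its (at most countably many) connected components---the intervals contiguous to~$E$---and then, for each~$n$, to build a family $\mathcal{I}_n \subset (c_n, d_n)$ satisfying the required properties relative to $(c_n, d_n)$. Setting $\mathcal{I} = \bigcup_n \mathcal{I}_n$ will then work because the $(c_n, d_n)$ are pairwise disjoint, which automatically delivers pairwise disjointness across different~$n$ and the global local-finiteness clause. Fix a single $(c, d)$ contiguous to $E$ with $F \cap (c, d) \neq \emptyset$ and set $\alpha = \inf (F \cap (c, d))$ and $\beta = \sup (F \cap (c, d))$. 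Since~$F$ is perfect, $\alpha < \beta$ must hold: otherwise $F \cap (c, d) = \{\alpha\}$ would be a point isolated in~$F$, as $F \cap (c, \alpha) = F \cap (\alpha, d) = \emptyset$ blocks any limit points from inside $(c, d)$, and openness of $(c, d)$ blocks any from outside.

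The key observation is that gaps of~$F$---maximal open intervals disjoint from~$F$---serve as natural separators. If $J_1 = (u_1, v_1)$ and $J_2 = (u_2, v_2)$ are two such gaps contained in $(c, d)$ with $v_1 < u_2$, then $v_1, u_2 \in F$ and $F \cap [v_1, u_2]$ is perfect. Indeed, $v_1$, being the right endpoint of a gap, is a right-limit point of~$F$ but not a left-limit point, so by perfectness $F \cap (v_1, v_1 + \varepsilon) \neq \emptyset$ for every $\varepsilon > 0$; for $\varepsilon < u_2 - v_1$, these nearby $F$-points sit inside $(v_1, u_2)$, making $v_1$ a limit point of $F \cap [v_1, u_2]$, and symmetrically for~$u_2$, while interior points of $F \cap [v_1, u_2]$ are automatically limit points by perfectness of~$F$. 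The same reasoning applies with~$\alpha$ in place of~$v_1$ when $\alpha > c$ (then $\alpha \in F$ and $F \cap (c, \alpha) = \emptyset$ force $\alpha$ to be a right-limit point of~$F$), and analogously for~$\beta$.

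I would then split into four cases according to whether $\alpha = c$ or $\alpha > c$ and whether $\beta = d$ or $\beta < d$. Since~$F$ is nowhere dense, when $\alpha = c$ gaps of~$F$ must accumulate at~$c$ from the right (in any $(c, c+\varepsilon)$, $F$ fails to be dense, so there is a subinterval disjoint from~$F$, hence a gap), and analogously at~$d$ when $\beta = d$. In each case I would pick a linearly ordered sequence $\{J_k = (u_k, v_k)\}$ of gaps in $(c, d)$ that is discrete in $(c, d)$ and accumulates at~$c$ and/or~$d$ exactly when $\alpha = c$ and/or $\beta = d$. Then $\mathcal{I}(c, d)$ consists of the intervals $[v_k, u_{k+1}]$ between consecutive chosen gaps, with $v_k$ replaced by~$\alpha$ at the leftmost end if $\alpha > c$ and $u_{k+1}$ replaced by~$\beta$ at the rightmost end if $\beta < d$; in the doubly bounded case $\alpha > c$ and $\beta < d$ one may simply take $\mathcal{I}(c, d) = \{[\alpha, \beta]\}$. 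Each $x \in F \cap (c, d)$ avoids every gap and is therefore trapped in one of these intervals; the intervals are pairwise disjoint because $v_k < u_{k+1}$ strictly; the endpoint condition holds by construction; perfectness of $F \cap I$ was established in the previous paragraph; and local finiteness inside $(c, d)$ follows because any closed subinterval of $(c, d)$ is bounded away from~$c$ and~$d$ and hence meets only finitely many chosen gaps. The principal obstacle is the bookkeeping in the case analysis at the endpoints~$c$ and~$d$, making sure the one-sided limit properties of~$F$ at~$\alpha$ and~$\beta$ are correctly verified in every configuration.
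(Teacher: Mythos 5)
Your proposal is correct and follows essentially the same route as the paper's proof: decompose $\field{R}\setminus E$ into contiguous intervals, take the inf and sup of $F$ inside each, and cut with a sequence of gaps of $F$ accumulating at whichever endpoints coincide with the endpoints of the contiguous interval. You supply more of the routine verifications (that $\alpha<\beta$, that $F\cap[v_k,u_{k+1}]$ is perfect, that gaps accumulate at the endpoints by nowhere-density) which the paper leaves to the reader, but the underlying construction is identical.
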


\begin{proof}
Assume that $E$ and $F$ are as in the statement of the \lcnamecref{E subset F}.  If $F\backslash E$ is empty, then the result holds by taking the empty collection, so we assume that $F \backslash E \ne \emptyset$.  Let $\mathcal{J}$ be the collection of open intervals contiguous to $E$ that intersect $F$.  To prove the \lcnamecref{E subset F}, it suffices to  show that for each $J \in \mathcal{J}$ we can find a collection $\mathcal{I}_J$ of pairwise disjoint, closed intervals contained in $J$, which cover $F \cap J$ and such that for each $I \in \mathcal{J}$ the set $F\cap I$ is perfect  and the endpoints of $I$  lie in $F$. Additionally, we have to take care of the moreover-clause in the statement.

Assume that $J=(a,b)\in \mathcal{J}$. Let $c=\inf F\cap J$ and $d= \sup F \cap J$.  If $c \ne a$ and $d \ne b$, then we simply take $\mathcal{I}_J=\{[c,d]\}$.  Otherwise, suppose that $c \ne a$ and $d = b$.  In this case, using the fact that $F$ is perfect and nowhere dense, we can choose a sequence of open intervals $(c_1,d_1), (c_2,d_2), \ldots$ such that $c < c_n < d_n < c_{n+1}<d$ for $n=1,2,\ldots$ such that $d_n \to d$ and such that each $(c_n,d_n)$ is contiguous to $F$.  We then let
$\mathcal{I}_J=\{[c,c_1],[d_1,c_2],[d_2,c_3],\dots\}$.

It is easy to check that $\mathcal{I}_J$ has the desired properties in this case.  The cases where $c=a$ and $d \ne b$ and where $c=a$ and $d=b$ are handled similarly.
\end{proof}

Before setting out on the proof of \cref{nowhere dense perfect sets}, we state a few helpful definitions.

\begin{definition}[Functions $\Phi_I$ and $g_F$]\label{phi function} Given a bounded, open interval $I=(a,b)$, we define
\begin{equation*}
\Phi_I(x)=
\begin{cases}
   \min\{x-a,b-x\} & \mbox{if } x \in (a,b),\\
  0 & \mbox{if } x\notin (a,b).
\end{cases}
\end{equation*}
For intervals $I$ of the form $(-\infty,a)$ or $(a,\infty)$, where $a \in \field{R}$, we define
\begin{equation*}
\Phi_I(x)=
\begin{cases}
\abs{x-a} & \mbox{if } x \in I,\\
0  & \mbox{if } x \notin I.
\end{cases}
\end{equation*}
Given a bounded, nowhere dense perfect set $F\subset\field{R}$, we also define
\begin{equation*}
g_F=\sum_{I}\Phi_I,
\end{equation*}
where the sum is taken over all bounded intervals $I$ which are contiguous to $F$.
\end{definition}

\begin{proof}[Proof of \cref{nowhere dense perfect sets}]
First, given any bounded, nowhere dense perfect set $E$, we use \cref{nowhere dense perfect case} to find and fix a continuous function ${f_E\colon \field{R}\to \field{R}}$ such that $l_{f_E}^\infty=E$, the function $f_E$ is constant on intervals contiguous to $E$, vanishes on $\oped{-\infty}{\min(E)}\cup \clopen{\max(E)}{\infty}$, and $0 \le f_E \le 1$ on $\field{R}$.

One can readily prove the following useful observation:

\begin{observation}\label{g agrees with f}
Let $E$ be bounded, nowhere dense and perfect, and $f\colon\field{R}\to\field{R}$ be a continuous function that is constant on all intervals contiguous to~$E$ and satisfies $l^\infty_f = E$. Assume $g\colon\field{R} \to \field{R}$ satisfies the equality $g(x)=f(x)$ for all $x \in E$. Then we have
$\lip g(x)=\infty$ for all $x \in E$.
\end{observation}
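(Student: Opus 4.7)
The plan is to show that $q_f(x,r) \le q_g(x,r)$ for every $x \in E$ and every $r>0$; since $\lip f(x) = \infty$ for all $x \in E$ (because $l_f^\infty = E$), this immediately gives $\lip g(x) = \infty$ on $E$.

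Fix $x \in E$ and $r > 0$. Pick any $y \in [x-r,x+r]$. If $y \in E$, then by hypothesis $g(y) = f(y)$ and $g(x) = f(x)$, so $|f(y)-f(x)|/r = |g(y)-g(x)|/r \le q_g(x,r)$. The main case is $y \notin E$. Then $y$ belongs to some open interval $J$ contiguous to $E$, and since $x \in E$ we have $x \notin J$. I would then check that at least one endpoint of $J$ lies in $E \cap [x-r,x+r]$. If $J = (a,b)$ is bounded, then $a,b \in E$, and the only way neither $a$ nor $b$ could lie in $[x-r,x+r]$ is for $J$ to contain $[x-r,x+r]$ entirely, forcing $x \in J$, a contradiction. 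If instead $J$ is one of the two (possibly present) unbounded contiguous intervals $(-\infty,\min E)$ or $(\max E,\infty)$, the finite endpoint automatically lies in $[x-r,x+r]$ by comparing it to $x$ and $y$. Call such an endpoint $z \in E$. Since $f$ is constant on $J$ and continuous on $\overline{J}$, we have $f(z) = f(y)$; since $z \in E$ we also have $g(z) = f(z)$ and $g(x) = f(x)$. Therefore
\begin{equation*}
\frac{|f(y) - f(x)|}{r} \;=\; \frac{|f(z) - f(x)|}{r} \;=\; \frac{|g(z) - g(x)|}{r} \;\le\; q_g(x,r),
\end{equation*}
using $|z - x| \le r$.

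Taking the supremum over $y \in [x-r,x+r]$ yields $q_f(x,r) \le q_g(x,r)$ for every $r>0$. Passing to the $\liminf$ as $r \convFromRight 0$ and invoking \cref{O:propertiesofqlL}\cref{definition of lip} gives $\lip g(x) \ge \lip f(x) = \infty$, so $\lip g(x) = \infty$ for every $x \in E$, as required.

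The only nontrivial step is the endpoint-selection argument for $y \notin E$, and this is easily settled by the case analysis above, using that $J$ cannot contain $x$. No additional tools beyond continuity of $f$ and the hypothesis that $l_f^\infty = E$ are needed.
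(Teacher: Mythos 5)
Your proof is correct. The paper itself gives no proof of this observation (it is prefaced with ``One can readily prove the following useful observation''), so there is nothing to compare against; your argument supplies the natural missing details. The heart of the matter is exactly the endpoint-projection step you identify: for $y\in[x-r,x+r]\setminus E$ lying in a contiguous interval $J$, the endpoint $z$ of $J$ on the side of $x$ lies in $E$, satisfies $|z-x|\le r$, and carries the same $f$-value as $y$ because $f$ is constant on $J$ and continuous. This gives $q_f(x,r)\le q_g(x,r)$ for all $r>0$, and hence $\lip g(x)\ge\lip f(x)=\infty$ on $E$. Your case analysis (bounded vs.\ unbounded $J$) is sound, and the reduction to the easy case $y\in E$ is handled correctly.
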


Assume that $F=\bigcup_{n=1}^\infty F_n$, where each $F_n$ is nowhere dense and perfect.  We shall now construct a continuous function $f$ such that $l_f^\infty=F$.  We assume without loss of generality that each $F_n$ is bounded and that the sets $F_n$ are nested and differ: $F_n \subsetneq F_{n+1}$.

Now set $f_1=f_{F_1}$ and $g_1=g_{F_1}$.  We will construct $f$ in such a way that
${f_1 \le f \le f_1+g_1}$.  Since $g_1=0$ on $F_1$, \cref{g agrees with f} implies that $\lip f(x)=\infty$ for all $x  \in F_1$.

Using \cref{E subset F}, for each $n > 1$, we let
$\mathcal{I}_n=\mathcal{I}_{F_{n-1},F_n}$ be a pairwise disjoint collection of closed intervals $I=[a_I,b_I]$ satisfying
equations \cref{F backslash E,I intersect E,perfect intersection,a_I and b_I} with
$E=F_{n-1}$ and
$F=F_n$.  For each element $I \in \mathcal{I}_n$ we choose $O_I=(c_I,d_I)$ to be the open interval contiguous to $F_{n-1}$ that contains $I$ and define
$F_I=I \cap F_n$.  Then we choose $0<s_I\le \frac1{2^n}$ such that
\begin{equation}\label{s_I inequality}
s_I f_{F_I}+g_{F_I} \le \Phi_{O_I} \mbox{ on }I.
\end{equation}
Having already set $f_1=f_{F_1}$, we define for $n>1$
\begin{equation}\label{f_n and f_n-1}
f_n=\sum_{I \in \mathcal{I}_n}s_I f_{F_I}.
\end{equation}

Finally, we set $f=\sum_{n=1}^\infty f_n$.
Since each $f_n$ is continuous and $0 \le f_n \le \frac1{2^n}$ for $n > 1$, it follows that $f$ is continuous.  Note also that $f$ is constant on each open interval contiguous to  $\overline{F}$.  It remains to show that $l_f^\infty =F$.

We begin by showing that $\lip f(x)=\infty$ on $F$.   To that end, let
$x \in F$.  For notational convenience, we define
$\tilde{f}_n=\sum_{j=1}^n f_j$, so $f = \lim_{n \to \infty}\tilde{f}_n$. From \cref{s_I inequality} it follows that
\begin{equation*}
\tilde{f}_n\le\tilde{f}_{n+1}\le \tilde{f}_{n+1}+g_{{F}_{n+1}}\le
\tilde{f}_n+g_{{F}_n},
\end{equation*}
and therefore
\begin{equation}\label{tilde f_n}
\tilde{f}_n \le f \le \tilde{f}_n+g_{F_n}
\end{equation}
for all $n \in \field{N}$.  Notice also that for all $n$ and $k\in \field{N}$ we have that $f_{n+k}$ is 0 on ${F}_n$
and therefore $f=\tilde{f}_n$ on $F_n$.  Since $l_{\tilde{f}_n}^\infty=F_n$, it follows from \cref{g agrees with f} that
$\lip f(x)=\infty$ for all $x \in F$.

We are left with showing that $\lip f(x)<\infty$ for $x \in\overline{F}\setminus F$.  Assume that $x \in\overline{F}\setminus F$.  Then for each $n \in \field{N}$ there is an open interval $I_n=(a_n,b_n)$ that is contiguous to $F_n$ and that contains $x$.  Let $a=\lim_{n\to \infty} a_n$ and $b=\lim_{n\to \infty} b_n$ so we have
$a \le x \le b$.
Since $x\in \overline{F}$, we have either $a=x$ or $b=x$.  Note that $\tilde{f}_n$ is constant on $I_n$ and from \cref{tilde f_n}, we know that
$\tilde{f_n}\le f \le \tilde{f}_n+\Phi_{I_n}$ on $I_n$.  It follows that
$q_f(x,r_n)\le 2$ where $r_n = \min\{x-a_n,b_n-x\}$.

Since $r_n \to 0$, we get $\lip f(x)\le 2$, and we are done with the proof.
\end{proof}

\subsection{Meager \texorpdfstring{$F_{\sigma }$}{Fσ} sets}

In this section we improve \cref{nowhere dense perfect sets} by removing the requirement that the sets in the union be perfect. More precisely, we prove the following:

\newcommand{\pro}{Proposition}
\begin{proposition}\label{union of closed nowhere dense sets}
Let $F\subset \field{R}$ be a meager $F_{\sigma }$ set, that is it is the countable union of closed, nowhere dense sets. Then there is a continuous function $f\colon \field{R}\to \field{R}$ such that $l_f^\infty=F$ and such that $\Lip f$ is finite on $\field{R}\backslash \overline{F}$.
\end{proposition}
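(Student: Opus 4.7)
The plan is to mimic the proof of Proposition~\ref{nowhere dense perfect sets}, but with every appeal to Proposition~\ref{nowhere dense perfect case} replaced by a \emph{Sub-Lemma} that produces the analogous building block for any single bounded closed nowhere dense set $E$ (which may have isolated points). The Sub-Lemma itself is assembled from two tools already in hand: Proposition~\ref{nowhere dense perfect case} applied to the perfect kernel of $E$, and Corollary~\ref{countable modification} applied on each interval contiguous to that kernel.

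Concretely, to prove the Sub-Lemma, I will use the Cantor--Bendixson decomposition to write $E=P_E\cup S_E$ with $P_E$ perfect nowhere dense (possibly empty) and $S_E$ countable; since $P_E$ is closed, $S_E\subset \field{R}\setminus P_E$. Let $\phi_{P_E}$ be the function supplied by Proposition~\ref{nowhere dense perfect case} applied to $P_E$: continuous, with $l_{\phi_{P_E}}^\infty=P_E$, bounded in $[0,1]$, and constant on every open interval $O_j=(a_j,b_j)$ contiguous to $P_E$. For each $j$, Corollary~\ref{countable modification} applied to $S_E\cap O_j$ inside $O_j$ yields a continuous $\psi_j$ supported in $\overline{O_j}$ with $l_{\psi_j}^\infty=S_E\cap O_j$, $\Lip\psi_j$ finite off $\overline{S_E\cap O_j}$, and $0\le\psi_j\le\Phi_{O_j}$. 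Then I will set
\begin{equation*}
f_E = \phi_{P_E} + \sum_{j} \eta_j \psi_j
\end{equation*}
for some summable weights $\eta_j\in(0,1]$. The crucial pointwise estimate is that at any $x\in P_E$ and $y$ near $x$ lying in some $O_{j_0}$, one has $|y-x|\ge \Phi_{O_{j_0}}(y)\ge \psi_{j_0}(y)$ (because $x$ lies outside $O_{j_0}$), so $\sum_j\eta_j\psi_j(y)=\eta_{j_0}\psi_{j_0}(y)\le |y-x|$; hence $q_{\sum_j \eta_j\psi_j}(x,r)\le 1$ uniformly in $r$, giving $\lip f_E(x)\ge \lip\phi_{P_E}(x)-1=\infty$. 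At $x\in S_E\cap O_{j_0}$ the function $f_E$ locally equals a constant plus $\eta_{j_0}\psi_{j_0}$, so $\lip f_E(x)=\infty$; and off $E$, a localization together with Corollary~\ref{countable modification} gives $\Lip f_E<\infty$. I will also record the oscillation bound $|f_E-\phi_{P_E}|\le \eta_j\Phi_{O_j}$ on each $O_j$.

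With the Sub-Lemma in hand, the proof of Proposition~\ref{union of closed nowhere dense sets} follows the template of Proposition~\ref{nowhere dense perfect sets} essentially verbatim: take $F_n$ bounded and nested, set $f_1=f_{F_1}$ from the Sub-Lemma, and inductively define $f_n=\sum_{I\in\mathcal I_n}s_I f_{F_I}$ on each $I\in \mathcal I_n=\mathcal I_{F_{n-1},F_n}$, where the latter collection is supplied by a closed-set analog of \cref{E subset F} (proved in the same way, with minor care at isolated points of $F$ adjacent to $E$: such a point is covered by a short interval with endpoints in $F$, which can always be arranged because $F\setminus E$ is relatively open in $F$). Setting $f=\sum_n f_n$, the envelope inequalities $\tilde f_n\le f\le \tilde f_n + g_{F_n}$ go through as in the original proof and yield $l_f^\infty=F$ together with $\Lip f$ finite on $\field{R}\setminus\overline F$. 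The main obstacle I expect is maintaining this envelope bound through the induction: unlike in the perfect case, the Sub-Lemma's $f_E$ is only \emph{almost} constant on intervals contiguous to $E$ (within $\eta_j\Phi_{O_j}$ of a constant), so the weights $\eta_j$ in the Sub-Lemma and the weights $s_I$ in the iteration must be chosen small enough in coordination for the accumulated oscillation to stay dominated by $g_{F_n}$.
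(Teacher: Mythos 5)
Your route is genuinely different from the paper's. The paper proves this proposition without any iteration: its key device is \cref{basic partition}, which rewrites $F=(\bigcup_n P_n)\cup D$ with each $P_n$ perfect and nowhere dense, $D$ countable, and -- crucially -- $D\cap\overline{\bigcup_n P_n}=\emptyset$. That disjointness (obtained by absorbing into the $P_n$'s the scattered points accumulating on the perfect kernels) reduces the problem to adding two functions whose bad regions are well separated: $g$ from \cref{nowhere dense perfect sets} for $\bigcup_n P_n$, and $h$ built from \cref{countable modification} on each interval contiguous to $\overline{\bigcup_n P_n}$ for $D$. You instead build a Sub-Lemma block $f_E$ for every bounded closed nowhere dense $E$ and try to re-run the whole induction of \cref{nowhere dense perfect sets} at that level of generality.

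The obstacle you flag but do not close is, I think, a genuine gap. Your $f_E=\phi_{P_E}+\sum_j\eta_j\psi_j$ is not constant on intervals contiguous to $E$, and -- this is the real trouble -- it is not even uniformly Lipschitz there: the $\psi_j$ from \cref{countable modification} come from the construction of \cref{countable theorem}, whose linear pieces have slopes tending to infinity, so $\psi_j$ has arbitrarily large slopes on the contiguous intervals of $\overline{S_E\cap O_j}$, and a single scaling factor $\eta_j$ per $O_j$ cannot repair this. That control is exactly what the original induction relies on. For $x\in\overline F\setminus F$, the paper obtains $q_f(x,r_n)\le 2$ from the envelope $\tilde f_n\le f\le \tilde f_n+g_{F_n}$ together with the fact that $\tilde f_n$ is \emph{constant} on the interval $I_n\ni x$ contiguous to $F_n$; in your setting there is an extra term $q_{\tilde f_n}(x,r_n)$ governed by the oscillation of your building blocks on contiguous intervals of $F_n$, and you give no reason -- and I do not see one -- why that stays bounded as $n\to\infty$. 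Likewise \cref{g agrees with f} would need a quantitative replacement with an error term controlled by the same uniform slope bound you do not have. There are also smaller loose ends: the closed-set analogue of \cref{E subset F} needs a genuine argument where the original uses perfectness of $F$ to produce the sequence $(c_n,d_n)$ accumulating at the endpoints; and $f_E$ should vanish at $\min E$ and $\max E$, which requires some care if these are scattered points of $E$. The paper's \cref{basic partition} is precisely the device that makes all of these issues disappear.
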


In order to accomplish our goal of proving \cref{union of closed nowhere dense sets}, we will need some preliminary results that allow us to write a given $F_\sigma$ set specifically tailored to our use. This is the content of the next section.

\subsubsection{Auxiliary results}\label{Auxiliary results}

\begin{lemma}\label{basic partition I}
Let $F$ be an $F_\sigma$ set that is not countable nor closed. Then there are countably many perfect, nowhere dense sets ${\{P_n\}}_{n\in \field{N}}$, and a countable set $C$ such that
\begin{gather}
F=\bigcup_{n\in \field{N}} P_n\cup \interior{F} \cup C,\notag\\
C\cap \interior{F}=C\cap P_n=P_n\cap \interior{F}=\emptyset \quad \text{for all $n\in \field{N}$}\label{emptyness}.
\end{gather}
\end{lemma}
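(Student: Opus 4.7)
The plan is to apply the Cantor--Bendixson theorem separately to each closed piece of an $F_\sigma$ presentation of $F$, after first stripping off the interior of $F$. Write $F=\bigcup_{n\in\field{N}}F_n$ with each $F_n$ closed, and set $U\coloneqq\interior{F}$. The key preliminary observation is that $F\setminus U$ has empty interior in $\field{R}$: any open subset of $F$ lies in $U$ by definition of the interior, so it cannot meet $F\setminus U$. Consequently, for each $n$, the set $G_n\coloneqq F_n\setminus U = F_n\cap(\field{R}\setminus U)$ is closed (the intersection of two closed sets), contained in the interior-empty set $F\setminus U$, and therefore nowhere dense.

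Next, I would invoke the Cantor--Bendixson theorem on each $G_n$ to obtain a disjoint decomposition $G_n=P_n\cup D_n$, where $P_n$ is perfect (possibly empty) and $D_n$ is at most countable. Each $P_n$ is then a perfect, nowhere dense set, being a closed subset of the nowhere dense set $G_n$; and $P_n\cap U=\emptyset$ follows from $P_n\subset G_n\subset \field{R}\setminus U$. Set
\begin{equation*}
C \coloneqq \Bigl(\bigcup_{n\in\field{N}}D_n\Bigr)\setminus\Bigl(U\cup\bigcup_{n\in\field{N}}P_n\Bigr).
\end{equation*}
By construction $C$ is at most countable, and disjoint both from $U$ and from each $P_n$. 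The required decomposition then follows from the identity
\begin{equation*}
F = U\cup \bigcup_{n\in\field{N}}G_n = U\cup \bigcup_{n\in\field{N}}P_n \cup \bigcup_{n\in\field{N}}D_n = U\cup \bigcup_{n\in\field{N}}P_n \cup C,
\end{equation*}
since every element of $\bigcup_n D_n$ not placed in $C$ already belongs to $U\cup\bigcup_n P_n$ by the very definition of $C$.

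The only real obstacle is bookkeeping: the statement asks for a sequence $\{P_n\}_{n\in\field{N}}$ indexed by all of $\field{N}$, so if only finitely many of the $P_n$ produced above are nonempty, one may either pad the sequence by repetition or adopt the convention that the remaining $P_n$ are empty (the empty set being vacuously perfect and nowhere dense). The hypotheses that $F$ is neither countable nor closed serve precisely to rule out the trivial degenerate cases: if $F$ is uncountable then some $G_n$ is uncountable, and hence has nonempty perfect kernel by Cantor--Bendixson, ensuring that at least one $P_n$ is genuinely nonempty; and the assumption that $F$ is not closed ensures that $U$ alone does not already exhaust $F$ up to a single closed piece, so that the threefold decomposition is meaningful.
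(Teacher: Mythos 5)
Your proof is correct and follows essentially the same route as the paper: strip off $\interior{F}$, apply Cantor--Bendixson to each closed piece $F_n\setminus\interior{F}$, and collect the countable leftovers into $C$ after removing what already lies in $\bigcup P_n$. In fact you are slightly more careful than the paper, which omits the explicit check that the $P_n$ are nowhere dense, a verification you supply via the observation that $F\setminus\interior{F}$ has empty interior. One small caveat on your closing remark: it is not true that $F$ uncountable forces some $G_n$ to be uncountable (consider $F=(0,1)\cup\{2\}$, which is uncountable, $F_\sigma$, and not closed, yet every $G_n$ lies in $\{2\}$); the hypotheses merely exclude trivial cases, and the lemma remains valid there with all $P_n=\emptyset$, so this does not affect the correctness of the main argument.
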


\begin{proof}
As $F$ is an $F_\sigma$ set, there are countably many closed sets $F_n$ such that $F_n\subset F_{n+1}$ and
\begin{equation*}
F=\bigcup_{n\in \field{N}} F_n.
\end{equation*}

We let $O=\interior{F}$ be the interior of $F$. The sets $F_n\setminus O=F_n\cap (\field{R}\setminus O)$ are closed. By the Cantor--Bendixson theorem, there are   perfect sets $P_n$ and countable sets $C_n$ such that
\begin{equation*}
F_n\setminus O=P_n\cup C_n.
\end{equation*}
This means the following for $F$:
\begin{equation*}
F=\bigcup_{n\in \field{N}} (F_n\setminus O)\cup O=\bigcup_{n\in \field{N}} (P_n\cup C_n) \cup O.
\end{equation*}
We denote by $C$ the countable set $\bigcup_{n\in \field{N}}C_n\setminus (\bigcup_{n\in \field{N}} P_n)$. The equalities in \cref{emptyness} now follow from the definitions of the sets in question.
\end{proof}

In order to prove \cref{union of closed nowhere dense sets,F sigma}, we need a better version of  \cref{basic partition I}:

\begin{lemma}\label{basic partition}
Let $F$ be an $F_\sigma$ set that is not countable nor closed. Then there are countably many perfect, nowhere dense sets ${\{P_n\}}_{n\in \field{N}}$ and a countable set $D$ such that
\begin{gather}
F=\bigcup_{n\in \field{N}} P_n\cup \interior{F}\cup D,\notag\\
D\cap \interior{F}=D\cap \overline{\bigcup_{n\in\field{N}} P_n}=\bigcup_{n\in\field{N}}P_n\cap \interior{F}=\emptyset,\label{emptyness II}
\end{gather}
and the sets $P_n$ are nested for all $n\in \field{N}$, that is $P_n\subset P_{n+1}$.
\end{lemma}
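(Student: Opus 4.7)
The plan is to refine the decomposition from \cref{basic partition I}: starting with perfect nowhere dense sets $\{P_n\}$ and a countable set $C$ satisfying the relations in \cref{emptyness}, I first make the sets nested by passing to $P_n' := \bigcup_{k=1}^n P_k$. A finite union of closed perfect nowhere dense sets is itself closed, perfect, and nowhere dense, and the total union $\bigcup_n P_n$ and the set $C$ are unchanged, so the conclusion of \cref{basic partition I} carries over with the additional nesting property. Setting $K := \overline{\bigcup_n P_n'}$, the obstruction is that the naive choice $D := C$ fails whenever $C$ contains points of $K \setminus \bigcup_n P_n'$, that is, points of $C$ that accumulate on $\bigcup_n P_n'$ without lying in any single $P_n'$.

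To absorb these problematic points, for each $c \in C' := C \cap K$ I pick a sequence $p_{c,j} \to c$ with $p_{c,j} \in P_{m_{c,j}}'$. Since $c$ lies in no single $P_n'$ (closed), the indices $m_{c,j}$ are necessarily unbounded and, after passing to a subsequence, may be taken strictly increasing. For each such $p_{c,j}$ I then pick $\varepsilon_{c,j} < 1/j$ with $p_{c,j} \pm \varepsilon_{c,j} \notin P_{m_{c,j}}'$ (possible since $P_{m_{c,j}}'$ is nowhere dense) and define
\[
Q_{c,j} := P_{m_{c,j}}' \cap [p_{c,j} - \varepsilon_{c,j},\, p_{c,j} + \varepsilon_{c,j}].
\]
Because the endpoints avoid $P_{m_{c,j}}'$, the set $Q_{c,j}$ is closed; and since every point of $Q_{c,j}$ lies in the open interval $(p_{c,j} - \varepsilon_{c,j}, p_{c,j} + \varepsilon_{c,j})$ and is a limit point of $P_{m_{c,j}}'$, it follows that $Q_{c,j}$ is perfect and nowhere dense. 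Enumerating $C' = \{c_1, c_2, \dots\}$, I finally define
\[
\tilde{P}_n := P_n' \cup \{c_k : k \le n\} \cup \bigcup_{k \le n}\bigcup_{j \in \field{N}} Q_{c_k, j}.
\]

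The verification proceeds in four steps. Nesting is immediate. Closedness follows because the diameters of the $Q_{c_k, j}$ tend to $0$ as $j \to \infty$, so the only new accumulation point of $\bigcup_j Q_{c_k, j}$ is $c_k$, which is included. Perfectness holds because each $c_k$ is the limit of $p_{c_k, j} \in Q_{c_k, j}$ while every other point of $\tilde{P}_n$ belongs to some perfect piece (either $P_n'$ or some $Q_{c_k, j}$). Nowhere denseness follows from the inclusion $\tilde{P}_n \subset F \setminus \interior{F}$ combined with closedness, since $F \setminus \interior{F}$ has empty interior. Finally, since all new pieces lie in $K$, we get $\overline{\bigcup_n \tilde{P}_n} = K$; setting $D := C \setminus K$ then gives $D \cap \overline{\bigcup_n \tilde{P}_n} = \emptyset$, and the decomposition $F = \bigcup_n \tilde{P}_n \cup \interior{F} \cup D$, together with $D \cap \interior{F} = \bigcup_n \tilde{P}_n \cap \interior{F} = \emptyset$, follows from the corresponding identities for $\{P_n'\}$.

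The main technical obstacle is the closedness verification for $\tilde{P}_n$, which requires choosing the $\varepsilon_{c_k, j}$ small enough — relative to pairwise distances among the $c_k$'s, $p_{c_k, j}$'s, and previously chosen intervals — that distinct sets $Q_{c_k, j}$ and $Q_{c_{k'}, j'}$ do not accumulate on one another or on extraneous points. This is a routine, if slightly fiddly, shrinking argument that presents no conceptual difficulty.
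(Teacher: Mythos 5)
Your proposal is correct and follows essentially the same route as the paper: identify the problematic countable subset $C\cap\overline{\bigcup_n P_n}$, absorb each of its points into the perfect pieces by adjoining shrinking perfect fragments cut from the $P_k$'s that accumulate at it, and set $D$ to be the remaining (isolated) part of $C$. The only small difference is cosmetic (you nest first and then absorb, while the paper absorbs each $c_n$ into a single $P_n'$ and then takes finite unions), and your worry about a fiddly $\varepsilon$-shrinking step is unnecessary: taking $\varepsilon_{c,j}<1/j$ already forces $\bigcup_j Q_{c,j}\cup\{c\}$ to be closed, and a finite union of closed sets is closed regardless of overlaps.
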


\begin{proof}We assume that the representation of $F$ is already as in \cref{basic partition I}.  Let $C_1=C \cap(\overline{\bigcup_{n=1}^\infty P_n})$.
We only look at the case where $C_1$ is infinite and write $C_1=\{c_1,c_2,\dots\}$.  For each $c_n \in C_1$ we will construct a perfect, nowhere dense set $P_n'$ with the following properties:
\begin{gather}\label{Pn containment}
P_n \subset P_n'\subset F,\\
P_n'\subset C_1\cup\bigcup_{n=1}^\infty P_k\subset \overline{\bigcup_{k=1}^\infty P_k}\label{Pn' containment},\\
c_n \in P_n'\label{cn in pn'}.
\end{gather}

Defining $D=C\backslash C_1$ and replacing each $P_n$ with $\bigcup_{k=1}^n P'_k$, it is clear that \cref{basic partition} will follow from \cref{basic partition I} and
\eqref{Pn containment} to~\eqref{cn in pn'}.

We now proceed with the construction of $P_n'$.  Using that $c_n \in \overline{\bigcup_{k=1}^\infty P_k}$ and that each $P_k$ is perfect and nowhere dense, we find a subsequence $\{P_{k_i}\}$ of $\{P_k\}$ and intervals $I_j=[a_j,b_j]$, such that
\begin{gather}\label{I_j pairwise disjoint}
I_j \cap I_i =\emptyset \text{ if } j \ne i,\\
I_j \cap P_{k_j} \text{ is perfect}\label{I_j intersect perfect},\\
a_j \to c_n\label{aj limit cn}.
\end{gather}

Defining $P_n' = P_n \cup(\bigcup_{j=1}^\infty (I_j \cap P_{k_j}))\cup \{c_n\}$, it is straightforward to check that $P_n'$ is perfect, nowhere dense, and that  \eqref{Pn containment} to \cref{cn in pn'} all hold, completing the proof.
\end{proof}

\subsubsection{Proof of \texorpdfstring{\cref{union of closed nowhere dense sets}}{\pro~\ref{union of closed nowhere dense sets}}}
\begin{proof}[Proof of \cref{union of closed nowhere dense sets}]
Assume that $F$ is the countable union of closed, nowhere dense sets.  Then, by \cref{basic partition}, we have
\begin{equation*}
  F=(\bigcup_{n=1}^\infty P_n) \cup D,
\end{equation*}
where each $P_n$ is perfect and nowhere dense, $D$ is countable and the intersection
${\overline{\bigcup_{n=1}^\infty P_n}\cap D}$ is empty.  Let $P=\overline{\bigcup_{n=1}^\infty P_n}$.  Then we can write $\field{R}\backslash P$ as the countable union of pairwise disjoint open intervals: $\field{R}\backslash P =\bigcup_{n=1}^\infty (a_n,b_n)$.  Using \cref{countable modification} for each $n \in \field{N}$ we find a continuous $h_n \colon \field{R}\to \field{R}$ such that
\begin{gather}\label{l_f_n and D}
l_{h_n}^\infty = D \cap (a_n,b_n),\\
0 \le h_n(x) \le \min\{x-a_n,b_n-x\} \text{ for all }x \in (a_n,b_n),\label{f_n inequality}\\
\label{Lip h_n finite} \Lip h_n \text{ is finite on } (a_n,b_n)\backslash\overline{D},
\intertext{and}
h_n(x)=0 \text{ for all }x \notin (a_n,b_n).\label{h_n zero}
\end{gather}
We let $h=\sum_{n=1}^\infty h_n$ and note that it follows from \cref{l_f_n and D,f_n inequality,Lip h_n finite,h_n zero} that $h$ is continuous on $\field{R}$, $l_h^\infty =D$, and $\Lip h$ is finite on $\field{R}\backslash\overline{D}$.   Now, using \cref{nowhere dense perfect sets}, we choose a continuous function $g \colon \field{R}\to \field{R}$ satisfying $l_g^\infty = \bigcup_{n=1}^\infty P_n$ and that $\Lip g$ is finite on $\field{R}\backslash \overline{\bigcup_{n=1}^\infty P_n}$ and therefore finite on $\field{R}\backslash \overline{F}$.

We claim that $f=g+h$ has the desired properties.

First note that since $\field{R}\backslash\overline{F}\subset \field{R}\backslash\overline{D}$, it follows that $\Lip h$ is finite on $\field{R}\backslash\overline{F}$ and thus, both $\Lip h$ and $\Lip g$ are finite on $\field{R}\backslash\overline{F}$.   Therefore, $\Lip f$ is finite on $\field{R}\backslash\overline{F}$, as required.

    To conclude the proof we need to show that $l_f^\infty =F$. Since $g$ is constant on each $(a_n,b_n)$, and each $h_n$ is constant on $\field{R}\backslash(a_n,b_n)$, it follows from \cref{l_f_n and D} that $l_f^\infty \cap (\field{R}\backslash P)=D$.
It remains to verify that $l_f^\infty \cap P = \bigcup_{n=1}^\infty P_n$. This follows easily from the fact that $l_g^\infty=\bigcup_{n=1}^\infty P_n$ and that $\Lip h(x)<\infty $ on $\field{R}\backslash (\bigcup_{n=1}^\infty (a_n,b_n))$; consequently we have finished the proof.
\end{proof}

\subsection{Union of closed sets}
The proof of \cref{F sigma} now follows rather easily from \cref{union of closed nowhere dense sets} and the following two lemmas.

\begin{lemma}\label{lip infinite on open interval}
Given any open interval $(a,b)$ and $h>0$ there exists a continuous function $f\colon \field{R}\to\field{R}$ such that

\begin{gather}\label{f is zero}
f=0 \text{ on } \field{R}\backslash (a,b),\\
0 \le f(x) \le h\cdot \min\{x-a,b-x\} \text{ for } x \in (a,b)\label{f is dominated},\\
\lip f(x)=\infty \text{ for all } x \in (a,b).\label{lip f is infinite}
\end{gather}
\end{lemma}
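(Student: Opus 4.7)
The plan is to produce $f$ as a product $f(x)=h\cdot d(x)\cdot g(x)$, where $d(x)=\min(x-a,b-x)$ on $[a,b]$ (extended by zero outside) and $g\colon[a,b]\to[0,1]$ is a continuous function with $g(a)=g(b)=0$ and $\lip g(x)=\infty$ for every $x\in(a,b)$. Multiplication by $d$ enforces the pointwise bound $f\le hd$ and the vanishing on $\field{R}\backslash(a,b)$, while preserving the $\lip$-blow-up on $(a,b)$ because $d$ is $1$-Lipschitz and strictly positive there.

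After reducing to $(a,b)=(0,1)$ by an affine change of variables, I would build $g$ as a Weierstrass-type series $g(x)=9\sum_{n=1}^{\infty}g_n(x)$, where each $g_n\colon[0,1]\to[0,10^{-n}]$ is the piecewise-linear tent function of period $\ell_n=100^{-n}$ that vanishes at every point of $100^{-n}\field{Z}\cap[0,1]$ and attains its maximum $h_n=10^{-n}$ at the midpoints of those intervals. The heights $h_n$ are summable, so the series converges uniformly and $g$ is continuous; since $0$ and $1$ are partition points at every level, $g(0)=g(1)=0$, and the prefactor $9$ normalises $\sup g=1$. Each $g_n$ is piecewise linear with slope magnitude $2\cdot 10^n$.

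The heart of the argument is verifying $\lip g(x)=\infty$ for every $x\in(0,1)$. By \cref{lip and subsequences} (applied with $k=100$) it suffices to prove $q_g(x,100^{-N})\to\infty$. At scale $r=100^{-N}$ the window $[x-r,x+r]$ spans a full period of $g_N$, so I can pick $y^*\in[x-r,x+r]$ with $|g_N(y^*)-g_N(x)|\ge h_N/2$. For this $y^*$, the low-frequency terms contribute at most $\sum_{n=1}^{N-1}2\cdot 10^n\cdot r\le \tfrac{2}{9}\cdot 10^{-N}$ by Lipschitz, and the high-frequency terms contribute at most $\sum_{n>N}h_n=\tfrac{1}{9}\cdot 10^{-N}$ by the $L^\infty$ bound, giving
\begin{equation*}
|g(y^*)-g(x)|\ge 9\bigl(\tfrac{1}{2}-\tfrac{2}{9}-\tfrac{1}{9}\bigr)\cdot 10^{-N}=\tfrac{3}{2}\cdot 10^{-N}.
\end{equation*}
Dividing by $r$ gives $q_g(x,r)\ge\tfrac{3}{2}\cdot 10^N\to\infty$ as $N\to\infty$.

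Finally, for $f(x)=h\cdot d(x)\cdot g(x)$ the identity
\begin{equation*}
f(y)-f(x)=h\bigl[d(x)\bigl(g(y)-g(x)\bigr)+g(y)\bigl(d(y)-d(x)\bigr)\bigr],
\end{equation*}
combined with $|d(y)-d(x)|\le|y-x|$ and $0\le g\le 1$, yields $q_f(x,r)\ge h\,d(x)\,q_g(x,r)-h$, which tends to $\infty$ as $r\to 0$ whenever $d(x)>0$, that is, on all of $(a,b)$. The remaining properties ($f=0$ on $\field{R}\backslash(a,b)$, continuity, and $0\le f\le hd$) are immediate from the construction. The main obstacle is the non-cancellation estimate in the previous paragraph: the large oscillation of $g_N$ at scale $\ell_N$ must survive the interference from all the other $g_n$'s. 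The widely separated scales $\ell_n=100^{-n}$ paired with the gentler decay $h_n=10^{-n}$ are tuned exactly so that both the low-frequency Lipschitz tail and the high-frequency amplitude tail are strictly dominated by the main signal of size $h_N$ at the target scale.
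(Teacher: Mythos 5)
Your proof is correct, and it follows the same overall strategy as the paper — build an auxiliary function $g$ on $[0,1]$ with $\lip g\equiv\infty$ on $(0,1)$, then multiply by $h$ times the envelope $\Phi_{(a,b)}(x)=\min\{x-a,b-x\}$ to obtain the domination and the vanishing off $(a,b)$, with the product rule showing the $\lip$-blow-up survives since the envelope is $1$-Lipschitz and positive on the interior. Where you differ is in the construction of $g$. The paper defines $g$ recursively: starting from $g_0(x)=x$, at stage $n$ it replaces each linear piece of $g_{n-1}$ at dyadic-in-base-$3$ scale $3^{-(2n-1)}$ by a three-piece zigzag of triple slope, and takes $g=\lim g_n$; it then applies essentially the same $f=h\Phi\cdot\frac{1+g}{2}$. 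You instead build $g$ as a Weierstrass-type series $g=9\sum_n g_n$ of independent tent functions at widely separated scales $\ell_n=100^{-n}$ with heights $h_n=10^{-n}$ and verify $\lip g=\infty$ by isolating the dominant scale-$N$ signal and showing it overwhelms both the Lipschitz tail from lower frequencies and the sup-norm tail from higher frequencies. Your scale-separation estimate is fully explicit (the paper's ``it is easy to verify that $q_g(x,3^{-2n})\ge 3^n/2$'' is left to the reader), and your series presentation avoids the bookkeeping of whether the recursive replacement happens on a piece of positive or negative slope. One minor slip: $\sup g$ is not exactly $1$ — the maxima of the different $g_n$ occur at different points — but you only need $0\le g\le 1$, which holds, so nothing breaks. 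Your identity $f(y)-f(x)=h[d(x)(g(y)-g(x))+g(y)(d(y)-d(x))]$ and the resulting bound $q_f(x,r)\ge h\,d(x)\,q_g(x,r)-h$ are the right way to pass the blow-up through the product, and they give $\lip f(x)=\infty$ precisely where $d(x)>0$, as required.
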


Note that if $f$ is as in the lemma, then we also have that $\Lip f$ is finite on $\field{R}\backslash (a,b)$.

\begin{proof}[Proof of~\cref{lip infinite on open interval}]
We start with a definition: If $f$ is linear on $[a,b]$ with $f(a)=c$ and $f(b)=d$, then we define $f^{[a,b]}\colon [a,b] \to \field{R}$ so that
$f^{[a,b]}$ is linear on each of the intervals $[a,a+\frac{b-a}3],[a+\frac{b-a}3,a+\frac{2(b-a)}3]$ and $[a+\frac{2(b-a)}3,b]$ and so that
\begin{equation*}
f^{[a,b]}(a)=f^{[a,b]}\biggl(a+\frac{2(b-a)}3\biggr)=c \text{ and } f^{[a,b]}\biggl(a+\frac{b-a}3\biggr)=f^{[a,b]}(b)=d.
\end{equation*}
We next define an auxiliary function $g$ on $[0,1]$ as follows:

We begin by setting $g_0(x)=x$ on $[0,1]$.  For each $n \in \field{N}$ we define
\begin{equation*}
I_{n,j}=\biggl[\frac{j}{3^n},\frac{j+1}{3^n}\biggr] \text{ for } j\in\{0,1,\dots,3^n-1\}.
\end{equation*}
Next we define a sequence of functions $\{g_n\}$ recursively on $[0,1]$ so that
\begin{equation*}
g_n|_{I_{2n-1,j}}=g^{I_{2n-1,j}}_{n-1} \text{ for all } n \in \field{N} \text{ and } j\in\{0,1,\dots, 3^{2n-1}-1\}.
\end{equation*}
Then since each $g_n$ is continuous on $[0,1]$ and $\norm{g_n-g_{n-1}}\le 2\cdot 3^{-n}$ for each $n \in \field{N}$, it follows that
$g=\lim_{n\to \infty} g_n$ is continuous on $[0,1]$.  It is also easy to verify that $q_g(x,3^{-2n})\ge 3^n/2$ for each $x \in (0,1)$ and for each $n \in \field{N}$.  It follows from \cref{lip and subsequences} that $\lip g=\infty$ on $(0,1)$.  Finally, given an interval $[a,b]$, we define
\begin{equation*}
f(x)=
\begin{cases}
   h\Phi_{(a,b)}(x) \frac{1+g(\frac{x-a}{b-a})}2 & \mbox{if } x \in (a,b),\\
  0 & \mbox{if } x\notin (a,b).
\end{cases}
\end{equation*}
It is easy to verify that $f$ has the desired properties.
\end{proof}

\begin{lemma}\label{open set case}
Given an open set $O \subset \field{R}$, there exists a continuous function $f \colon \field{R} \to \field{R}$ such that
$l_f^\infty =O$ and $\Lip f(x) < \infty$ for all $x \in \field{R} \backslash O$.
\end{lemma}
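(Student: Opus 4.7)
The plan is to decompose $O$ into its connected components, apply \cref{lip infinite on open interval} to each (with a minor adaptation for unbounded components), and sum the resulting functions, taking care that the constants give both uniform convergence and a finite Lipschitz bound off $O$.

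Write $O = \bigsqcup_n I_n$, where the $I_n$ are the (at most countably many) connected components of $O$---each an open interval, with at most two of them unbounded. For each bounded component $I_n = (a_n, b_n)$, I apply \cref{lip infinite on open interval} with a small constant $h_n > 0$, say $h_n := 2^{-n}/\max\{1, b_n - a_n\}$, to obtain a continuous $f_n \colon \field{R}\to\field{R}$ vanishing off $(a_n,b_n)$, satisfying $0 \le f_n(y) \le h_n \min\{y - a_n, b_n - y\}$ on $I_n$, and $\lip f_n(x) = \infty$ for all $x \in I_n$. For each unbounded component I perform an analogous construction yielding $f_n$ continuous, vanishing off $\overline{I_n}$, with $\lip f_n = \infty$ on $I_n$, $\|f_n\|_\infty \le 2^{-n}$, and the linear bound $|f_n(y)| \le h_n |y - e_n|$ near the finite endpoint $e_n$ of $I_n$ (if one exists). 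This can be done, for instance, by conjugating the construction from \cref{lip infinite on open interval} by a smooth diffeomorphism between $I_n$ and a bounded open interval (checking that such a conjugation preserves $\lip = \infty$, which follows since the diffeomorphism has derivative bounded away from $0$ on compact subsets), or alternatively by partitioning $I_n$ into unit bounded sub-intervals, applying \cref{lip infinite on open interval} to each, and installing $\lip = \infty$ at the split points using \cref{countable theorem}.

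Set $f := \sum_n f_n$. Uniform convergence follows from $\sum_n \|f_n\|_\infty < \infty$, so $f$ is continuous on $\field{R}$. To verify $\lip f = \infty$ on $O$, take $x \in O$ and let $I_{n_0}$ be its unique component; pick $\rho > 0$ with $[x - \rho, x + \rho] \subset I_{n_0}$. By disjointness of components, $f_m \equiv 0$ on $[x-\rho, x+\rho]$ for every $m \ne n_0$, so for $r \in (0, \rho)$ we have $q_f(x, r) = q_{f_{n_0}}(x, r)$, and hence $\lip f(x) = \lip f_{n_0}(x) = \infty$.

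To verify $\Lip f < \infty$ off $O$, fix $x \notin O$; since $x \notin I_n$, we have $f_n(x) = 0$ for every $n$. The key estimate is $|f_n(y)| \le h_n |y - x|$ for all $y \in \field{R}$, which holds because $|f_n(y)|$ is bounded by $h_n$ times the distance from $y$ to the nearest finite endpoint of $I_n$, and since $x \notin I_n$ that endpoint either lies on the segment from $x$ to $y$ or coincides with $x$. Summing gives $|f(y) - f(x)| \le \bigl(\sum_n h_n\bigr) |y - x|$, so $\Lip f(x) \le \sum_n h_n < \infty$. The main obstacle is the unbounded-component case: adapting \cref{lip infinite on open interval} so that $\lip f_n = \infty$ holds on an entire unbounded interval while preserving the linear bound at the finite endpoint requires a modest additional argument, but either the diffeomorphism trick or the partition-plus-countable-correction approach just sketched delivers what is needed.
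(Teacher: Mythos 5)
Your proof is correct and follows the same overall strategy as the paper: decompose $O$ into its connected components, apply \cref{lip infinite on open interval} to each, and sum. The paper sets $h=1/2$ for every component and concludes $\Lip f(x)\le 1$ off $O$ directly from the pointwise bound $|f(y)|\le \tfrac12\,\operatorname{dist}(y,\field{R}\setminus O)$; uniform convergence is not needed because the $f_n$ have pairwise disjoint open supports, so the sum is locally a single term and continuity off $O$ follows from the Lipschitz bound at those points. Your decaying choice $h_n=2^{-n}/\max\{1,b_n-a_n\}$ is therefore not required in the bounded case, though it of course does no harm.

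Where you go beyond the paper is in explicitly treating unbounded components, a case the paper's proof silently skips (the proof of \cref{lip infinite on open interval} rescales an auxiliary function on $[0,1]$ by $\frac{x-a}{b-a}$, which presupposes $a$ and $b$ finite, yet $O=\interior F$ in \cref{F sigma} can certainly be unbounded). Both of your workarounds are viable. For the conjugation route, the precise fact you need is that if $\phi$ is $C^1$ with $\phi'(x_0)>0$, then $\lip(g\circ\phi)(x_0)\ge (1-\varepsilon)\phi'(x_0)\lip g(\phi(x_0))$ for every $\varepsilon>0$, which does give $\lip(g\circ\phi)=\infty$ wherever $\lip g=\infty$; your phrasing in terms of compact subsets is a slight detour but points at the same pointwise positivity of $\phi'$. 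Choosing $\phi$ Lipschitz (e.g.\ $\phi(t)=1-e^{-t}$ on $(0,\infty)$) also preserves the linear bound $|f_n(y)|\le h_n|y-e_n|$ near the finite endpoint $e_n$, which is what your $\Lip$ estimate off $O$ relies on. Net effect: your argument is sound, and in fact patches a small gap in the paper.
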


\begin{proof}
Let $O$ be equal to $\bigcup_n (a_n,b_n)$, where the intervals $(a_n,b_n)$ are pairwise disjoint.  For each $n$ we construct a continuous function $f_n \colon \field{R} \to \field{R}$ satisfying \cref{f is zero,f is dominated,lip f is infinite} with $a$ and $b$ replaced with $a_n$ and $b_n$, $f$ replaced with $f_n$ and $h$ replaced by $1/2$. Letting $f=\sum_n f_n$ it is easy to verify that $l_f^\infty=O$ and $\Lip f(x) \le 1$ for all $x \in \field{R}\backslash O$.
\end{proof}

\begin{proof}[Proof of \cref{F sigma}]
Let $F$ be a countable union of closed sets and define $O=\interior{F}$ and $E=F\backslash O$.  It follows that $E$ is a countable union of nowhere dense closed sets, so by \cref{union of closed nowhere dense sets} there exists a continuous function $g$ such that $l_g^\infty=E$ and $\Lip g$ is finite on $\field{R}\backslash \overline{E}$.  Moreover, using \cref{open set case}, we can find a continuous function $h$ such that $l_h^\infty=O$ and $\Lip h$ is finite on $\field{R}\backslash O$.  Let $f=g+h$.  Since $\lip g=\infty$ on $E$ and $\Lip h<\infty$ on
$E \subset \field{R}\backslash O$, we see that $\lip f=\infty$ on $E$.  Similarly, since $\Lip g <\infty$ on $O \subset \field{R}\backslash \overline{E}$ and $\lip h=\infty$ on $O$, we have $\lip f=\infty$  on $O$.  Finally, since $\lip g < \infty$ on $\field{R}\backslash F \subset \field{R}\backslash E$ and $\Lip h<\infty$ on $\field{R}\backslash F \subset \field{R}\backslash O$, we have $\lip f < \infty$ on $\field{R}\backslash F$, finishing the proof.
\end{proof}

\subsection{The \texorpdfstring{$G_\delta$}{Gδ} case}
In this section, we prove the following result:

\begin{theorem}\label{g delta theorem}
For every $G_\delta$ set $E\subset\field{R}$, there exists a continuous function $f$ such that $L_f^\infty=l_f^\infty=E$.
\end{theorem}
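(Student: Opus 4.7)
Plan. Since $l_f^\infty\subset L_f^\infty$ holds for every function, the equality $L_f^\infty=l_f^\infty=E$ is equivalent to the two conditions $\lip f(x)=\infty$ for $x\in E$ and $\Lip f(x)<\infty$ for $x\notin E$. So this result strengthens \cref{F sigma} (applied to the $F_\sigma$ set $F=E^c$) by additionally controlling $\Lip f$ off $E$.

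Decomposition. I would first split $E=O\cup E'$, where $O=\interior E$ is open and $E'=E\setminus O$ is a $G_\delta$ set with empty interior. For the open piece $O$, \cref{open set case} immediately provides a continuous $g$ with $L_g^\infty=l_g^\infty=O$ and $\Lip g<\infty$ off $O$. The bulk of the proof is to construct a continuous $h$ with $L_h^\infty=l_h^\infty=E'$ and $\Lip h$ finite off $E'$; setting $f=g+h$ then completes the proof. Indeed, since $E'\subset O^c$ and $O^c$ is closed, $\overline{E'}\cap O=\emptyset$, so $\Lip g$ is finite on $E'$ while $\Lip h$ is finite on $O$. Combined with $\lip g=\infty$ on $O$ and $\lip h=\infty$ on $E'$, a routine check gives $\lip f=\infty$ on $E$ and $\Lip f<\infty$ on $\field{R}\setminus E$.

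Construction for $E'$. Write $E'=\bigcap_n O_n$ with $O_n$ open, bounded and $O_{n+1}\subset O_n$, and set $F_n=\field{R}\setminus O_n$. I would build $h$ as the uniform limit of a sequence of continuous piecewise-linear functions $h_n$, produced by an iterative refinement in the spirit of the successor construction of \cref{countable theorem}. At step $n$, inside each component of $O_n$ meeting $E'$, new breakpoints are inserted following a scheme modelled on \cref{dyadic 2 index}, doubling the local oscillation at scale $\approx 4^{-n}$ near every point of $E'\cap O_n$, while the newly introduced slopes stay below a prescribed bound $M_n$ and $\|h_n-h_{n-1}\|_\infty$ remains summable. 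For $x\in E'$, every scale $4^{-n}$ sees oscillation of order $1$ from $h_n$, yielding $q_h(x,4^{-n})\gtrsim 4^n\to\infty$, which by \cref{lip and subsequences} gives $\lip h(x)=\infty$. For $x\notin E'$, some $F_n$ contains a neighborhood of $x$; after step $n$ no new breakpoint enters that neighborhood, so the slopes there remain bounded by $M_n$ and $\Lip h(x)\le M_n<\infty$.

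Main obstacle. The principal difficulty is to perform the refinement uniformly across the entire uncountable set $E'$, rather than one point at a time as in \cref{countable theorem}. For every component $I$ of $O_n$ meeting $E'$, one must place the new breakpoints so that \emph{every} $x\in I\cap E'$ experiences the refinement at scale $4^{-n}$; because the geometry of $E'$ inside $I$ is arbitrary, I expect this will require passing to a carefully chosen subsequence of the $O_n$ which quantifies the approach of $F_n$ to $E'$, combined with a uniform two-index lemma analogous to \cref{dyadic 2 index}. Simultaneously balancing the amplitudes, the slope bounds $M_n$, and the summability of $\|h_n-h_{n-1}\|_\infty$ so as to force $q_h(x,r)\to\infty$ on $E'$ while keeping $\Lip h$ bounded off $E'$ is the main technical work.
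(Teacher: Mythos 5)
Your decomposition of $E$ into its interior $O$ and the residual empty-interior $G_\delta$ set $E'$, with $g$ from \cref{open set case} handling $O$, matches exactly the paper's final reduction step, and your remarks about why $f=g+h$ then works are correct. The problem is the construction of $h$ for the empty-interior piece $E'$: you explicitly leave open the ``main technical work,'' and the mechanism you sketch does not obviously work. The refinement scheme of \cref{dyadic 2 index} and \cref{nowhere dense perfect case} is tailored to a \emph{closed}, perfect target set: one knows once and for all which dyadic intervals strongly intersect the set, and one inserts contiguous-interval jumps accordingly. For a general $G_\delta$ set $E'=\bigcap_n O_n$ the set of points where oscillation must blow up is not closed, the components of $O_n$ containing a given $x\in E'$ shrink in a way that varies wildly with $x$, and there is no analogue of the two-index bookkeeping that guaranteed the oscillation in the perfect-set case. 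Saying ``I expect this will require passing to a carefully chosen subsequence\dots combined with a uniform two-index lemma'' identifies the obstacle but does not surmount it; that is precisely the content of the theorem.

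The paper's route is genuinely different and worth noting. Rather than iteratively inserting breakpoints, it writes $h=\sum_n f_n$ where $f_n=\sum_{I\in\mathcal I_n}\Psi_{I,n}$ is a sum of sawtooth functions with slopes $\pm 5^n$ supported on the components $I$ of $G_n$, each dominated by $\Phi_I/5^n$. Two features make this work where a naive sum would not. First, the amplitudes decay like $5^{-n}\cdot\abs{I}$, so that off $E$ the tail $\sum_{n>K}f_n$ is dominated by a Lipschitz function vanishing at $x$, giving $\Lip h(x)<\infty$ immediately; no breakpoint bookkeeping is needed. Second, the approximating sequence $G_n$ is not arbitrary: \cref{sparse G delta} arranges that each component of $G_{n+1}$ contains at most one breakpoint of the level-$n$ sawtooths, and this sparseness is exactly what prevents the level-$(n+1)$ sawtooth from cancelling the level-$n$ oscillation, so that $q_h(x,r)$ stays large for \emph{all} small $r$ (not just along the geometric sequence $r_n$). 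You would need some replacement for this sparseness lemma; without it, your proposal has a genuine gap at the step where you claim $q_h(x,4^{-n})\gtrsim 4^n$ uniformly over $x\in E'$ while keeping slopes bounded off $E'$.
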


We begin with a few definitions:

First of all, for each interval $I$ and each $n \in \field{N}$ we define the function~$\Phi_{I,n}$ by $\Phi_{I,n}(x)=\Phi_I(x)/5^n$, where $\Phi_I$ is as previously defined in \cref{phi function}.
Let $I=(a,b)$ be an interval and $n \in \field{N}$.  A basic building block in the construction of the function $f$ advertised in \cref{g delta theorem}  will be the function $\Psi_{I,n}(x)$ that we now proceed to define.

We begin by defining a countable set of points
\begin{equation}\label{*bin*}
B_{I,n}={\{x_k\}}_{k\in \field{Z}}
\end{equation}
 contained in $I$.  First set $x_0=\frac{a+b}2$ and $x_1=\frac{a+b}2+\frac{b-a}{2\cdot 5^{2n}}$.  For $k \in \field{N}$, let $x_{2k+1}=x_{2k-1}+\frac{b-a}{5^{2n}}(\frac{5^{2n}-1}{5^{2n}+1})^k$ and $x_{2k}=\frac{x_{2k-1}+x_{2k+1}}2$ and set $x_{-k}=a+b-x_k$.  A bit of calculation should now convince the reader that $x_k < x_{k+1}$ for all $k\in\field{Z}$,  $\lim_{k\to \infty}x_k=b$ and
$\lim_{k\to \infty}x_{-k}=a$.

Now define $\Psi=\Psi_{I,n}\colon\field{R}\to \field{R}$ to be the unique continuous function with the following properties:
$\Psi(x)=0$ for all $x \notin I$, $\Psi(x_{2k+1})=0$ for all $k \in \field{Z}$, the function~$\Psi$ is linear with slope $5^n$ on each interval
$[x_{2k+1},x_{2k+2}]$ and linear with slope $-5^{n}$ on each interval $[x_{2k},x_{2k+1}]$.  Note that the graph of $\Psi|_I$ consists of countably many straight line segments of slope $\pm 5^n$.

\begin{figure}
\centering
\includegraphics{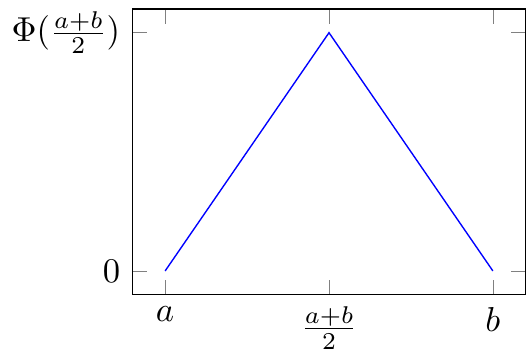}
\quad
\includegraphics{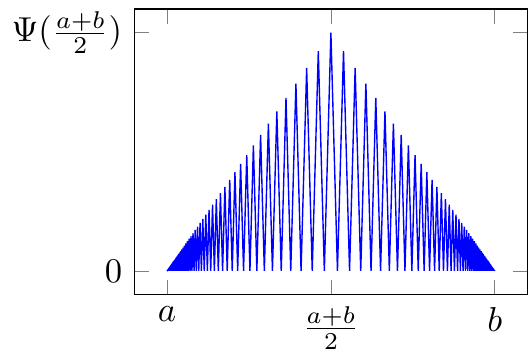}
\caption{$\Phi_{I,n}$ and $\Psi_{I,n}$}
\label{Phi and Psi}
\end{figure}

One can also verify that
\begin{equation}\label{psi phi inequality}
0 \le \Psi_{I,n}(x)\le \Phi_{I,n}(x) \mbox{ for all } x \in \field{R}.
\end{equation}

Additionally, we have
\begin{equation}\label{psi phi equality}
\Psi_{I,n}(x_{2k})=\Phi_{I,n}(x_{2k}) \text{ for all } k \in \field{Z}.
\end{equation}

For future reference we record the following useful observations:
\begin{gather}\label{max x_j+1-x_j}
\max_j(x_{j+1}-x_j)=x_1-x_0=\frac{b-a}{2\cdot5^{2n}},\\
\frac{12}{13}\le\frac{5^{2n}-1}{5^{2n}+1}\le\frac{x_{j+2}-x_{j+1}}{x_{j+1}-x_j}\le\frac{5^{2n}+1}{5^{2n}-1}\le\frac{13}{12}\label{x_j x_j+1 ratio},\\
\abs{\frac{\Psi(x_{2j+2})}{\Psi(x_{2j})}}=
\begin{cases}
            \frac{5^{2n}-1}{5^{2n}+1} &  j\ge 0, \\
            \frac{5^{2n}+1}{5^{2n}-1} & j<0.
\end{cases}\label{Psi ratio}
\end{gather}

Note that from  (\ref{psi phi inequality}), (\ref{psi phi equality}), and the definition of $\Phi_{I,n}$ we get
\begin{equation}\label{sup Psi_n}
\Psi_{I,n}(x) \le \Psi_{I,n}(x_{2j})+\frac{\abs{x-x_{2j}}}{5^n} \text{ for all }x \in I.
\end{equation}

Given a collection of open sets $\{G_1,G_2,\ldots\}$ for each $n \in \field{N}$ we let ${\mathcal I}_n$ be the unique collection of pairwise disjoint open intervals whose union is $G_n$ so $G_n=\bigcup_{I \in {\mathcal I}_n}I$.  Given such a collection,
by using \eqref{*bin*} and the definition after it we define:
\begin{equation}\label{X_n}
X_n=\bigcup_{I \in {\mathcal I}_n}B_{I,n}.
\end{equation}

The following simple \lcnamecref{sparse G delta} is easily proved by induction.

\begin{lemma}\label{sparse G delta}
Let $E$ be a $G_\delta$ set with empty interior.  Then we can find open sets $G_1,G_2,\ldots$
such that
\begin{gather}\label{E=intersection}
E=\bigcap_{n=1}^\infty G_n,\\
G_{n+1}\subset G_n \mbox{ for all }n \in \field{N}\label{G_n contains G_n+1},\\
\sup_{I\in {\mathcal I}_1} |I| \le 1\label{sup I},\\
\mbox{and each }I\in {\mathcal I}_{n+1} \mbox{ contains at most one point of }X_n.\label{1 point}
\end{gather}
\end{lemma}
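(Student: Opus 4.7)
The plan is to build $\{G_n\}$ by induction on $n$, exploiting that the hypothesis $\interior{E}=\emptyset$ makes $\field{R}\setminus E$ dense; this lets me insert ``cut points'' lying outside $E$ wherever I want to split components. Fix once and for all any representation $E=\bigcap_{n\ge 1}H_n$ with $H_n$ open and $H_{n+1}\subset H_n$ (obtained by taking partial intersections of any initial $G_\delta$ representation of $E$).

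For $G_1$, in each component $(a,b)$ of $H_1$ I would choose a two-sided sequence $\{r_j\}\subset(a,b)\setminus E$ accumulating only at $a$ and $b$ with $r_{j+1}-r_j\le 1$; such points exist because $\field{R}\setminus E$ is dense. Removing the union of all chosen points from $H_1$ gives an open set $G_1\subset H_1$ containing $E$, whose components all have length at most $1$. For the inductive step, suppose $G_n$ has been constructed; in each component $I=(a,b)\in\mathcal{I}_n$ write $B_{I,n}=\{x_k\}_{k\in\field{Z}}$ in increasing order so that $x_k\to b$ and $x_{-k}\to a$, and for every $k\in\field{Z}$ pick $q_k^I\in(x_k,x_{k+1})\setminus E$, again available by density. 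Setting $Q_n=\bigcup_{I\in\mathcal{I}_n}\{q_k^I:k\in\field{Z}\}$, define
\[
G_{n+1}\coloneqq(G_n\cap H_{n+1})\setminus Q_n.
\]

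Verification of the four conclusions is then straightforward. Each sequence $\{q_k^I\}_{k\in\field{Z}}$ accumulates only at $\partial I$ (because $\{x_k\}$ does), so $Q_n$ is closed and discrete in $G_n$, and hence $G_{n+1}$ is open. By construction $G_{n+1}\subset G_n\cap H_{n+1}\subset G_n$, and $Q_n\cap E=\emptyset$ together with $E\subset G_n\cap H_{n+1}$ yields $E\subset G_{n+1}$; iterating gives $E\subset\bigcap_n G_n\subset\bigcap_n H_n=E$. Finally, any component $J$ of $G_{n+1}$ is contained in some $I\in\mathcal{I}_n$ and fits inside a maximal subinterval of $I\cap H_{n+1}$ disjoint from $Q_n$, hence inside some $(q_{k-1}^I,q_k^I)$; that interval meets $B_{I,n}$ only at $x_k$, so $J$ contains at most one point of $X_n$.

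The only delicate step is preserving openness of $G_{n+1}$, which requires $Q_n$ to have no accumulation points inside $G_n$. This is automatic from the explicit way the $x_k$'s were defined earlier, since the points $x_k$ (and therefore the $q_k^I$ chosen between consecutive ones) accumulate only at $\partial I\subset\field{R}\setminus G_n$; so no further tuning of the cut points is needed.
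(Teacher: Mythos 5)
Your proof is correct, and it fills in precisely the inductive argument the paper alludes to but does not write out (the paper only remarks that the lemma ``is easily proved by induction''). The key ideas are all in place: you start from any nested $G_\delta$ representation $E=\bigcap_n H_n$; you exploit $\interior E=\emptyset$ to obtain a dense complement from which cut points can be drawn; at stage $1$ you insert cut points to bound component lengths by $1$; at stage $n+1$ you insert one cut point $q_k^I\in(x_k,x_{k+1})\setminus E$ between consecutive points of each $B_{I,n}$, and take $G_{n+1}=(G_n\cap H_{n+1})\setminus Q_n$. Your openness argument is sound because the points $x_k$ (hence the $q_k^I$) accumulate only at $\partial I$, so $Q_n$ is closed and discrete in $G_n$; the containment $E\subset G_{n+1}$ holds because $Q_n\cap E=\emptyset$ and $E\subset G_n\cap H_{n+1}$; and the property \cref{1 point} holds because any component $J$ of $G_{n+1}$ lies in a single $I\in\mathcal{I}_n$ and in a single gap $(q_{k-1}^I,q_k^I)\subset(x_{k-1},x_{k+1})$, which meets $X_n$ only in $x_k$. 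One tiny point worth noting in a polished write-up: for components of $H_1$ that are already of length at most $1$ no subdivision is actually needed, though performing it does no harm.
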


\begin{proof}[Proof of \cref{g delta theorem}] We first consider the case where $E$ is a $G_\delta$ set with empty interior.  Assume that the sets $G_1,G_2,\ldots$ have been chosen as in \cref{sparse G delta}.  For each $n \in \field{N}$ define
\begin{equation}\label{f_n def}
f_n(x)=\sum_{I\in {\mathcal I}_n} \Psi_{I,n}(x)
\end{equation} and let
\begin{equation*}
f(x)=\sum_{n=1}^\infty f_n(x).
\end{equation*}
It follows from \cref{max x_j+1-x_j,G_n contains G_n+1,1 point,sup I,f_n def}, and the definition of
$\Psi_{I,n}$ that $f$ is continuous, so we are left to show that $\lip f(x)=\infty$ for all $x \in E$ and $\Lip f(x)<\infty$ for all $x \notin E$.

First we show that $\Lip f(x) < \infty$ for all $x \notin E$.  Assume that $x \notin E$.  Then we can choose $K \in \field{N}$ such that $x \notin G_n$ for all $n>K$,  and we write $f=S_K+T_K$,  where $S_K=\sum_{n=1}^K f_n$ and $T_K=\sum_{n=K+1}^\infty f_n$.  Since each $f_n$ is $5^n$\nobreakdash-Lipschitz, it follows that $S_K$ is Lipschitz and therefore $\Lip S_K(x)<\infty$.  Thus, to establish that $\Lip f(x)<\infty$ it suffices to prove the inequality
$\Lip T_K(x)<\infty$.  Now for each $n \in \field{N}$ we define
\begin{equation*}
h_n=\sum_{I \in {\mathcal I}_n}\Phi_{I,n},
\end{equation*}
so we have, by the fact that $\Phi_{I,n}$ bounds $\Psi_{I,n}$ from above as detailed in \cref{psi phi inequality}, that
$0 \le f_n(t) \le h_n(t)$ for all $t \in \field{R}$ and therefore
\begin{equation}\label{T_K inequality}
0 \le T_K(t)\le \sum_{n=K+1}^{\infty}h_n(t)=:R_K(t) \mbox{ for all }t \in \field{R}.
\end{equation}
That $h_n$ is $5^{-n}$\nobreakdash-Lipschitz implies that $R_K$ is Lipschitz.   It therefore follows from \cref{T_K inequality} and the fact that $0=T_K(x)=R_K(x)$ that $\Lip T_K(x)<\infty$ as desired.

It remains to show that $\lip f(x)=\infty$ for all $x \in E$.  Let $x \in E$ and assume without loss of generality that $x=0$.  For each $n \in \field{N}$ we choose $I_n \in {\mathcal I}_n$ such that $0 \in I_n$ and we define $\Psi_n=\Psi_{I_n,n}$.  We next write $f=g+h$ where
\begin{equation*}
g=\sum_{n=1}^\infty \Psi_{n} \mbox{ and } h=\sum_{n=1}^\infty \sum_{I\in{\mathcal I}_n,\,I\neq I_n}\Psi_{I,n}.
\end{equation*}
Using the same argument as above, one can show that $\Lip h(0)<\infty$, and therefore it suffices to prove that $\lip g(0)=\infty$.

For each $n \in \field{N}$ choose $x^n_j,\,x^n_{j+1}\in B_{I_n,n}=:B_n$ so that $0 \in [x^n_j,x^n_{j+1}]$, choose $J_n$ to be the larger of the two intervals $[x^n_j,0]$ and $[0,x^n_{j+1}]$ and define $r_n=|J_n|$.  Note that
\begin{equation}\label{J_n 1/2}
r_n\ge\frac12(x^n_{j+1}-x^n_j)
\end{equation}
and $(\interior{J_n})\cap B_n=\emptyset$. Using these facts along with \cref{max x_j+1-x_j,x_j x_j+1 ratio,1 point} we get
\begin{equation}\label{J_n J_n+1}
2\cdot 5^{2n}r_{n+1}\leq \abs{I_{n+1}}\leq{5r_n}.
\end{equation}

We also note that $\Psi_n$ is linear on $J_n$ with slope $\pm 5^n$. Define $m_n$ by the equality $m_n=\max\{\Psi_n(x_j^n),\Psi_n(x_{j+1}^n)\}$ and note that from (\ref{J_n 1/2}) we get $m_n \le2\cdot 5^n r_n$.  It follows from (\ref{sup Psi_n}) that
\begin{equation}\label{Psi_n domination}
\Psi_n(t)\le 2\cdot 5^n r_n+\frac1{5^n}(|t|+r_n) \text{ for all }t.
\end{equation}
Let $g_k=\sum_{j=1}^k \Psi_j$.  Since each $\Psi_j$ is $5^j$\nobreakdash-Lipschitz, it follows that $g_k$ is $\frac{5^{k+1}}4$\nobreakdash-Lipschitz.
Therefore,  since $g_n=\Psi_n+g_{n-1}$ and $\abs{\Psi_n'}=5^n$ on $J_n$,  we get
\begin{equation}\label{g_n estimate}
\abs{g_n(t)-g_n(s)}\ge \frac34\cdot 5^n\abs{t-s} \mbox{ for all }s,t \in J_n.
\end{equation}

Now we define $[a_n,b_n]=J_n$ and $q(r)=q_g(0,r)$ and further note that ${q(r_n)\ge \frac{|g(b_n)-g(a_n)|}{r_n}}$.

We need to show that $\lim_{r\convFromRight 0} q(r)=\infty$. We begin by showing that ${q(r_n)\to \infty}$ as $n \to \infty$.  From \cref{J_n J_n+1} we get

\begin{equation}\label{Psi_n inequality}
\sup_{t \in \field{R}}\Psi_n(t)\le \frac1{2\cdot 5^n}|I_n|\le\frac1{2\cdot 5^{n-1}}r_{n-1},
\end{equation}
and it follows that
\begin{equation}\label{sum Psi_k}
\sum_{k>n}\Psi_k(t)\le \frac1{5^n}r_n \ \text{for all}\ t \in \field{R}.
\end{equation}

But from \cref{g_n estimate} we know that $|g_n(b_n)-g_n(a_n)|\ge \frac{3}{4}\cdot 5^n r_n$, so it follows that $|g(b_n)-g(a_n)|\ge \frac{1}{2}\cdot 5^n r_n$, and therefore $q(r_n)\ge \frac{1}{2}\cdot 5^n$. This implies that $q(r_n)$ converges to $\infty$.

We now consider the interval $[r_{n+1},r_n]$.  We need to show that $q(r)$ remains large on the entire interval and not just at the endpoints, $r_{n+1}$ and $r_n$.  Note that for any $r>0$ and $s\ge 1$ we have $q(sr)\ge q(r)/s$.

We assume without loss of generality that $a_n=0$ so $J_n=[0,r_n]$.  Then from \cref{g_n estimate} we have
\begin{equation}\label{g_n zero estimate}
|g_n(t)-g_n(0)| \ge \frac34\cdot 5^n t \mbox{ for all } t \in [0,r_n].
\end{equation}
Now, using (\ref{Psi_n domination}), we get
\begin{equation}\label{Psi_n+1 inequality}
\Psi_{n+1}(t)\le \frac{5^n}2t  \mbox{ for all }t \ge 25r_{n+1}.
\end{equation}

Finally,  using \cref{sum Psi_k}, \cref{g_n zero estimate}, and \cref{Psi_n+1 inequality}, we get $|g(t)-g(0)|\ge \frac{1}{5}\cdot5^n t $ on $[25r_{n+1},r_n]$ and therefore $q(r)\ge 5^{n-1}$ on $[25r_{n+1},r_n]$.  It follows that $\lim_{r \convFromRight 0}q(r)=\infty$ so $\lip g(0)=\infty$, as desired.

To complete the proof of \cref{g delta theorem}, we need to remove the assumption that $E$ has empty interior.  Thus we now assume that $E$ is a $G_\delta$ set with non-empty interior.  Let $O=\interior E$ and $E'=E\backslash O$.  Then $E'$ is a $G_\delta$ set and using what we have proved so far we can find a function $h$ such that $\lip h(x)=\infty$ on $E'$ and $\Lip h(x) < \infty$ on $\field{R}\backslash E'$.

In \cref{open set case}, we constructed a continuous function~$g$ satisfying the equality $\lip g(x)=\infty$ on $O$ and the inequality $\Lip g(x)<\infty$ on the complement of $O$.
Setting $f=g+h$ we have the result we need.
\end{proof}

\section{Typical Results}\label{Typical Results}
In this section we consider continuous functions defined on $\field{R}^d$, where $d$ is a positive integer.  Our goal here is to show that the typical continuous function has vanishing $\lip$ at points of a residual set of full measure. We start with some auxiliary results.
\begin{notation}[balls $B_\infty(x,r)$ and $U_\infty(x,r)$ and Lebesgue measure $\abs{E}$]
Let $d\in\field{N}$, choose $x=(x_1,\dots, x_d)\in \field{R}^d$ and $r>0$. We denote by $B_\infty(x,r)$ and $U_\infty(x,r)$ the closed and open balls with respect to the maximum norm on $\field{R}^d$, centered at $x$ and with radius $r$, respectively. We denote the $d$\nobreakdash-dimensional Lebesgue measure of a set $E\subset\field{R}^d$ by $\abs{E}$.
\end{notation}

\begin{theorem}\label{typical function}
The typical function $f\in C({[0,1]}^d)$ satisfies $\lip f(x)=0$ at points of a residual set of full measure in ${[0,1]}^d$.
\end{theorem}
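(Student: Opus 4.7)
My plan is to prove the conclusion by establishing two separate Baire-category statements about the complete metric space $C([0,1]^d)$: (i) for the typical $f$, the set $\{x : \lip f(x) = 0\}$ is residual in $[0,1]^d$; and (ii) for the typical $f$, this set has full Lebesgue measure. Since the intersection of two residual subsets of $C([0,1]^d)$ is residual, the conjunction will yield \cref{typical function}. Throughout, I will use the straightforward $d$-dimensional analogues of \cref{O:propertiesofqlL} and \cref{Baire classes}, and in particular the representation
\[
\{x \in [0,1]^d : \lip f(x) = 0\} = \bigcap_{k, n \in \field{N}} O_{k, n}(f),
\]
where $O_{k, n}(f) \coloneqq \{x \in [0,1]^d : l_f(x, 1/n) < 1/k\}$ is open by upper semi-continuity of $l_f(\cdot, 1/n)$.

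For (i), I fix a countable dense set $D \subseteq [0,1]^d$ and show that for each $x_0 \in D$, the set $\mathcal{A}_{x_0} \coloneqq \{f \in C([0,1]^d) : \lip f(x_0) = 0\}$ is a dense $G_\delta$. Its $G_\delta$ structure follows from
\[
\mathcal{A}_{x_0} = \bigcap_{k, n \in \field{N}} \{f \in C([0,1]^d) : \exists r \in (0, 1/n),\ q_f(x_0, r) < 1/k\},
\]
together with continuity of $f \mapsto q_f(x_0, r)$ for fixed $(x_0, r)$. Density is obtained by a simple perturbation: given $g$ and $\varepsilon > 0$, one can modify $g$ on a small closed ball around $x_0$ to produce $f$ with $\norm{f - g}_\infty < \varepsilon$ that is identically constant on a smaller concentric ball, whence $q_f(x_0, r) = 0$ for every sufficiently small $r$. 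Intersecting the $\mathcal{A}_{x_0}$ over $x_0 \in D$ gives a residual set of $f$ for which $\{x : \lip f(x) = 0\}$ contains $D$ and is therefore dense; being $G_\delta$ as well, it is residual in $[0,1]^d$.

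For (ii), I will show that for each triple $m, k, n \in \field{N}$ the set
\[
\mathcal{O}_{m, k, n} \coloneqq \{f \in C([0,1]^d) : \abs{O_{k, n}(f)} > 1 - 1/m\}
\]
is open and dense. Openness rests on lower semi-continuity of $f \mapsto \abs{O_{k, n}(f)}$: if $x_0 \in O_{k, n}(f)$ with witness $r < 1/n$, then $q_{f'}(x_0, r) < 1/k$ for every $f'$ sufficiently close to $f$, so $O_{k, n}(f) \subseteq \liminf_{j} O_{k, n}(f_j)$ whenever $f_j \to f$ uniformly, and Fatou's lemma promotes this to lower semi-continuity of the measure. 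Density uses an explicit approximation: given $g$ and $\delta > 0$, partition $[0,1]^d$ into small closed cubes $Q_i$ (of side chosen via the uniform continuity of $g$) together with concentric shrunk sub-cubes $Q_i' \subset Q_i$ whose interiors together have measure exceeding $1 - 1/m$, and set $f \coloneqq \sum_i g(c_i) \phi_i$ for a partition of unity $\{\phi_i\}$ arranged so that $\phi_i \equiv 1$ on $Q_i'$ and $\phi_j \equiv 0$ on $Q_i'$ for $j \neq i$. Then $f$ is continuous with $\norm{f - g}_\infty < \delta$ and identically equal to the constant $g(c_i)$ on each $Q_i'$, so $\bigcup_i \interior{Q_i'} \subseteq O_{k, n}(f)$ and thus $f \in \mathcal{O}_{m, k, n}$. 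The intersection $\bigcap_{m, k, n} \mathcal{O}_{m, k, n}$ is then a dense $G_\delta$ on which $\abs{O_{k, n}(f)} = 1$ for every $k, n$, and since the countable union of null sets $[0,1]^d \setminus O_{k, n}(f)$ is still null, $\abs{\{x : \lip f(x) = 0\}} = 1$.

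I anticipate the main technical hurdle to lie in calibrating the partition-of-unity construction in~(ii): one must simultaneously control $\norm{f - g}_\infty$ through the uniform continuity modulus of $g$, arrange the partition of unity so that only $\phi_i$ is non-zero on its own shrunk sub-cube $Q_i'$ (ensuring that $f$ is genuinely locally constant there), and ensure $\sum_i \abs{Q_i'} > 1 - 1/m$. By contrast, the lower semi-continuity of $f \mapsto \abs{O_{k, n}(f)}$ underpinning openness of $\mathcal{O}_{m, k, n}$ is a soft input that lifts the single explicit construction to a genuine open Baire-category condition and thereby closes the argument.
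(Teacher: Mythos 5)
Your proof is correct, but it takes a genuinely different route from the paper's. The paper works with a single family of explicit "combinatorial" conditions: it fixes calibration constants $\alpha_n=\sqrt[d]{1-2^{-n}}$ and $\beta_n=(1-\alpha_n)/n$, defines $A_n$ as the set of $f$ admitting a collection of pairwise-disjoint small cubes of total measure $>1-2^{-n}$ on which $f$ is within $\beta_n\cdot(\text{side})$ of a constant, shows each $A_n$ is open and dense (density via a Tietze extension plus a truncation step), and then shows directly that for $f\in\bigcap A_n$ the set of shrunken-cube centres has full measure and $\lip f$ vanishes there. You instead work with the abstract sets $O_{k,n}(f)=\{x: l_f(x,1/n)<1/k\}$ and their Lebesgue measures, and the key soft lemma is that $f\mapsto |O_{k,n}(f)|$ is lower semi-continuous on $C([0,1]^d)$, which you obtain from the inclusion $O_{k,n}(f)\subseteq\liminf_j O_{k,n}(f_j)$ together with Fatou's lemma; this yields openness of $\mathcal{O}_{m,k,n}$ without any calibration of constants. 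Your density argument (make $f$ locally constant on shrunken subcubes using a partition of unity) plays the same role as the paper's Tietze-plus-truncation step, and indeed both constructions require the same kind of bookkeeping to keep $\|f-g\|_\infty$ small while leaving a full-measure union of cubes on which $f$ is constant. Your Fatou-based openness argument is a nice simplification compared to the paper's explicit $\gamma$-stability estimate. One redundancy worth flagging: your part~(i) is superfluous. As the paper notes (and as your own $\bigcap_{k,n}O_{k,n}(f)$ representation shows), the set $\{x:\lip f(x)=0\}$ is automatically a $G_\delta$ subset of $[0,1]^d$ for \emph{every} continuous $f$, so once part~(ii) delivers full measure, density and hence residuality of that set are immediate; you do not need a separate dense $G_\delta$ in $C([0,1]^d)$ built from the sets $\mathcal{A}_{x_0}$.
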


\begin{proof}
We aim to find a residual set of functions in $C({[0,1]}^d)$ such that each member~$f$ satisfies $\lip f(x)=0$ at all points of a set of full measure (depending on~$f$) in ${[0,1]}^d$; the residuality part then follows automatically, as for any continuous function $f$, the set of points $x$ with $\lip f(x)=0$ is of the type $G_\delta$ by a simple multidimensional
generalization of
\cref{Baire classes}
(and, of course, full measure implies density).

First, for each $n \in \field{N}$ set
\begin{equation}\label{alpha and beta}
\alpha_n:=\sqrt[d]{1-2^{-n}}, \quad \beta_n:= \frac{1-\alpha_n}{n},
\end{equation}
and define ${\mathcal C}_n$ as the set of all finite sequences ${(C_i)}_{i=1}^k$ (where $k$ can be any natural number) of pairwise disjoint closed cubes in ${[0,1]}^d$ (i.e.\ closed balls in the maximum norm on $\field{R}^d$) such that for each $i\in\{1,\dots, k\}$ the side length $l(C_i)$ of $C_i$ is less than $1/n$, and $\abs{ \bigcup_{i=1}^k C_i} > 1-2^{-n}$.
Finally, for each $n\in\field{N}$ we define
\begin{multline*}
A_n := \big\{ f\in C({[0,1]}^d) : \exists\{C_i\}_{i=1}^k \in \mathcal{C}_n \quad \exists \{a_i\}_{i=1}^k\subset \field{R} \\ \forall i\in \{1,\dots,k\}, \quad \forall x\in C_i :  \abs{f(x)-a_i} < \beta_n l(C_i)\big\}.
\end{multline*}

The proof will be finished when we prove the following two claims:
\begin{alist}
\item\label{Intersection is residual} The intersection $A:= \bigcap_{n=1}^\infty A_n$ is residual in $C({[0,1]}^d)$; in fact, $A_n$ is open and dense for each $n$.
\item\label{lip zero almost everywhere} If $f\in A$, then $\lip f(x)=0$ for almost every $x\in {[0,1]}^d$.
\end{alist}

To prove claim~\cref{Intersection is residual}, we first observe that $A_n$ is open for any $n$; to that end, fix $n$ and $f\in A_n$. Take $\{C_i\}_{i=1}^k\in {\mathcal C}_n$ and $\{a_i\}_{i=1}^k\subset \field{R}$ witnessing the fact that $f\in A_n$. Thus we have for each $i\in\{1,\dots,k\}$ and each $x\in C_i$ that
\begin{equation*}
\abs{f(x)-a_i}< \beta_n l(C_i).
\end{equation*}
But the finitely many cubes $C_i$ are compact, so there exists $\gamma>0$ (depending only on $\{C_i\}_{i=1}^k$) such that
\begin{equation*}
\abs{f(x)-a_i}< \beta_n l(C_i) -\gamma
\end{equation*}
for each $x\in C_i$. Now, if $g\in C({[0,1]}^d)$ is such that $\|f-g\|_\infty < \gamma$, then for each $i$ and each $x\in C_i$ we have
\begin{equation*}
\abs{g(x)-a_i}\leq \abs{g(x)-f(x)} + \abs{f(x)-a_i} < \gamma + \beta_n l(C_i) - \gamma = \beta_n l(C_i).
\end{equation*}
Thus, $g\in A_n$, and $A_n$ is open.

To prove the density of $A_n$ in $C({[0,1]}^d)$, let there be given an arbitrary function \mbox{$f\in C({[0,1]}^d)$} and $\varepsilon>0$; we want to find a function $g\in A_n$ such that $\|f-g\|_\infty < \varepsilon$. From the uniform continuity of $f$ we obtain a $\delta>0$ such that for each $x\in {[0,1]}^d$ and each $y \in B_\infty(x,\delta)\cap{[0,1]}^d$ we have $\abs{f(x)-f(y)} < \varepsilon/2$. Next, let us find $\{x_i\}_{i=1}^k \subset {[0,1]}^d$ and $\{r_i\}_{i=1}^k \subset (0,\delta)$ such that $\{B_\infty(x_i,r_i)\}_{i=1}^k \in {\mathcal C}_n$ (clearly we can do that; recall that $B_\infty$ denotes the closed ball in the maximum norm), and take a number $\gamma\in (0,1)$ so close to $1$ that also $\{B_\infty(x_i,\gamma r_i)\}_{i=1}^k \in {\mathcal C}_n$. Define
\begin{equation*}
\tilde{g}(x) =
\begin{cases}
f(x_i) \quad & \text{if } x\in B_\infty(x_i, \gamma r_i), \\
f(x) \quad & \text{if } x\in {[0,1]}^d \setminus \bigcup_{i=1}^k U_\infty (x_i, r_i).
\end{cases}
\end{equation*}
Hence $\tilde{g}$ is clearly continuous on the closed subspace
\begin{equation*}
\bigcup_{i=1}^k B_\infty(x_i, \gamma r_i) \cup \left( {[0,1]}^d \setminus \bigcup_{i=1}^k U_\infty (x_i, r_i)\right),
\end{equation*}
and we can use Tietze's Theorem to continuously extend $\tilde{g}$ to the whole ${[0,1]}^d$. We denote the extension by $\tilde{g}$ as well.

However, the statement of Tietze's Theorem gives us no control on the distance between $\tilde{g}$ and $f$, and so we need to perform a simple truncation procedure to ensure $g$ will indeed be close to $f$. We define $g\colon {[0,1]}^d \to \field{R}$ as
\begin{equation*}
g(x)=
\begin{cases}
\min \left\{ \max \left\{ \tilde{g}(x), f(x_i)-\frac{\varepsilon}{2} \right\}, f(x_i)+\frac{\varepsilon}{2} \right\} \quad &\text{if } x\in B_\infty(x_i,r_i), \\
\tilde{g}(x) \quad &\text{otherwise}.
\end{cases}
\end{equation*}
To see that $g$ is continuous, take any $i\in\{1,\dots, k\}$ and observe that for each \mbox{$x\in \partial B_\infty(x_i,r_i)$}, we have $\tilde{g}(x)=f(x) \in (f(x_i)-\varepsilon/2, f(x_i)+ \varepsilon/2)$. Therefore, the truncation can only change the function $f$ in the interior of the cubes, so the continuity is preserved.

To see that $\|f-g\|_\infty < \varepsilon$, take any $x \in {[0,1]}^d$. As $g$ coincides with $f$ outside $\bigcup_{i=1}^k B_\infty(x_i, r_i)$, we can assume that $x\in B_\infty (x_i,r_i)$ for some $i$. But then $f(x)\in ( f(x_i)- \varepsilon/2, f(x_i)+ \varepsilon/2)$, and $g(x) \in [ f(x_i)-\varepsilon/2, f(x_i)+\varepsilon/2]$, whence $\abs{f(x)-g(x)} <\varepsilon$. This shows that $A_n$ is dense, and the proof of claim~\cref{Intersection is residual} is complete.

To prove claim~\cref{lip zero almost everywhere}, take any function $f\in \bigcap_{n=1}^\infty A_n$. Our goal is to prove that for almost every $x\in {[0,1]}^d$ we have $\lip f(x)=0$. Since $f\in A_n$ for every $n\in \field{N}$, for each $n$  we can choose
sequences of cubes $\{C_i^{(n)}\}_{i=1}^{k_n}$   in $\mathcal{C}_n$ and of points $\{a_i^{(n)}\}_{i=1}^{k_n}$ such that for all $i\in \{1,\dots, k_n\}$ and all $x\in C_i^{(n)}$ we have
\begin{equation}\label{E:equation1}
\abs{f(x) - a_i^{(n)}} < \beta_n l \left(C_i^{(n)}\right).
\end{equation}
For a positive number $\alpha$ and a cube $C$, we denote by $\alpha C$ the cube that has the same centre as $C$ and satisfies $l(\alpha C)= \alpha \cdot l(C)$. Recalling the definition in \cref{alpha and beta}, we set
\begin{equation*}
Z= \bigcup_{m=1}^\infty \bigcap_{n=m}^\infty \bigcup_{i=1}^{k_n} \alpha_n  C_i^{(n)}.
\end{equation*}
First we observe that $\abs{Z}=1$,  that is the set~$Z$ has full measure in ${[0,1]}^d$. Indeed, for a fixed $n$ we have $\abs{\alpha_n C_i^{(n)}} = \alpha_n^d \abs{C_i^{(n)}}$. As $\alpha_n<1$, the cubes $\alpha_n C_i^{(n)}$ are pairwise disjoint. Therefore,
\begin{align*}
\abs{\bigcup_{i=1}^{k_n} \alpha_n C_i^{(n)}} & = \alpha_n^d \abs{\bigcup_{i=1}^{k_n} C_i^{(n)}} > \alpha_n^d \left( 1 - 2^{-n}\right) \\
& = {\left( 1 - 2^{-n}\right)}^2 > 1-2\cdot 2^{-n}.
\end{align*}
Hence, for a fixed $m\in\field{N}$ we obtain that
\begin{equation*}
\abs{ \bigcap_{n=m}^\infty \bigcup_{i=1}^{k_n} \alpha_n  C_i^{(n)}} > 1-\sum_{n=m}^\infty 2\cdot 2^{-n} = 1- 2^{2-m},
\end{equation*}
and the right-hand side tends to $1$ as $m\to \infty$, which implies the desired conclusion $\abs{Z}=1$.

Finally, we take an arbitrary $x\in Z$; we want to prove that $\lip f(x)=0$. Since $x\in Z$ means that there exists an $m\in\field{N}$ such that for each $n\geq m$ there is an index $i_n\in \{1,\dots, k_n\}$ such that $x\in \alpha_n  C_{i_n}^{(n)}\subset C_{i_n}^{(n)}$.  For any such $n\geq m$, we have from \cref{E:equation1} that for each $y\in C_{i_n}^{(n)}$,
\begin{equation*}
\abs{f(y)-a_{i_n}^{(n)}} < \beta_n l\left(C_{i_n}^{(n)}\right).
\end{equation*}
In particular, since $x\in \alpha_n  C_{i_n}^{(n)}$, we have this for each $y\in B_\infty(x,r_n)$ where
\begin{equation*}
r_n=(1-\alpha_n)\cdot l\left(C_{i_n}^{(n)}\right),
\end{equation*}
and it follows that
\begin{equation*}
\sup_{y\in B_\infty(x,r_n)} \frac{\abs{f(y)-f(x)}}{r_n} \leq \frac{2\beta_n l\left(C_{i_n}^{(n)}\right) }{r_n} = \frac{2\beta_n}{1-\alpha_n}=\frac{2}{n}.
\end{equation*}
As
\begin{equation*}
r_n<l\left(C_{i_n}^{(n)}\right)<\frac{1}{n},
\end{equation*}
$r_n$ tends to~$0$.
It follows that
\begin{equation*}
\liminf_{n\to \infty} \sup_{y\in B_\infty (x,r_n)} \frac{\abs{f(y)-f(x)}}{r_n}=0,
\end{equation*}
which concludes the proof.
\end{proof}

\begin{remark}
In \cite{Zoltan}, the first author studied micro tangent sets of functions defined on the interval~$[0,1]$. Theorem~5 in his paper leads to an alternative
proof of \cref{typical function}
for $d=1$.
\end{remark}

\begin{remark}
One might wonder if it is even true that the typical function has a vanishing $\lip$ everywhere.
However, Lemma~1.1 in \cite{BaloghCsornyei} implies that if $\lip$ of a function vanishes outside a countable set, then the function is differentiable almost everywhere. Thus, for the typical function $f$, there is an uncountable set where $\lip f(x)$ differs from~$0$.
\end{remark}

\subsection*{Acknowledgements}
The first listed author was supported by the Hungarian National Research, Development and Innovation Office--NKFIH, Grant 124749.
Moreover, the research leading to these results has received funding from the European Research Council under the European Union's
Seventh Framework Programme (FP/2007-2013) / ERC Grant
Agreement n.~291497 while the third and fourth authors were research fellows at the University of Warwick.
The finishing process was supported
by the University of Silesia Mathematics Department (Iterative
Functional Equations and Real Analysis program).

\bibliographystyle{siam}
\bibliography{lip}

\begin{thebibliography}{1}

\bibitem{BaloghCsornyei}
{\sc Z.~M. Balogh and M.~Cs{\"o}rnyei}, {\em Scaled-oscillation and
  regularity}, Proc. Amer. Math. Soc., 134 (2006), pp.~2667--2675 (electronic).

\bibitem{Banach}
{\sc S.~{Banach}}, {\em {\"Uber die Baire'sche Kategorie gewisser
  Funktionenmengen.}}, {Stud. Math.}, 3 (1931), pp.~174--179.

\bibitem{Zoltan}
{\sc Z.~Buczolich}, {\em Micro tangent sets of continuous functions}, Math.
  Bohem., 128 (2003), pp.~147--167.

\bibitem{Cheeger}
{\sc J.~Cheeger}, {\em Differentiability of {L}ipschitz functions on metric
  measure spaces}, Geom. Funct. Anal., 9 (1999), pp.~428--517.

\bibitem{Hanson}
{\sc B.~Hanson}, {\em Linear dilatation and differentiability of homeomorphisms
  of {$\mathbb{R}\sp n$}}, Proc. Amer. Math. Soc., 140 (2012), pp.~3541--3547.

\bibitem{Keith}
{\sc S.~Keith}, {\em A differentiable structure for metric measure spaces},
  Adv. Math., 183 (2004), pp.~271--315.

\bibitem{Stepanov}
{\sc W.~Stepanoff}, {\em \"{U}ber totale {D}ifferenzierbarkeit}, Math. Ann., 90
  (1923), pp.~318--320.

\end{thebibliography}

\end{document}